\numberwithin{equation}{section}
\DeclareRobustCommand*{\bfseries}{%
  \not@math@alphabet\bfseries\mathbf
  \fontseries\bfdefault\selectfont
  \boldmath
}
\def\wto{\rightharpoonup}
\def\swto{\stackrel{*}{\wto}}
\def\RR{\mathbb{R}}
\def\MM{\mathbb{M}}
\def\NN{\mathbb{N}}
\def\Z{{\mathcal Z}}
\def\H{{\mathcal H}}
\def\Q{\mathcal Q}
\def\h{{W}}
\def\diam{{\rm diam}}
\def\dom{{\rm dom}}
\def\dist{{\rm dist}}
\def\C{{\mathnormal C}}
\def\inte{{\rm int}}
\def\lsp{{\mathrm{l.s.p.}}}
\def\Aff{{\rm Aff}}
\def\eps{\varepsilon}
\newtheorem{theorem}{Theorem}[section]
\newtheorem{lemma}{Lemma}[section]
\newtheorem{proposition}{Proposition}[section]
\newtheorem{corollary}{Corollary}[section]
\newtheorem{definition}{Definition}[section]
\theoremstyle{remark}
\newtheorem{remark}{Remark}[section]
 \numberwithin{equation}{section}
\title[Homogenization of singular integrals in $W^{1,\infty}$]{Homogenization of singular integrals  in ${W}^{1,\infty}$}
\author[Omar Anza Hafsa \& Jean-Philippe Mandallena]{Omar Anza Hafsa \& Jean-Philippe Mandallena}
\address{UNIVERSITE MONTPELLIER II, UMR-CNRS 5508, Place Eug\`ene Bataillon, 34095 Montpellier, France.}
\email{Omar.Anza-Hafsa@univ-montp2.fr}
\address{UNIVERSITE DE NIMES, Site des Carmes, Place Gabriel P\'eri, 30021 N\^\i mes, France.}
\email{jean-philippe.mandallena@unimes.fr}
\keywords{Homogenization, $\Gamma$-convergence, singular integrand, determinant constraints type, nonlinear elasticity}
\begin{document}
\begin{abstract} A periodic homogenization result of nonconvex integral functionals in the vectorial case with convex bounded constraints on gradients is proved. The class of integrands considered have singular behavior near the boundary of the convex set of the constraints. We apply the result to the case of periodic homogenization in hyperelasticity for bounded gradients of deformations.
\end{abstract}
\maketitle
\section{Introduction and main results} Let $m,d\ge 1$ be two integers. Let $\Omega\subset\RR^d$ be a nonempty open bounded domain with Lipschitz boundary. We consider the homogenization problem of integral functionals $I_\eps: W^{1,\infty}(\Omega;\RR^m)\to [0,+\infty]$ given by
\begin{equation}\label{I_eps}
I_\eps(u)=\int_\Omega \h\left(\frac{x}{\eps},\nabla u(x)\right)dx 
\end{equation}
where $\eps>0$ is a (small) parameter. The integrand $\h:\RR^d\times\MM^{m\times d}\to [0,+\infty]$ is Borel measurable, $Y$-periodic with respect to its first variable with $Y=]0,1[^d$, with $\MM^{m\times d}$ denotes the set of $m\times d$ matrices. For any $x\in\RR^d$, we denote by $\dom\h(x,\cdot)$ the effective domain of $\h(x,\cdot)$, i.e., $\dom\h(x,\cdot)=\{\xi\in\MM^{m\times d}:\h(x,\xi)<+\infty\}$. We are interested in integrands satisfying $\dom\h(x,\cdot)\subset\overline{\C}$ for all $x\in\RR^d$, where $\overline{\C}$ is the closure of a convex bounded set with nonempty interior. Since the boundedness of $\C$, the sequential weak$^\ast$ convergence in $W^{1,\infty}(\Omega;\RR^m)$ is the natural convergence for the homogenization of integrals \eqref{I_eps}.

For scalar problems (when $\min\{d,m\}=1$), periodic homogenization problems with convex constraints were studied by several authors, see for instance \cite{carbone-dearcangelis02}. The classical periodic homogenization results for integrals \eqref{I_eps} in the vectorial  case (when $\min\{d,m\}\ge 2$) require polynomial growth conditions on the integrands, which do not allow us to deal with constraints on gradients. However in a recent paper \cite{oah-jpm-Leghmizi09}, motivated by taking gradients constraints arising in hyperelasticity into account, we proved a homogenization result with constraints of type $\det\nabla u\not=0$. With the same motivations and drawing conditions on the determinant of gradients in hyperelasticity, we consider assumptions (see Subsection \ref{mresults}) which allow singular behavior of $\h$ near the boundary $\partial\C$ of $\C$, of type
\[
\lim_{\xi\to \partial\C}\h(\cdot,\xi)=+\infty.
\]
In this perspective we give in Section \ref{sec5} an application to homogenization problems satisfying some natural conditions on determinant of gradients.
\subsection{Main results}\label{mresults} Let $\C\subset\MM^{m\times d}$ be a bounded convex set with nonempty interior. To simplify our statements we assume throughout the paper that $0\in\inte\C$, where $\inte\C$ is the interior of $\C$. Let $\h:\RR^d\times\MM^{m\times d}\to [0,+\infty]$ be a Borel measurable function $Y$-periodic with respect to its first variable, such that $\dom\h(x,\cdot)\subset\overline{\C}$ for all $x\in\RR^d$. We consider the following assertions:
\begin{enumerate}
\item[(H$_1$)]\label{H1} $\h$ is radially uniformly upper semicontinuous on its domain, i.e., for every $\eps>0$ there exists $\eta>0$ such that for every $x\in \RR^d$, every $\xi\in\dom\h(x,\cdot)$ and every $t\in [0,1[$
\[
1-t\le\eta\implies\h(x,t\xi)\le\h(x,\xi)+\eps;
\]
\item[(H$_2$)]\label{H2} $\h$ is locally bounded in $\inte\C$, i.e., $\sup\{\h(\cdot,\xi):\xi\in K\}\in L^\infty_{\rm loc}(\RR^d)$ for all compact sets $K\subset \inte\C$;
\item[(H$_3$)]\label{H3} $\h$ is singular at $\partial\C$, i.e., for every $s>0$ there exists a compact set $K_s\subset\inte\C$ such that for every $x\in\RR^d$
\[
\inf\left\{\h(x,\xi):\xi\in \overline{\C}\setminus K_s\right\}\ge s.
\]
\end{enumerate}
\begin{remark}
If (H$_2$) holds then $\inte\C\subset\dom\h(x,\cdot)\subset\overline{\C}$ a.e. in $\RR^d$. If (H$_2$) and H$_3$) hold then $\dom\h(x,\cdot)=\inte\C$ a.e. in $\RR^d$.
\end{remark}

We state some properties of convex sets.\\
\noindent {\em Line segment principle {\rm (}l.s.p.{\rm )}} \cite[Theorem 2.33]{rockafellar-wets98}:  Let $\C\subset\MM^{m\times d}$ be a bounded convex set with $0\in\inte\C$. Then
\[
 \inte{\overline{\C}}=\inte{\C}, \quad\overline{\inte\C}=\overline{\C},\; \hbox{ and } \;t \overline{\C}\subset \inte\C\hbox{ for all }t\in [0,1[.
\]

We say that {\em $\{I_\eps\}_{\eps>0}$ $\Gamma$-converges to $I_{\rm hom}:W^{1,\infty}(\Omega;\RR^m)\to [0,+\infty]$ with respect to $L^1(\Omega;\RR^m)$-convergence as $\eps\to 0$}, if for every $u\in W^{1,\infty}(\Omega;\RR^m)$
\[
\Gamma\mbox{-}\liminf_{\eps\to 0} I_{\eps}(u)\ge I_{\rm hom}(u)\quad\mbox{ and }\quad\Gamma\mbox{-}\limsup_{\eps\to 0} I_{\eps}(u)\le I_{\rm hom}(u),
\]
where
\begin{align*}
\Gamma\mbox{-}\liminf_{\eps\to 0} I_{\eps}(u)&=\inf\left\{\liminf_{\eps\to 0}I_{\eps}(u_\eps):u_\eps\to u \mbox{ in }L^1\right\},\\
\Gamma\mbox{-}\limsup_{\eps\to 0} I_{\eps}(u)&=\inf\left\{\limsup_{\eps\to 0}I_{\eps}(u_\eps):u_\eps\to u \mbox{ in }L^1\right\}.
\end{align*}

Define $\H\h:\MM^{m\times d}\to [0,+\infty]$ by
\[
\H\h(\xi)=\inf_{n\ge 1}\frac{1}{n^d}\inf\left\{\int_{nY} \h(x,\xi+\nabla\phi(x))dx:\phi\in W^{1,\infty}_0(nY;\RR^m)\right\}.
\]
Here is the main result of the paper. 
\begin{theorem}\label{main result 1} Assume that {\rm (${\rm H}_1$)}, {\rm (${\rm H}_2$)} and {\rm (${\rm H}_3$)} hold. Then $\{I_\eps\}_{\eps>0}$ $\Gamma$-converges to $I_{\rm hom}$ with respect to $L^1(\Omega;\RR^m)$-convergence as $\eps\to 0$, where for every $u\in W^{1,\infty}(\Omega;\RR^m)$
\begin{equation*}
I_{\rm hom}(u)=\int_\Omega\h_{\rm hom}(\nabla u(x))dx 
\end{equation*}
and
\[
\h_{\rm hom}=\overline{\H\h},
\]
where $\overline{\H\h}$ is the lower semicontinuous envelope of $\H\h$.
\end{theorem}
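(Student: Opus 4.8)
The plan is to establish the two $\Gamma$-convergence inequalities separately, and the key technical device throughout will be the line segment principle, which lets us dilate gradients by $t \in [0,1[$ to push them into $\inte\C$, combined with the radial upper semicontinuity (H$_1$) to control the resulting change in energy. Before doing either inequality I would record the basic structural facts: by (H$_2$) and (H$_3$), $\dom\h(x,\cdot) = \inte\C$ a.e., and a Lipschitz function $u$ with $\nabla u \in \inte\C$ a.e. has finite energy on the relevant scales; also $\H\h$ (hence $\h_{\rm hom} = \overline{\H\h}$) is finite exactly on an appropriate subset related to $\inte\C$, and $\h_{\rm hom}$ is convex-like enough (rank-one convexity/quasiconvexity of the homogenized cell formula) to ensure that $I_{\rm hom}$ is $L^1$-lower semicontinuous. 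I would also note that one may reduce to affine target functions $u$ by a standard blow-up/localization argument, since $\Gamma$-limits here are integral functionals.

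For the $\Gamma$-$\liminf$ inequality I would proceed by the blow-up method: given $u_\eps \to u$ in $L^1$ with $\liminf I_\eps(u_\eps) < \infty$, extract a subsequence realizing the liminf, pass to the Young measure or use the localization of the measures $\h(x/\eps,\nabla u_\eps)\,dx$, and at a.e. point $x_0 \in \Omega$ show the density of the limit measure is $\ge \h_{\rm hom}(\nabla u(x_0))$. The core estimate is that for affine boundary data with slope $\xi$, the rescaled cell problems $\frac{1}{n^d}\inf\{\int_{nY}\h(x,\xi+\nabla\phi)\}$ converge (by subadditivity, a Fatou-type argument, and the definition of $\H\h$ as the infimum over $n$) and that one cannot beat $\H\h(\xi)$, hence not $\overline{\H\h}(\xi)$ after taking lower semicontinuous envelope for free from the $L^1$-lsc of the $\Gamma$-liminf. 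Here (H$_3$) is what keeps competitors' gradients from escaping to $\partial\C$ where a naive truncation would fail.

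For the $\Gamma$-$\limsup$ inequality, the recovery sequence is built in two stages. First, for an affine function $u$ with slope $\xi \in \inte\C$ and fixed $n$, take a near-optimal $\phi \in W^{1,\infty}_0(nY;\RR^m)$ for the $n$-th cell problem, periodize it at scale $\eps n$, and define $u_\eps(x) = u(x) + \eps n\,\phi(x/(\eps n))$; a Riemann-sum / periodicity computation gives $\limsup_\eps I_\eps(u_\eps) \le \frac{1}{n^d}\int_{nY}\h(x,\xi+\nabla\phi(x))\,dx$, and then infimizing over $n$ and the choice of $\phi$ yields $\H\h(\xi)$. The difficulty is that $\xi + \nabla\phi$ need not stay inside $\inte\C$, so this construction must be run not with $\phi$ directly but with a dilated competitor: replace $\xi$ by $t\xi$ and $\phi$ by $t\phi$ (using $t\overline{\C}\subset\inte\C$ from the l.s.p. so the gradient lies in a compact subset of $\inte\C$ where (H$_2$) gives $L^\infty$ bounds), obtain the estimate with $\h(x, t(\xi+\nabla\phi))$, then let $t \uparrow 1$ and invoke (H$_1$) to recover $\H\h(\xi)$ up to $\eps$. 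Second, to pass from affine $u$ to general $u \in W^{1,\infty}$, approximate $u$ by piecewise affine maps whose gradients are uniformly in a compact subset of $\inte\C$ (again via dilation $u \mapsto tu$ and l.s.p., then (H$_1$)), glue the affine recovery sequences on the pieces using cutoff functions—controlling the energy on the thin transition layers by the $L^\infty$ bound from (H$_2$)—and finally pass to the lower semicontinuous envelope: $\Gamma$-$\limsup$ is automatically $L^1$-lsc, so it is bounded above by $\overline{\H\h(\nabla u)}$ integrated, i.e.\ by $I_{\rm hom}(u)$.

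The main obstacle is the interaction between the singularity hypothesis (H$_3$) and the constructions: every approximation step (truncation of competitors in the liminf, periodization and piecewise-affine gluing in the limsup) naively risks producing gradients on or outside $\partial\C$, where $\h = +\infty$. The dilation trick $\xi \mapsto t\xi$, $t\uparrow 1$—legitimate precisely because $0 \in \inte\C$ and the line segment principle gives $t\overline{\C}\subset\inte\C$—together with (H$_1$) to absorb the error and (H$_2$) to get uniform bounds on compact subsets of $\inte\C$, is the mechanism that makes all of this work, and verifying that it can be inserted into each step without circularity is where the real effort lies.
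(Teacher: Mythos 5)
Your outline reproduces the general architecture (blow-up for the lower bound; periodization of cell competitors, dilation $t\uparrow 1$ via the line segment principle and (H$_1$), and piecewise-affine approximation for the upper bound), but it omits the device on which the paper's proof actually rests, and the two places where you wave it away are exactly where your argument breaks. First, the blow-up lower bound: after extracting the limit measure you must glue the given sequence $u_\eps$ to the affine blow-up limit with a cut-off, and on the transition layer the gradient of the glued function contains the term $\nabla\phi_\delta\otimes(u_\eps-u_0)$, which in general leaves $\overline{\C}$; there $\h=+\infty$ and the comparison with the cell formula collapses. Saying that (H$_3$) ``keeps competitors' gradients from escaping to $\partial\C$'' is not a mechanism: (H$_3$) constrains where $\h$ is finite, but the glued competitor is constructed by you, and its layer gradients are not controlled by the energy bound. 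The paper resolves this by never running the blow-up on $\h$ at all: it constructs (Theorem \ref{main result 2}, proved in Section \ref{lineseg} via M\"uller's sharp version of Zhang's truncation theorem, Lemma \ref{Muller}) a nondecreasing sequence of $Y$-periodic integrands $\h_n\le\h$ of \emph{linear growth}, applies the blow-up bound of Theorem \ref{BraidesTheorem} to each $\h_n$ (linear growth is what makes the cut-off cost vanish), and then uses the nontrivial identity $\sup_n\H\h_n=\overline{\H\h}$ together with monotone convergence. Nothing in your proposal substitutes for this construction.

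Second, on both sides you claim the passage from $\H\h$ (respectively $\Z\H\h$) to the lower semicontinuous envelope $\overline{\H\h}$ is ``for free'' from the $L^1$-lower semicontinuity of the $\Gamma$-liminf/limsup. That is not so: $L^1$-lower semicontinuity of the $\Gamma$-limits is a property of the functionals in $u$, and it does not convert a bound by $\int_\Omega\H\h(\nabla u)\,dx$ into a bound by $\int_\Omega\overline{\H\h}(\nabla u)\,dx$, nor does it identify the relaxed integrand of the upper-bound construction with $\overline{\H\h}$. Matching the two bounds requires the identification $\overline{\H\h}=\Q\H\h=\widehat{\Z\H\h}$, i.e.\ item \ref{mainresult24} of Theorem \ref{main result 2}, whose proof again goes through the truncation argument, the dilation $t_{\sigma(k)}\tau_k\xi$, (H$_1$)--(H$_3$) and the radial continuity of the extension $\widehat{\Z\H\h}$ (Corollary \ref{exist lim}, Proposition \ref{extension}). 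Your dilation trick is indeed the right tool for the recovery-sequence side (it is the content of Proposition \ref{upper bound prop} and Lemma \ref{strong envelope}), but without the approximation-from-below by linear-growth integrands and the identification of the envelopes, neither the lower bound nor the equality of the two $\Gamma$-bounds is established; this is the real gap.
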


We have also a version of Theorem \ref{main result 1} with Dirichlet boundary conditions. Given any bounded open set $D\subset\RR^d$ with $|\partial D|=0$, we set $W^{1,\infty}_0(D;\RR^m)=\{\varphi\in W^{1,\infty}(D;\RR^m):\varphi=0\hbox{ on }\partial D\}$. Define $J_\eps: W^{1,\infty}(\Omega;\RR^m)\to [0,+\infty]$ by
\[
J_\eps(u)=\left\{\begin{array}{cl}
\displaystyle\int_\Omega \h\left(\frac{x}{\eps},\nabla u(x)\right)dx &\hbox{ if }u\in W^{1,\infty}_0(\Omega;\RR^m)\\
+\infty & \hbox{ if }u\in W^{1,\infty}(\Omega;\RR^m)\setminus W^{1,\infty}_0(\Omega;\RR^m).
\end{array}
\right.
\]

\begin{theorem}[Dirichlet boundary conditions]\label{main result 1bis} Assume that {\rm (${\rm H}_1$)}, {\rm (${\rm H}_2$)} and {\rm (${\rm H}_3$)} hold. Then $\{J_\eps\}_{\eps>0}$ $\Gamma$-converges to $J_{\rm hom}$ with respect to $L^1(\Omega;\RR^m)$-convergence as $\eps\to 0$, where
\[
J_{\rm hom}(u)=\left\{
\begin{array}{cl}
  \displaystyle\int_\Omega\h_{\rm hom}(\nabla u(x))dx & \hbox{ if }\;u\in W^{1,\infty}_0(\Omega;\RR^m)\\
  +\infty & \hbox{ if }\; u\in W^{1,\infty}(\Omega;\RR^m)\setminus W^{1,\infty}_0(\Omega;\RR^m)
\end{array}
\right.
\]
and
$
\h_{\rm hom}=\overline{\H\h}.
$
\end{theorem}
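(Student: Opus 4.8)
The strategy is to deduce Theorem~\ref{main result 1bis} from Theorem~\ref{main result 1} by a boundary-layer (De~Giorgi type) argument, the only delicate point being to keep the gradients of the modified functions inside $\overline{\C}$ — in fact inside a compact subset of $\inte\C$ — where $\h$ is finite and, by (H$_2$), bounded.

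\emph{Lower bound.} Since $J_\eps\ge I_\eps$ pointwise, for every $u_\eps\to u$ in $L^1$ one has $\liminf_\eps J_\eps(u_\eps)\ge\liminf_\eps I_\eps(u_\eps)\ge\Gamma\mbox{-}\liminf_\eps I_\eps(u)\ge I_{\rm hom}(u)$ by Theorem~\ref{main result 1}; hence, when $u\in W^{1,\infty}_0(\Omega;\RR^m)$, $\Gamma\mbox{-}\liminf_\eps J_\eps(u)\ge I_{\rm hom}(u)=J_{\rm hom}(u)$. If $u\notin W^{1,\infty}_0(\Omega;\RR^m)$ and some $u_\eps\to u$ in $L^1$ had $\liminf_\eps J_\eps(u_\eps)<+\infty$, then along a subsequence $u_\eps\in W^{1,\infty}_0(\Omega;\RR^m)$ with $\nabla u_\eps\in\overline{\C}$ a.e.; these $u_\eps$ would be bounded in $W^{1,\infty}(\Omega;\RR^m)$, hence (up to a further subsequence, by the compact embedding into $C(\overline\Omega;\RR^m)$) converge uniformly on $\overline\Omega$ to $u$, forcing $u\in W^{1,\infty}_0(\Omega;\RR^m)$ — a contradiction; so $\Gamma\mbox{-}\liminf_\eps J_\eps(u)=+\infty=J_{\rm hom}(u)$.

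\emph{Upper bound: setup.} Fix $u\in W^{1,\infty}_0(\Omega;\RR^m)$ with $\int_\Omega\h_{\rm hom}(\nabla u)\,dx<+\infty$ (otherwise nothing to prove); then $\nabla u\in\overline{\C}$ a.e. Choose $r>0$ with $B(0,r)\subset\C$, and for $s\in[0,1)$ set $\omega(s):=\sup\{[\h(x,s\xi)-\h(x,\xi)]_+:x\in\RR^d,\ \xi\in\dom\h(x,\cdot)\}$, so that $\omega(s)\to0$ as $s\to1^-$ by (H$_1$); set $\widetilde K_s:=s\overline{\C}+\overline{B(0,(1-s)r/2)}$, which is a compact subset of $\inte\C$ by convexity of $\C$ and the line segment principle, and $\widetilde C_s:=\sup\{\h(x,\xi):x\in\RR^d,\ \xi\in\widetilde K_s\}<+\infty$ by (H$_2$) together with $Y$-periodicity. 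Let $(v_\eps)$ be a recovery sequence for $I_\eps$ at $u$ furnished by Theorem~\ref{main result 1}: $v_\eps\to u$ in $L^1$ and $\limsup_\eps I_\eps(v_\eps)\le\int_\Omega\h_{\rm hom}(\nabla u)\,dx$. Since $I_\eps(v_\eps)<+\infty$ eventually, $\nabla v_\eps\in\overline{\C}$ a.e., whence $(v_\eps)$ is bounded in $W^{1,\infty}(\Omega;\RR^m)$ (bounded gradients, and bounded norms via $v_\eps\to u$ in $L^1$) and, by the compact embedding into $C(\overline\Omega;\RR^m)$, $\rho_\eps:=\|v_\eps-u\|_{C(\overline\Omega)}\to0$; moreover $I_\eps(sv_\eps)\le I_\eps(v_\eps)+\omega(s)|\Omega|$ by the very definition of $\omega(s)$ (using $\nabla v_\eps\in\dom\h(\cdot/\eps,\cdot)$ a.e.).

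\emph{Upper bound: boundary layer and conclusion.} For small $\delta>0$ pick $\varphi_\delta\in C^\infty_c(\Omega)$ with $0\le\varphi_\delta\le1$, $\varphi_\delta\equiv1$ on $\{x\in\Omega:\dist(x,\partial\Omega)\ge\delta\}$, $\varphi_\delta\equiv0$ on $\{x\in\Omega:\dist(x,\partial\Omega)\le\delta/2\}$ and $|\nabla\varphi_\delta|\le c/\delta$, and set $w^{s,\delta}_\eps:=su+\varphi_\delta\,s(v_\eps-u)\in W^{1,\infty}_0(\Omega;\RR^m)$ (it vanishes near $\partial\Omega$). Where $\varphi_\delta\equiv1$ one has $\nabla w^{s,\delta}_\eps=s\nabla v_\eps\in s\overline{\C}$; where $\varphi_\delta\equiv0$, $\nabla w^{s,\delta}_\eps=s\nabla u\in s\overline{\C}$; elsewhere $\nabla w^{s,\delta}_\eps=\big(\varphi_\delta\,s\nabla v_\eps+(1-\varphi_\delta)\,s\nabla u\big)+s\,\nabla\varphi_\delta\otimes(v_\eps-u)$, where the bracketed term lies in $s\overline{\C}$ by convexity and the remainder has norm at most $c\rho_\eps/\delta$. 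Hence, once $\eps$ is small enough (depending on $s,\delta$) that $c\rho_\eps/\delta\le(1-s)r/2$, we get $\nabla w^{s,\delta}_\eps\in\widetilde K_s$ a.e. on $\Omega$, so with $\mu(\delta):=|\{x\in\Omega:\dist(x,\partial\Omega)\le\delta\}|$,
\[
J_\eps(w^{s,\delta}_\eps)=I_\eps(w^{s,\delta}_\eps)\le I_\eps(sv_\eps)+\widetilde C_s\,\mu(\delta)\le I_\eps(v_\eps)+\omega(s)\,|\Omega|+\widetilde C_s\,\mu(\delta).
\]
Since $\|w^{s,\delta}_\eps-su\|_{C(\overline\Omega)}\le s\rho_\eps\to0$, taking $\limsup_{\eps\to0}$ and using Theorem~\ref{main result 1} gives, for every $\delta>0$,
\[
\Gamma\mbox{-}\limsup_{\eps\to0}J_\eps(su)\le\int_\Omega\h_{\rm hom}(\nabla u)\,dx+\omega(s)\,|\Omega|+\widetilde C_s\,\mu(\delta);
\]
letting $\delta\to0$ (so $\mu(\delta)\to0$) yields $\Gamma\mbox{-}\limsup_\eps J_\eps(su)\le\int_\Omega\h_{\rm hom}(\nabla u)\,dx+\omega(s)|\Omega|$, and then letting $s\to1$ (so $su\to u$ in $L^1$, $\omega(s)\to0$) and invoking the $L^1$-lower semicontinuity of $\Gamma\mbox{-}\limsup_\eps J_\eps$ (or a double diagonal extraction) gives $\Gamma\mbox{-}\limsup_\eps J_\eps(u)\le\int_\Omega\h_{\rm hom}(\nabla u)\,dx=J_{\rm hom}(u)$, which finishes the proof.

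\emph{Main obstacle.} The crux is to trap $\nabla w^{s,\delta}_\eps$ inside a compact subset of $\inte\C$ throughout the transition layer despite the gluing error $s\,\nabla\varphi_\delta\otimes(v_\eps-u)$, whose size is of order $1/\delta$; without this the energy would be $+\infty$, and near $\partial\C$ it would blow up by (H$_3$). The resolution combines three ingredients: the preliminary dilation by $s<1$, which by the line segment principle surrounds $s\overline{\C}$ with a buffer of width $(1-s)r$ at the vanishing price $\omega(s)|\Omega|$ allowed by (H$_1$); convexity of $\C$, which keeps the principal part $\varphi_\delta s\nabla v_\eps+(1-\varphi_\delta)s\nabla u$ of the glued gradient in $s\overline{\C}$; and the \emph{uniform} (not merely $L^1$) convergence $v_\eps\to u$, available for free because admissible gradients are confined to the bounded set $\overline{\C}$, which makes the error fit inside the buffer as soon as $\eps\ll\delta$. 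On the resulting compact set $\widetilde K_s\subset\inte\C$, (H$_2$) supplies the uniform bound $\widetilde C_s$, and $|\partial\Omega|=0$ makes the layer's contribution $\widetilde C_s\,\mu(\delta)$ disappear in the limit.
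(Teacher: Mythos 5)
Your proof is correct, but it takes a genuinely different route from the paper. The paper does not deduce Theorem~\ref{main result 1bis} from Theorem~\ref{main result 1} by a cut-off: it proves both statements in parallel, the lower bound by the same blow-up argument with the Poincar\'e inequality replacing Poincar\'e--Wirtinger (Remark~\ref{notconnected}), and the upper bound by observing that the recovery sequences built in Proposition~\ref{upper bound prop} already lie in $\tau_s u+W^{1,\infty}_0(\Omega;\RR^m)$ (the piecewise affine approximants of Lemma~\ref{approximation of sobolev functions} keep the boundary values and Lemma~\ref{homog lemma} only modifies functions inside), so they are admissible for $J_\eps$ whenever $u\in W^{1,\infty}_0(\Omega;\RR^m)$, the dilation being absorbed by Lemma~\ref{strong envelope} and the identification $\h_{\rm hom}=\widehat{\Z\H\h}$ of Theorem~\ref{main result 2}. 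You instead treat Theorem~\ref{main result 1} as a black box and repair the boundary datum a posteriori by gluing $sv_\eps$ to $su$ across a layer: the confinement $\dom\h(x,\cdot)\subset\overline{\C}$ gives uniform convergence of recovery sequences for free, convexity keeps the principal part of the glued gradient in $s\overline{\C}$, the (H$_1$)-dilation creates the buffer $\widetilde K_s\subset\inte\C$ (this is exactly the same scaling device as the paper's Lemma~\ref{strong envelope} and Corollary~\ref{exist lim}), and (H$_2$) with periodicity bounds the layer contribution by $\widetilde C_s\,\mu(\delta)$. What each approach buys: the paper's construction never needs gluing because boundary data are matched throughout, while your argument is more modular and shows that under (H$_1$)--(H$_3$) the Dirichlet result is a formal consequence of the unconstrained one. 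Two harmless imprecisions you may want to fix: $\omega(s)$ and $\widetilde C_s$ should be essential suprema in $x$ (that is what (H$_1$), (H$_2$) and periodicity actually give, and all your integral estimates only need that), and the family $(v_\eps)$ realizing $\Gamma\mbox{-}\limsup_\eps I_\eps(u)$ need not attain the infimum, so one should work with $\theta$-almost optimal families and let $\theta\to0$ at the end.
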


We say that a function $g:\MM^{m\times d}\to [0,+\infty[$ is {\em quasiconvex} at $\xi\in\MM^{m\times d}$ if it is Borel measurable and 
\[
g(\xi)=\inf\left\{\int_Y g(\xi+\nabla \phi(x))dx: \phi\in W^{1,\infty}_0(Y;\RR^m)\right\},
\]
where $Y=]0,1[^d$ is the unit cube in $\RR^d$. If $g$ is quasiconvex at every $\xi\in\MM^{m\times d}$ then $g$ is said quasiconvex. If $g$ is quasiconvex then it is continuous (see for instance Dacorogna \cite{dacorogna08}).

Let us define by $\Q f:\MM^{m\times d}\to [0,+\infty]$ the {\em quasiconvex envelope} of $f:\MM^{m\times d}\to [0,+\infty]$ given by
\[
\Q f(\xi)=\sup\left\{g(\xi):g:\MM^{m\times d}\to [0,+\infty[ \hbox{ is quasiconvex and }g\le f\right\}.
\]
Note that $\Q f$ is lower semicontinuous as pointwise supremum of continuous functions and satisfies for all $\xi\in\MM^{m\times d}$
\begin{equation*}\label{morrey inequality for Qf}
\Q f(\xi)=\inf\left\{\int_Y \Q f(\xi+\nabla \phi(x))dx:\phi\in W^{1,\infty}_0(Y;\RR^m)\right\}.
\end{equation*}

We denote  the space of continuous piecewise affine functions from $D$ to $\RR^m$ by $\Aff(D;\RR^m)$, i.e., $\varphi\in\Aff(D;\RR^m)$ if and only if $\varphi$ is continuous and there exists a finite family $\{D_i\}_{i\in I}$ of open disjoint subsets of $D$ such that $|\partial D_i|=0$ for all $i\in I$, $|D\setminus \cup_{i\in I} D_i|=0$ and for every $i\in I$, $\nabla \varphi\equiv \xi_i$ in $D_i$ with $\xi_i\in\MM^{m\times d}$, and we set $\Aff_0(D;\RR^m)=\{\varphi\in\Aff(D;\RR^m):\varphi=0\hbox{ on }\partial D\}$.

Let $f:\MM^{m\times d}\to [0,+\infty]$ be a Borel measurable. Define $\Z f:\MM^{m\times d}\to [0,+\infty]$ by
\begin{equation}\label{defzf}
\Z f(\xi)=\inf\left\{\int_Y f(\xi+\nabla \psi(x)):\psi\in \Aff_0(Y;\RR^m)\right\}.
\end{equation}
\begin{lemma}[\cite{oah-jpm08a}]\label{quasifinite} If $\Z f$ is finite then $\Q f=\Z f$.
\end{lemma}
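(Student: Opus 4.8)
The plan is to establish the two inequalities $\Q f\le\Z f$ (which holds unconditionally) and $\Z f\le\Q f$ (which is where the finiteness of $\Z f$ enters) separately. The first is immediate from the definitions: if $g:\MM^{m\times d}\to[0,+\infty[$ is quasiconvex with $g\le f$, then for every $\xi$ and every $\psi\in\Aff_0(Y;\RR^m)\subset W^{1,\infty}_0(Y;\RR^m)$ one has $g(\xi)\le\int_Y g(\xi+\nabla\psi)\le\int_Y f(\xi+\nabla\psi)$; taking the infimum over $\psi$ gives $g(\xi)\le\Z f(\xi)$, and then the supremum over all such $g$ gives $\Q f\le\Z f$.

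For the reverse inequality I would show that, when $\Z f$ is finite, $\Z f$ is itself quasiconvex; since $\Z f\le f$ (test with $\psi\equiv 0$, recalling $|Y|=1$) and $\Z f$ is finite-valued, $\Z f$ is then an admissible competitor in the supremum defining $\Q f$, whence $\Z f\le\Q f$ and the lemma follows. To prove that $\Z f$ is quasiconvex I would argue in three steps. First, a semigroup identity $\Z(\Z f)=\Z f$: the inequality $\le$ is again the choice $\psi\equiv 0$, while for $\ge$, given $\psi\in\Aff_0(Y;\RR^m)$ with $\nabla\psi\equiv\eta_i$ on pieces $D_i$ (so that $\xi+\nabla\psi\equiv\xi+\eta_i$ on $D_i$), one fills each $D_i$ up to a null set by a Vitali family of small cubes, inserts on each cube a suitably rescaled near-optimal competitor for $\Z f(\xi+\eta_i)$, and glues everything into a single $\Psi\in\Aff_0(Y;\RR^m)$ with $\int_Y f(\xi+\nabla\Psi)\le\int_Y\Z f(\xi+\nabla\psi)+\delta$; letting $\delta\to 0$ yields $\Z f(\xi)\le\int_Y\Z f(\xi+\nabla\psi)$, i.e.\ $\Z f$ satisfies the quasiconvexity inequality against all piecewise affine test functions. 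Second, testing this Jensen inequality against simple laminates and using the finiteness of $\Z f$, one shows that $\Z f$ is rank-one convex, hence separately convex, hence locally Lipschitz and in particular continuous. Third, continuity upgrades the Jensen inequality from $\Aff_0(Y;\RR^m)$ to all of $W^{1,\infty}_0(Y;\RR^m)$: given $\phi$, pick $\psi_k\in\Aff_0(Y;\RR^m)$ with $\psi_k=0$ on $\partial Y$, $\|\nabla\psi_k\|_{L^\infty}$ bounded, and $\nabla\psi_k\to\nabla\phi$ a.e.; continuity and local boundedness of $\Z f$ let one pass to the limit in $\Z f(\xi)\le\int_Y\Z f(\xi+\nabla\psi_k)$ by dominated convergence, giving $\Z f(\xi)\le\int_Y\Z f(\xi+\nabla\phi)$. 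Thus $\Z f$ is quasiconvex, finite and $\le f$, which closes the argument.

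The main obstacle is the second step: extracting rank-one convexity — and therefore continuity — of $\Z f$ from the mere piecewise-affine Jensen inequality. The standard laminate construction leaves boundary-layer error terms supported on sets of small measure on which the gradients are large, and the finiteness of $\Z f$ is precisely what is needed to control their energetic contribution, through near-optimal piecewise affine competitors of finite energy. Once $\Z f$ is known to be continuous, the first and third steps are routine gluing and density arguments; all of this is carried out in \cite{oah-jpm08a}.
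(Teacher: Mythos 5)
Be aware that the paper itself contains no proof of Lemma \ref{quasifinite}: it is imported verbatim from \cite{oah-jpm08a}, so your attempt can only be measured against the strategy of that reference. Your easy inequality $\Q f\le\Z f$ is correct, and your overall architecture (show that the finite function $\Z f$ is quasiconvex, Borel and $\le f$, hence a competitor in the supremum defining $\Q f$) is the standard one. But the two steps on which everything rests are asserted rather than proved, and as written they fail. (i) In the idempotence step, a Vitali family is countable, so the glued competitor $\Psi$ has countably many affine pieces and is \emph{not} in $\Aff_0(Y;\RR^m)$ as defined here (finitely many pieces), hence cannot be tested in the infimum defining $\Z f(\xi)$; if you instead use finitely many cubes, you are left with a remainder $R\subset D_i$ of small positive measure on which $\nabla\Psi=\eta_i$, contributing $\vert R\vert\, f(\xi+\eta_i)$, and finiteness of $\Z f(\xi+\eta_i)$ gives no control whatsoever on $f(\xi+\eta_i)$, which may be $+\infty$. (ii) More seriously, rank-one convexity does not follow from the piecewise-affine Jensen inequality plus pointwise finiteness: any zero-boundary laminate has a boundary layer whose gradients leave $\{\xi_1,\xi_2\}$, and to make its contribution negligible you need $\Z f$ to be bounded on a compact neighbourhood of the segment $[\xi_1,\xi_2]$, i.e.\ local boundedness --- exactly what you are trying to establish; inserting near-optimal competitors for $\Z f(\xi_1)$, $\Z f(\xi_2)$ does not help on the layer, where one must pay $f$ at matrices about which finiteness of $\Z f$ at two points says nothing.

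The missing ingredient is precisely the continuity of $\Z f$, which in the present paper is available as a separate black box: by Lemma \ref{fonseca} (Fonseca \cite{fonseca88}), $\Z f$ is continuous on $\inte(\dom\Z f)$, which equals all of $\MM^{m\times d}$ once $\Z f$ is finite. The economical proof therefore invokes Lemma \ref{fonseca} for continuity (and Borel measurability) instead of re-deriving it through rank-one convexity, and uses gluing plus approximation of $W^{1,\infty}_0$-functions by piecewise affine ones with uniformly bounded gradients (your step three, which is fine once continuity and local boundedness are in hand) only to obtain $\Z f(\xi)\le\int_Y\Z f(\xi+\nabla\phi)\,dx$ for all $\phi\in W^{1,\infty}_0(Y;\RR^m)$ --- and even there the finite-versus-countable-pieces point of (i) must be addressed honestly. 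As it stands, your closing appeal to \cite{oah-jpm08a} is carrying the entire weight of the argument.
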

\begin{lemma}[\cite{fonseca88}]\label{fonseca}
The function $\Z f$ is continuous in $\inte(\dom\Z f)$.
\end{lemma}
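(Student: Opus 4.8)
The plan is to establish, first, that $\Z f$ is \emph{rank-one convex} wherever it is finite, and then to invoke the classical fact that a rank-one convex function that is locally bounded above on an open set is automatically locally Lipschitz, hence continuous, there. For the rank-one convexity, fix $\xi_0,\xi_1\in\inte(\dom\Z f)$ with $\xi_1-\xi_0=a\otimes b$ a rank-one matrix, and fix $t\in[0,1]$ and $\delta>0$. Using the definition \eqref{defzf}, choose $\psi_0,\psi_1\in\Aff_0(Y;\RR^m)$ with $\int_Y f(\xi_i+\nabla\psi_i)\le\Z f(\xi_i)+\delta$. Now build a laminate-type competitor on $Y$: partition $Y$ into thin slabs orthogonal to $b$, of relative widths $t$ and $1-t$, on which the base gradient is $\xi_t:=t\xi_0+(1-t)\xi_1$ plus, respectively, $(1-t)a\otimes b$ and $-t\,a\otimes b$, so that the slab affine maps agree across the interfaces and the resulting map $\psi\in\Aff(Y;\RR^m)$ has $\psi=0$ on $\partial Y$ up to a small boundary error. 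Inside each slab one rescales (and translates) copies of $\psi_0$, resp. $\psi_1$, exploiting that $\Aff_0$ maps are invariant under affine change of the cube; periodicity/subdivision of $Y$ lets one make the boundary mismatch and the volume of transition layers as small as one wishes. This yields $\Z f(\xi_t)\le t\,\Z f(\xi_0)+(1-t)\,\Z f(\xi_1)+C\delta+o(1)$, and letting the parameters go to $0$ gives rank-one convexity on $\inte(\dom\Z f)$.

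With rank-one convexity in hand, the continuity follows from a standard one-dimensional slicing argument: restricted to any line segment in a rank-one direction, $\Z f$ is a finite convex function of one real variable, hence continuous and locally Lipschitz on the interior of the segment; since any two nearby points of $\MM^{m\times d}$ can be joined by a short chain of segments in the $d$ coordinate rank-one directions $e_i\otimes e_j$ staying inside $\inte(\dom\Z f)$, one gets a local Lipschitz bound with a constant controlled by the oscillation of $\Z f$ on a small ball. The only input needed is that $\Z f$ is \emph{locally bounded above} on $\inte(\dom\Z f)$; but finiteness of $\Z f$ at a point $\xi$ together with rank-one convexity already forces local boundedness above near $\xi$ (average of finitely many finite values at the vertices of a small rank-one cube around $\xi$), so the argument closes on itself.

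The main obstacle is the first step: constructing the admissible competitor for $\xi_t$ out of the competitors for $\xi_0$ and $\xi_1$ while keeping the map piecewise affine, zero on $\partial Y$, and controlling the energy contribution of the geometric transition layers between slabs. This is exactly the laminate construction underlying rank-one convexity of relaxed energies; the care is in (i) choosing the slab geometry so the interface continuity condition $[\![\nabla\psi]\!]=c\otimes b$ is met, (ii) nesting rescaled copies of $\psi_0,\psi_1$ so as not to create new boundary terms, and (iii) showing the layers can be taken with vanishing total measure so their (possibly large, since $f$ may be $+\infty$ nearby) energy does not appear — which is where one uses that $\xi_0,\xi_1,\xi_t$ all lie in the \emph{interior} of $\dom\Z f$, so the slab base gradients can be kept in a region where $\Z f$, hence a fortiori some admissible $f$-competitor average, is finite. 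Everything after that is routine.
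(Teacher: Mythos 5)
The paper does not prove this lemma at all: it is quoted from Fonseca \cite{fonseca88}, so there is no internal argument to compare yours with, and your sketch has to stand on its own. As it stands it does not, because the two halves of your plan are circular. The second half (rank-one convexity $\Rightarrow$ separate convexity along the directions $e_i\otimes e_j$, plus a local upper bound $\Rightarrow$ locally Lipschitz) is classical and fine, modulo the slip that convexity on a segment bounds the value by the \emph{maximum}, not the average, of the endpoint values. The problem is the first half. In the laminate construction for $\Z f\bigl(t\xi_0+(1-t)\xi_1\bigr)$ the competitor must be a single map in $\Aff_0(Y;\RR^m)$ whose energy is computed with $f$ itself, and in the transition layers its gradient sweeps (a neighborhood of) the whole segment $[\xi_0,\xi_1]$. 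For the extended-real-valued integrands considered here, $f$ may be identically $+\infty$ at all of those matrices even when the segment lies deep inside $\inte(\dom\Z f)$ (the domain of $f$ can be much smaller than the domain of $\Z f$), and making the layers of small measure does not help when the integrand there is $+\infty$: the product ``small measure $\times\ \infty$'' is not small. Your parenthetical fix --- that the layer gradients lie ``in a region where $\Z f$, hence a fortiori some admissible $f$-competitor average, is finite'' --- conflates finiteness of $\Z f$ at those matrices with finiteness of $f$ there. To use finiteness of $\Z f$ you must additionally insert rescaled copies of near-optimal $\Aff_0$-competitors inside the layers (a Vitali patching argument, essentially $\Z(\Z f)=\Z f$), and then the layer contribution is bounded by $|{\rm layer}|\cdot\sup\Z f$ over a neighborhood of the segment; so you need $\Z f$ to be \emph{locally bounded above} near the segment \emph{before} you can prove the rank-one inequality. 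But in your scheme local boundedness is only obtained afterwards, as a consequence of rank-one convexity (vertices of a small cube). Pointwise finiteness of $\Z f$ on $\inte(\dom\Z f)$ does not by itself give a uniform bound, so the argument does not close: establishing this local boundedness without already having the convexity inequality is precisely the nontrivial content of Fonseca's lemma, and it is missing from your proof. (A minor further point: for arbitrary rank-one connected $\xi_0,\xi_1\in\inte(\dom\Z f)$ the segment need not stay in this set, so the statement should in any case be localized to small balls.)
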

The following representation result is the main ingredient for the proof of Theorem~\ref{main result 1}. More precisely, items \ref{mainresult21}, \ref{mainresult22} and \ref{mainresult23} are used in the proof of the $\Gamma$-$\liminf_{\eps\to 0}I_\eps$ (see Subsection \ref{gliminf}), and \ref{mainresult24} is used in $\Gamma$-$\limsup_{\eps\to 0}I_\eps$ (see Subsection \ref{glimsup}).
\begin{theorem}\label{main result 2} Assume that {\rm (${\rm H}_1$)}, {\rm (${\rm H}_2$)} and {\rm (${\rm H}_3$)} hold. There exists a nondecreasing sequence of functions $\{\h_n\}_{n\in\NN^\ast}$ such that for any $n\in\NN^\ast$ the function $\h_n:\RR^d\times \MM^{m\times d}\to [0,+\infty[$ is Borel measurable, $Y$-periodic with respect to its first variable and satisfies
\begin{enumerate}[label={\rm (\roman{*})}, ref={\rm (\roman{*})}]
\item there exists $\alpha_n>0$ such that $\h_n(x,\xi)\le \alpha_n\left(1+\vert\xi\vert\right)$ a.e. in $\RR^d$ and $\xi\in\MM^{m\times d}$\label{mainresult21};
\item $\h_n(x,\xi)\le \h(x,\xi)$ for all $x\in\RR^d$ and all $\xi\in\MM^{m\times d}$\label{mainresult22};
\item $\sup_{n\in\NN^\ast}\H\h_n(\xi)=\overline{\H\h}(\xi)$ for all $\xi\in\MM^{m\times d}$, where $\overline{\H\h}$ is the lower semicontinuous envelope of $\H\h$\label{mainresult23};
\item \label{mainresult24} it holds $\overline{\H\h}=\Q\H\h=\widehat{\Z\H\h}$ where
\[
\widehat{\Z\H\h}(\xi)=\left\{\begin{array}{ll}
\displaystyle \Z\H\h(\xi)& \hbox{ if }\xi\in\inte\C\\ \\
\displaystyle\lim_{[0,1[\ni t\to 1}\Z\H\h(t\xi)&\hbox{ if }\xi\in\partial\C\\\\
+\infty &\hbox{ otherwise.}
\end{array}
\right.
\]
\end{enumerate}
\end{theorem}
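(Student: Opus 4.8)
The plan is to establish the four items in the order \ref{mainresult21}--\ref{mainresult22}, then \ref{mainresult24}, and finally \ref{mainresult23}, the last being the heart of the matter.

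\emph{Construction of the sequence.} I would take the inf-convolution of $\h$ with a linear penalty,
\[
\h_n(x,\xi):=\inf\big\{\h(x,\zeta)+n\,|\xi-\zeta|:\zeta\in\MM^{m\times d}\big\}.
\]
Then $\h_n$ is $Y$-periodic in $x$, Borel measurable (replacing the $\zeta$-infimum by a Borel representative if needed, a routine measurable-selection point), nondecreasing in $n$, and $n$-Lipschitz in $\xi$; taking $\zeta=\xi$ gives \ref{mainresult22}, while $\h_n(x,\xi)\le\h_n(x,0)+n|\xi|\le\h(x,0)+n|\xi|$ together with $\|\h(\cdot,0)\|_{L^\infty(\RR^d)}<\infty$ (from $Y$-periodicity and (H$_2$)) gives \ref{mainresult21}. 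The $n$-Lipschitz continuity of $\h_n(x,\cdot)$ passes to the $k$-cell minimisation and hence to $\H\h_n$, so each $\H\h_n$ is $n$-Lipschitz, in particular finite and continuous.

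\emph{Item \ref{mainresult24}.} First I would transfer the hypotheses from $\h$ to $\H\h$. Using $t(\xi+\nabla\phi)=t\xi+\nabla(t\phi)$ in the cell problem, (H$_1$) makes $\H\h$ ru-usc with the same modulus; $\H\h(\xi)\le\int_Y\h(x,\xi)\,dx$ and (H$_2$) make it locally bounded on $\inte\C$; and, since admissible correctors have gradient-average $0$, a supporting hyperplane at a point of $\partial\C$ forces $\dom\H\h\subset\inte\C$ and, more quantitatively (the corrector must concentrate near the face of $\overline\C$ at that point, where $\h$ is large by (H$_3$)), $\H\h(\xi)\to+\infty$ as $\xi\to\partial\C$. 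Hence $\dom\Z\H\h=\inte\C$ too; on $\inte\C$ we have $\Z\H\h\le\H\h<+\infty$, so Lemma \ref{quasifinite} gives $\Q\H\h=\Z\H\h$ there and Lemma \ref{fonseca} gives $\Z\H\h$ continuous on $\inte\C=\inte(\dom\Z\H\h)$; moreover $\Z\H\h$ is ru-usc, so for $\xi\in\partial\C$ the limit defining $\widehat{\Z\H\h}(\xi)$ exists (the radial map is, up to $\eps$, nondecreasing) and equals $+\infty$. Thus $\widehat{\Z\H\h}$ is lower semicontinuous, quasiconvex and $\le\H\h$, so $\widehat{\Z\H\h}\le\Q\H\h\le\overline{\H\h}$; the reverse inequality $\overline{\H\h}\le\widehat{\Z\H\h}$ follows on $\inte\C$ from the piecewise-affine competitors of $\Z\H\h$ together with interior boundary-layer constructions keeping test gradients inside $\inte\C$ (as in the proof of Lemma \ref{quasifinite}) and the ru-usc property, and trivially on $\partial\C$; this is the relaxation identity for ru-usc integrands, and it proves \ref{mainresult24}.

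\emph{Item \ref{mainresult23}.} The bound $\sup_n\H\h_n\le\overline{\H\h}$ is immediate, since $\H\h_n$ is continuous and $\le\H\h$, hence $\le\overline{\H\h}$. For the reverse inequality I would split according to the position of $\xi$, using \ref{mainresult24} for the value of $\overline{\H\h}=\widehat{\Z\H\h}$. If $\xi\notin\overline\C$: $\h_n(x,\xi+\nabla\phi)\ge n\,\dist(\xi+\nabla\phi,\overline\C)$, so by convexity of $\dist(\cdot,\overline\C)$ and $\int_{kY}\nabla\phi=0$, $\H\h_n(\xi)\ge n\,\dist(\xi,\overline\C)\to+\infty=\widehat{\Z\H\h}(\xi)$. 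If $\xi\in\partial\C$: with $\nu$ a supporting vector at $\xi$, combining $\langle\nu,\cdot\rangle\le\langle\nu,\xi\rangle$ on $\overline\C$, $\h_n\ge n\,\dist(\cdot,\overline\C)$ and $\int_{kY}\nabla\phi=0$ forces $\langle\nu,\nabla\phi\rangle\to0$ in average, so $\xi+\nabla\phi$ concentrates near the face of $\overline\C$ at $\xi$, which lies in $\partial\C$; then (H$_3$) and the definition of $\h_n$ give $\frac1{k^d}\int_{kY}\h_n(x,\xi+\nabla\phi)\to+\infty$ uniformly in $k$ and $\phi$, so $\H\h_n(\xi)\to+\infty=\widehat{\Z\H\h}(\xi)$. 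The remaining case $\xi\in\inte\C$ is the main obstacle. Set $V:=\sup_n\H\h_n(\xi)$ and assume $V<+\infty$. Given $\eps>0$, pick near-optimal correctors $\phi_n\in W^{1,\infty}_0(k_nY;\RR^m)$ and, via a measurable selection in the definition of $\h_n$, fields $\theta_n$ with
\[
\frac{1}{k_n^d}\int_{k_nY}\h(x,\theta_n(x))\,dx\le V+\eps,\qquad\frac{1}{k_n^d}\int_{k_nY}|\xi+\nabla\phi_n(x)-\theta_n(x)|\,dx\le\frac{V+\eps}{n}.
\]
By (H$_3$), $\{\theta_n\}$ is tight, so after rescaling $k_nY$ onto $Y$ and passing to a subsequence it generates a Young measure $\{\mu_y\}_{y\in Y}$ supported in $\overline\C$ with barycenter $\xi+\nabla v$, $v\in W^{1,\infty}_0(Y;\RR^m)$. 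Lower semicontinuity of the homogenised integral along Young measures should then give $V+\eps\ge\liminf_n\int_Y\h(k_ny,\theta_n(y))\,dy\ge\int_Y\Q\H\h(\xi+\nabla v(y))\,dy\ge\Q\H\h(\xi)$, the last step by quasiconvexity of $\Q\H\h$; letting $\eps\to0$ and using \ref{mainresult24} gives $\sup_n\H\h_n(\xi)\ge\Q\H\h(\xi)=\overline{\H\h}(\xi)$, as required.

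\emph{Main difficulty.} The hard part will be this last chain of inequalities for $\xi\in\inte\C$: one must pass to the limit in a functional that simultaneously homogenises (oscillation $1/k_n$), relaxes (the field $\theta_n$ recovers gradient structure only in the Young-measure/biting sense, which is precisely why $\H\h$ must be replaced by $\Q\H\h$ in the limit), and is driven by a merely Borel integrand that is $+\infty$ near $\partial\C$. Here (H$_3$) is what prevents concentration of $\theta_n$ near $\partial\C$ and at infinity (tightness), (H$_2$) supplies the bounds used to identify the limit on compact subsets of $\inte\C$, and (H$_1$) — through the ru-usc property of $\H\h$ and $\Q\H\h$ used in Item \ref{mainresult24} — is what allows the barycenter $\xi+\nabla v$ to be handled without boundary corrections.
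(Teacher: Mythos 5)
Your construction of $\h_n$ by inf-convolution is a legitimate variant of the paper's choice (the paper truncates: $\h_n=\h$ on $t_n\overline{\C}$ and $n(1+\dist(\cdot,\overline{\C}))$ outside, with $t_n$ from Lemma \ref{decomposition}), and your handling of items (i), (ii), of the case $\xi\notin\overline{\C}$, and (up to a quantitative Markov/supporting-hyperplane argument you only sketch) of $\xi\in\partial\C$ can be made rigorous. The genuine gap is the central case $\xi\in\inte\C$ of item (iii), i.e. the inequality $\sup_n\H\h_n(\xi)\ge\overline{\H\h}(\xi)$.

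There you invoke a ``lower semicontinuity of the homogenised integral along Young measures'', namely $\liminf_n\int_Y\h(k_ny,\theta_n(y))\,dy\ge\int_Y\Q\H\h(\xi+\nabla v(y))\,dy$. This is exactly the point at issue and cannot be cited: $\h$ is merely Borel, equal to $+\infty$ off $\overline{\C}$ and singular at $\partial\C$, and $\theta_n$ is not a gradient field; no lower bound of this type is available in that generality, and the natural routes to it are either item (iii) itself (approximate $\h$ from below by linear-growth integrands whose homogenized densities exhaust $\overline{\H\h}$ --- circular) or the paper's own argument. Note also that the obvious attempt to use your $\h_N$'s --- bound $\h\ge\h_N$, use the $N$-Lipschitz continuity of $\h_N(x,\cdot)$ to replace $\theta_n$ by $\xi+\nabla\phi_n$ at cost $N(V+\eps)/n$, then the cell formula to get $\H\h_N(\xi)$ --- only returns $V+\eps\ge\sup_N\H\h_N(\xi)=V$, a tautology. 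What is missing is the mechanism the paper uses: M\"uller's sharp version of Zhang's lemma (Lemma \ref{Muller}) upgrades the near-optimal correctors for $\h_n$ (whose gradients are close to $\overline{\C}$ only in $L^1$) to functions in $W^{1,\infty}_0$ whose gradients lie \emph{uniformly} in a small neighbourhood of $\overline{\C}$, up to a set of vanishing measure controlled through (H$_2$); a contraction by factors $t_{\sigma(k)}\tau_k\to1$ (Lemma \ref{voisinage r} and the line segment principle) then pushes these gradients into $\inte\C$, and (H$_1$) (the ru-usc property) absorbs the resulting error, yielding the key inequality $\H\h(t_{\sigma(k)}\tau_k\xi)\le\frac1s+\frac1k+\sup_n\H\h_n(\xi)$, from which (iii) \emph{and} (iv) follow simultaneously by lower semicontinuity and the radial continuity of $\widehat{\Z\H\h}$ (Corollary \ref{exist lim}, Remark \ref{remarkwide}). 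Without such a truncation step your chain for $\xi\in\inte\C$ does not close; relatedly, your separate derivation of (iv) also rests on unproved relaxation claims (quasiconvexity of the extended-valued $\widehat{\Z\H\h}$, and $\overline{\H\h}\le\widehat{\Z\H\h}$ on $\inte\C$ via unspecified boundary-layer constructions), whereas in the paper both items are consequences of the same key inequality.
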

Let $h:\MM^{m\times d}\to [0,+\infty]$ be a Borel measurable function. We say that $h$ is~{\em $p$-sup-quasiconvex} if there exist $p\in [1,+\infty[$ and a nondecreasing sequence $\left\{h_n\right\}_{n\in\NN}$, $h_n:\MM^{m\times d}\to [0,+\infty[$ such that
\begin{enumerate}[label={\rm (\roman{*})}, ref={\rm (\roman{*})}]
\item $h_n$ is quasiconvex for all $n\in\NN$;
\item for every $n\in\NN$ there exists $\alpha_n>0$ such that $h_n(\xi)\le \alpha_n(1+\vert\xi\vert^p)$ for all $\xi\in\MM^{m\times d}$;
\item for every $\xi\in\MM^{m\times d} $ 
\[
\sup\{h_n(\xi):n\in\NN\}=h(\xi).
\]
\end{enumerate}
It is easy to see that if $h$ is $p$-sup-quasiconvex then it is lower semicontinuous as pointwise supremum of continuous functions and satisfies
\begin{equation*}\label{morrey inequality for g}
h(\xi)=\inf\left\{\int_Yh(\xi+\nabla \phi(x))dx:\phi\in W^{1,\infty}_0(Y;\RR^m)\right\}
\end{equation*}
for all $\xi\in\MM^{m\times d}$. Tartar \cite{tartar93} has shown that there exist quasiconvex functions which are not {$p$-sup-quasiconvex} for any $p$. From Theorem \ref{main result 2}, we deduce
\begin{corollary}\label{co}Under the same assumptions of Theorem \ref{main result 2}, $\overline{\H\h}$ is $1$-sup-quasi\-convex.
\end{corollary}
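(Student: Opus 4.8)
The plan is to use, as a $1$-sup-quasiconvex approximating sequence for $\overline{\H\h}$, the sequence $\{\H\h_n\}_{n\in\NN^\ast}$ itself, where $\{\h_n\}_{n\in\NN^\ast}$ is the sequence produced by Theorem \ref{main result 2}. Three of the four defining properties are then immediate. Testing the infimum defining $\H\h_n(\xi)$ with the scale $1$ and $\phi\equiv 0$, item \ref{mainresult21} gives $0\le\H\h_n(\xi)\le\int_Y\h_n(x,\xi)\,dx\le\alpha_n(1+|\xi|)$ for all $\xi\in\MM^{m\times d}$ (recall $|Y|=1$), so each $\H\h_n$ is real-valued and satisfies the growth bound with $p=1$. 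Since $f\mapsto\H f$ is monotone and $\{\h_n\}$ is nondecreasing, $\{\H\h_n\}$ is nondecreasing. And item \ref{mainresult23} says exactly $\sup_{n}\H\h_n=\overline{\H\h}$. Thus everything reduces to proving that each $\H\h_n$ is quasiconvex. (It is essential that $\H\h_n$ itself, and not just $\Q\H\h_n$, be quasiconvex: by items \ref{mainresult23} and \ref{mainresult24}, $\sup_n\H\h_n=\widehat{\Z\H\h}=+\infty$ outside $\overline{\C}$, while nothing in the statement bounds $\Q\H\h_n$ from below there, so $\{\Q\H\h_n\}$ cannot be used directly.)

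Fix $n$. Taking $\psi\equiv 0$ in \eqref{defzf} gives $\Z\H\h_n(\xi)\le\H\h_n(\xi)<+\infty$ for every $\xi$, so $\Z\H\h_n$ is finite and Lemma \ref{quasifinite} yields $\Q\H\h_n=\Z\H\h_n$; in particular $\Q\H\h_n\le\H\h_n<+\infty$, so, being finite, lower semicontinuous and satisfying the Morrey-type identity recorded just after the definition of $\Q$, the function $\Q\H\h_n$ is quasiconvex. Hence it suffices to prove $\Z\H\h_n=\H\h_n$, for then $\H\h_n=\Q\H\h_n$ is quasiconvex. Only the inequality $\H\h_n(\xi)\le\int_Y\H\h_n(\xi+\nabla\psi(x))\,dx$, for $\psi\in\Aff_0(Y;\RR^m)$, is nontrivial.

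I would prove it by the classical homogenization gluing. Let $\psi\in\Aff_0(Y;\RR^m)$ have partition $\{Y_i\}_{i\in I}$ with $\nabla\psi\equiv\zeta_i$ on $Y_i$, so the right-hand side is $\sum_i|Y_i|\H\h_n(\xi+\zeta_i)$. Given $\delta>0$, choose for each $i$ an integer $k_i\ge 1$ and $\phi_i\in W^{1,\infty}_0(k_iY;\RR^m)$ with $k_i^{-d}\int_{k_iY}\h_n(x,\xi+\zeta_i+\nabla\phi_i)\,dx\le\H\h_n(\xi+\zeta_i)+\delta$, and extend $\phi_i$ by $k_iY$-periodicity. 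For $R\in\NN^\ast$ large, put on the cube $RY$ the competitor $\Phi$ equal, on $RY_i:=\{y:y/R\in Y_i\}$, to $R\psi(\cdot/R)+\theta_i\phi_i$, where $\theta_i$ is a cut-off that is $1$ on $RY_i$ outside an $\ell_R$-neighbourhood of $\partial(RY_i)$, vanishes on $\partial(RY_i)$, and has gradient of order $1/\ell_R$. With $\ell_R\to\infty$ and $\ell_R/R\to0$, the transition layers have vanishing relative measure and, thanks to the linear bound in \ref{mainresult21}, contribute $o(1)$ to $R^{-d}\int_{RY}\h_n(x,\xi+\nabla\Phi)\,dx$; on the complementary region $\nabla\Phi=\zeta_i+\nabla\phi_i$, and the $k_iY$-periodicity of $x\mapsto\h_n(x,\xi+\zeta_i+\nabla\phi_i(x))$ yields, up to a further $o(1)$ boundary error, the contribution $|Y_i|(\H\h_n(\xi+\zeta_i)+\delta)$. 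Since $\Phi\in W^{1,\infty}_0(RY;\RR^m)$, the definition of $\H$ gives
\[
\H\h_n(\xi)\ \le\ \frac1{R^d}\int_{RY}\h_n(x,\xi+\nabla\Phi)\,dx\ \le\ \sum_i|Y_i|\big(\H\h_n(\xi+\zeta_i)+\delta\big)+o(1).
\]
Letting $R\to\infty$ and then $\delta\to0$ proves the inequality, hence $\Z\H\h_n=\H\h_n$, hence $\H\h_n$ is quasiconvex, and $\{\H\h_n\}_{n\in\NN^\ast}$ witnesses that $\overline{\H\h}$ is $1$-sup-quasiconvex.

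The only step with real content is the equality $\Z\H\h_n=\H\h_n$, i.e. the quasiconvexity of the homogenized cell integrand of a $Y$-periodic Borel density of linear growth. This is the standard gluing sketched above and is in all likelihood already part of the apparatus behind Theorem \ref{main result 1} (it is what lets the $\Gamma$-$\liminf$ localize to $\int_\Omega\overline{\H\h}(\nabla u)\,dx$); granting it, the corollary is just a recollection of items \ref{mainresult21} and \ref{mainresult23}.
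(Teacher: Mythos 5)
Your proof is correct, and its skeleton is the same as the paper's: the witnessing sequence is $\{\H\h_n\}_{n\in\NN^*}$, the linear bound comes from item \ref{mainresult21} via $\H\h_n(\xi)\le\int_Y\h_n(x,\xi)\,dx\le\alpha_n(1+\vert\xi\vert)$, monotonicity is inherited from that of $\{\h_n\}$, and the supremum identity is item \ref{mainresult23}. The only point of substance, the quasiconvexity of each $\H\h_n$, is where you diverge. The paper settles it with a single citation of Theorem \ref{BraidesTheorem}, meaning in practice the paragraph ``Proof of quasiconvexity of $\H f$'' closing Section \ref{semihom}: for any $Y$-periodic Borel integrand of linear growth one has $\Z\H f=\H f$, hence $\Q\H f=\H f$ by Lemma \ref{quasifinite}; that argument goes through the semi-homogenization functionals $G_r$ and $H_r$, the blow-up lower bound \eqref{braides1}, the sequential weak$^*$ lower semicontinuity of $G_r$, M\"uller's periodization Lemma \ref{homog lemma}, and a final passage $r\to+\infty$. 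You instead prove the same identity $\Z\H\h_n=\H\h_n$ directly at the level of the multi-cell formula, by assembling on a large cube $RY$ a competitor that glues, on each affine piece of $\psi$, a near-optimal periodic test function with cut-offs whose transition layers have vanishing relative measure (controlled by the linear bound of item \ref{mainresult21} and by $\vert\partial Y_i\vert=0$), and then you invoke Lemma \ref{quasifinite} exactly as the paper does. Your route is more elementary and self-contained (no $\Gamma$-type relaxation functionals, no $r\to+\infty$ limit), at the price of redoing, inside the cell formula, essentially the construction of Lemma \ref{homog lemma}; the paper's two-line proof simply reuses machinery it needs anyway for Theorem \ref{main result 1}. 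Your parenthetical observation that one should establish quasiconvexity of $\H\h_n$ itself rather than of $\Q\H\h_n$ is fair as a reading of the bare statement of Theorem \ref{main result 2}, though it becomes moot once $\Q\H\h_n=\H\h_n$ is proved; the sketched details of the gluing (the extra term $\phi_i\otimes\nabla\theta_i$ in the cut-off gradient, and the Riemann--Lebesgue-type counting of period cells near $\partial(RY_i)$) are standard and unproblematic.
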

\begin{proof} Since Theorem \ref{main result 2} \ref{mainresult21}, for every $\xi\in\MM^{m\times d}$ and every $n\in\NN^*$
\[
\H\h_n(\xi)\le \int_Y\h_n(x,\xi)dx\le\alpha_n(1+\vert\xi\vert).
\]
From Theorem \ref{BraidesTheorem}, each $\H\h_n$ is quasiconvex and by Theorem \ref{main result 2} \ref{mainresult23}, it follows that $\overline{\H\h}$ is $1$-sup-quasiconvex.
\end{proof}
\subsection{Organization of the paper} In Section \ref{sec5}, we show an application of our results for homogenization problems in hyperelasticity with bounded gradients of deformations. Section \ref{semihom} is concerned with ``semi-homogenization" of periodic integrals with linear growth in $W^{1,\infty}(\Omega;\RR^m)$. This provides us, among other things, that $\H f$ is quasiconvex when $f$ is of linear growth. The proof of Theorem \ref{main result 1} is divided into two steps. The proof of the $\Gamma\mbox{-}\liminf_{\eps\to 0}I_\eps$ follows easily by using Theorem \ref{main result 2} and Theorem \ref{BraidesTheorem}. The proof of the $\Gamma\mbox{-}\limsup_{\eps\to 0}I_\eps$ is more classical, we use approximation result for Lipschitz functions by continuous affine piecewise, and one lemma on $\Z f$ which is developed in our previous papers (see for instance \cite{oah-jpm08a}). In Section \ref{lineseg} we are mainly concerned with establishing some properties of $\H\h$ and $\Z\H\h$. The proof of Theorem \ref{main result 2} is achieved by adapting some arguments of M\"uller \cite{muller99} to our case.
\section{Periodic homogenization in hyperelasticity for bounded gradients of deformations}\label{sec5}
The goal of this section is to show that, by using our results, we can take account of the natural determinant conditions of hyperelasticity in homogenization problems with bounded gradients of deformations. 

We assume that $m=d$. Let $A\in L^\infty_{\rm loc}(\RR^d)$ be an $Y$-periodic function satisfying for some $c>0$
\[
\mathop{{\rm essinf}}_{x\in\RR^d}A(x)\ge c.
\]
Define $f:\MM^{d\times d}\to [0,+\infty]$ by 
\[
f(\xi)=\left\{\begin{array}{cl}
g(\xi)+h(\vert\xi-I\vert)& \hbox{ if }\xi\in B(I)\\
+\infty&\hbox{ otherwise,}
\end{array}\right.
\]
where $B(I)=\{\xi\in\MM^{d\times d}:\vert\xi-I\vert<1\}$, and 
\begin{enumerate}[label=$\diamond$, ref=$\diamond$]
\item $g\in C(\overline{B}(I))$ is continuous and $g\ge 0$;
\item $h: [0,1[\to {\RR}_+$ is convex, and $h(t)\ge C\left(\frac{1}{1-t^\alpha}-1\right)$ for all $t\in\RR_+$, for some $\alpha>0,C>0$.
\end{enumerate}
The open ball $B(I)$ is contained in $\MM^{d\times d}_{+}=\{\xi\in\MM^{d\times d}:\det\xi>0\}$. Indeed, by using the infinite sum $\zeta=\sum_{i=0}^{+\infty}(I-\xi)^i$, it is easy to show that if $\vert\xi-I\vert<1$ then $I-(I-\xi)=\xi$ is invertible with inverse $\zeta$. Then with the same arguments, $I-t(I-\xi)$ is invertible for all $t\in [0,1[$. The continuous function $[0,1]\ni t\mapsto \det(I-t(I-\xi))=\alpha(t)$ is such that $\alpha(0)=1$ and $\alpha(t)\not=0$ for all $t\in [0,1]$, it follows that $\alpha(t)>0$ for all $t\in [0,1]$, and in particular $\det\xi>0$.

We consider the integrand $\RR^d\times \MM^{d\times d}\ni(x,\xi)\mapsto\h(x,\xi)=A(x)f(\xi)$ with $A$ and $f$ as above. Proposition \ref{nim} below shows that if $\h$ is the stored energy density of a periodically heterogeneous material, it satisfies the natural requirements of hyperelasticity, i.e., non-interpenetration of the matter and the requirement of infinite amount of energy to compress finite volume into zero volume. However, only ``small" gradients of deformations around equilibrium configuration are allowed. By choosing $\alpha$ very large, any stored energy density $g$ is almost not modified around the equilibrium configuration. Thus it is possible to consider a large range of nonlinear models by specifying $g$ and by adding the ``singular" perturbation $h(\vert\cdot-I\vert)$.
\begin{proposition}\label{nim} The function $\h$ is a Carath\'eodory integrand and satisfies for every $x\in\RR^d$
\[
\dom \h(x,\cdot)\subset\MM^{d\times d}_+\quad\mbox{ and }\quad\lim_{\begin{subarray}{c} 
\det\xi\to 0\\ 
\xi\in\dom\h(x,\cdot) 
\end{subarray}}\h(x,\xi)=+\infty.
\]
\end{proposition}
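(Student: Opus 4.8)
The plan is to reduce the statement to elementary properties of the profile $f$, since $A$ enters only as a measurable factor bounded below by $c>0$; fixing, as we may, a representative of $A$ with $A(x)\ge c$ for every $x$, it then suffices to establish the corresponding $x$-pointwise facts about $f$. The first point I would record is that $f:\MM^{d\times d}\to[0,+\infty]$ is continuous as a map into the extended half-line $[0,+\infty]$: on the open ball $B(I)$ it equals $g+h(|\cdot-I|)$, which is finite and continuous since $g\in C(\overline{B}(I))$ and a finite-valued convex function on $[0,1[$ is continuous there; while on $\MM^{d\times d}\setminus B(I)$ it is identically $+\infty$, and this is compatible with continuity into $[0,+\infty]$ precisely because the lower bound $h(t)\ge C((1-t^\alpha)^{-1}-1)$ (for $t\in[0,1[$) forces $h(t)\to+\infty$ as $t\to1^-$, so that $f(\zeta)\to+\infty$ whenever $B(I)\ni\zeta\to\xi$ with $|\xi-I|=1$. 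Multiplying by the $\xi$-independent measurable function $A$ (with $A\ge c>0$) then immediately gives that $x\mapsto\h(x,\xi)$ is measurable and $\xi\mapsto\h(x,\xi)$ is continuous, i.e., that $\h$ is Carath\'eodory, and that $\dom\h(x,\cdot)=\{f<+\infty\}=B(I)$; the inclusion $B(I)\subset\MM^{d\times d}_+$ is exactly the Neumann-series computation recalled just before the statement, which settles the first displayed assertion.

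For the singular behaviour near $\{\det\xi=0\}$ I would establish the slightly stronger, $x$-uniform statement that $f(\xi)\to+\infty$ as $\det\xi\to0$ with $\xi\in B(I)$; since $\h(x,\xi)=A(x)f(\xi)\ge c\,f(\xi)$ and $\dom\h(x,\cdot)=B(I)$, this gives the limit for every $x$. The crux is a compactness argument converting smallness of $\det\xi$ into proximity of $|\xi-I|$ to $1$: for each $\eta\in(0,1)$ the closed ball $\{\xi:|\xi-I|\le1-\eta\}$ is compact and contained in $B(I)\subset\MM^{d\times d}_+$, so $\det$ attains on it a minimum $\mu_\eta>0$. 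Hence any $\xi\in B(I)$ with $\det\xi<\mu_\eta$ must satisfy $1-\eta<|\xi-I|<1$, and then, using that $t\mapsto(1-t^\alpha)^{-1}$ is nondecreasing on $[0,1[$,
\[
f(\xi)\ge h(|\xi-I|)\ge C\Big(\frac{1}{1-|\xi-I|^\alpha}-1\Big)\ge C\Big(\frac{1}{1-(1-\eta)^\alpha}-1\Big).
\]
As $(1-\eta)^\alpha\to1$ when $\eta\to0^+$, the right-hand side tends to $+\infty$; so, given $M>0$, one picks $\eta$ small enough that this quantity exceeds $M$ and then takes $\delta:=\mu_\eta$, obtaining $f(\xi)\ge M$ for all $\xi\in B(I)$ with $\det\xi<\delta$ — which is the desired limit.

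I do not expect a serious obstacle. The two points that deserve a little attention are: (i) the continuity of $f$ into $[0,+\infty]$ needed for the Carath\'eodory property, where the role of the hypothesis $h(t)\ge C((1-t^\alpha)^{-1}-1)$ is precisely to ensure that the value $+\infty$ is attained ``continuously'' as $\xi$ approaches $\partial B(I)$; and (ii) the short compactness/positivity argument linking $\det\xi\to0$ to $|\xi-I|\to1$ within $B(I)$, which is the genuine (if modest) content of the proposition and uses only $B(I)\subset\MM^{d\times d}_+$ together with continuity and strict positivity of $\det$ on the compact sub-balls $\{|\xi-I|\le1-\eta\}$.
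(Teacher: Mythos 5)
Your proof is correct and follows essentially the same route as the paper: continuity of $f$ into $[0,+\infty]$ (forced near $\partial B(I)$ by the lower bound on $h$) gives the Carath\'eodory property, and the singular limit comes from converting $\det\xi\to 0$ into $\vert\xi-I\vert\to 1$ inside $B(I)\subset\MM^{d\times d}_+$. The only cosmetic difference is that you make this conversion quantitatively, by minimizing $\det$ over the compact sub-balls $\{\vert\xi-I\vert\le 1-\eta\}$ (which even yields an $x$-uniform bound via $\h\ge c f$), whereas the paper argues sequentially by extracting a convergent subsequence; both rest on exactly the same facts.
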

\begin{proof} Since $h$ is convex finite in $[0,1[$ and
\[
\lim_{\vert\xi-I\vert\to 1}h(\vert \xi-I\vert)\ge C\left(\lim_{\vert\xi-I\vert\to 1}\frac{1}{1-\vert\xi-I\vert^\alpha}-1\right)=+\infty,
\]
it follows that the function 
\[
\MM^{d\times d}\ni\xi\mapsto\left\{
\begin{array}{cl}
h(\vert\xi-I\vert)&\mbox{ if }\xi\in B(I)\\
+\infty&\mbox{ otherwise, }
\end{array}
\right.
\]
is continuous. Thus $\h$ is a Carath\'eodory integrand.

We have $\dom \h(x,\cdot)=B(I)\subset \MM^{d\times d}_+$ for all $x\in\RR^d$. Let $\{\xi_n\}_{n\in\NN}\subset B(I)$ such that $\det\xi_n\to 0$ as $n\to +\infty$. There exists a subsequence $\{\xi_{\sigma(n)}\}_{n\in\NN}$ and $\xi\in \overline{B}(I)$ such that $\xi_{\sigma(n)}\to \xi$ as $n\to +\infty$. By continuity, $\det\xi=0$ which implies $\vert\xi-I\vert=1$. Thus for every $x\in\RR^d$
\[
\liminf_{n\to +\infty}\h(x,\xi_n)\ge cC\lim_{n\to +\infty}\frac{1}{1-\vert \xi_n-I\vert^\alpha}-cC=+\infty,
\]
hence
$\displaystyle
\lim_{\begin{subarray}{c} 
\det\xi\to 0\\ 
\xi\in\dom\h(x,\cdot) 
\end{subarray}}\h(x,\xi)=+\infty.
$
\end{proof}
\begin{theorem}\label{hyperhomog} Define $E_\eps: W^{1,\infty}(\Omega;\RR^d)\to [0,+\infty]$ by
\[
E_\eps(\varphi)=\left\{\begin{array}{cl}
\displaystyle\int_\Omega \h\left(\frac{x}{\eps},\nabla \varphi(x)\right)dx &\hbox{ if }\varphi\in l_I+W^{1,\infty}_0(\Omega;\RR^d)\\
+\infty & \hbox{ if }\varphi\in W^{1,\infty}(\Omega;\RR^d)\setminus \left(l_I+W^{1,\infty}_0(\Omega;\RR^d)\right).
\end{array}
\right.
\]
The functionals $\{E_\eps\}_{\eps>0}$ $\Gamma$-converges to $E_{\rm hom}$ with respect to $L^1(\Omega;\RR^d)$-convergence as $\eps\to 0$, where
\[
E_{\rm hom}(\varphi)=\left\{
\begin{array}{cl}
  \displaystyle\int_\Omega\h_{\rm hom}(\nabla \varphi(x))dx & \hbox{ if }\;\varphi\in l_I+W_0^{1,\infty}(\Omega;\RR^d)\\
  +\infty & \hbox{ if }\; \varphi\in W^{1,\infty}(\Omega;\RR^d)\setminus \left(l_I+W_0^{1,\infty}(\Omega;\RR^d)\right).
\end{array}
\right.
\]
and
\[
\h_{\rm hom}(\xi)=\left\{\begin{array}{cl}
\Z\H\h(\xi)& \hbox{ if }\xi\in B(I)\\ 
+\infty &\hbox{ otherwise.}
\end{array}
\right.
\]
Moreover $\h_{\rm hom}$ is continuous and satisfies
\[
B(I)=\dom \h_{\rm hom}\subset\MM^{d\times d}_+\quad\mbox{ and }\quad \lim_{\begin{subarray}{c} 
\det\xi\to 0\\ 
\xi\in\dom\h_{\rm hom} 
\end{subarray}}\h_{\rm hom}(\xi)=+\infty.
\]
\end{theorem}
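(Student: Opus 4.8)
The plan is to transfer the problem into the normalized framework of Theorems~\ref{main result 1bis} and~\ref{main result 2} by the translation $\varphi\mapsto\varphi-l_I$, which moves the constraint set so that the origin sits in its interior, and then to read off the homogenized density and prove its singular behaviour by comparison with one explicit convex minorant. First, a $Y$-periodic function that is in $L^\infty_{\rm loc}(\RR^d)$ is in fact in $L^\infty(\RR^d)$, so after fixing a representative we may assume $0<c\le A(x)\le\|A\|_\infty<+\infty$ for every $x$. Writing a competitor as $\varphi=l_I+u$ with $u\in W^{1,\infty}_0(\Omega;\RR^d)$ gives $\nabla\varphi=I+\nabla u$, hence $E_\eps(l_I+u)=\int_\Omega\widetilde\h(x/\eps,\nabla u(x))\,dx$ for $u\in W^{1,\infty}_0$ and $E_\eps=+\infty$ otherwise, where $\widetilde\h(x,\eta):=\h(x,\eta+I)=A(x)f(\eta+I)$. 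By Proposition~\ref{nim} (after translation) $\widetilde\h$ is a Carath\'eodory integrand, $Y$-periodic in $x$, with $\dom\widetilde\h(x,\cdot)=B(0)$ and bounded convex constraint set $B(0)\ni0$. Thus $E_\eps=J^{\widetilde\h}_\eps\circ(\,\cdot-l_I)$ with $J^{\widetilde\h}_\eps$ the Dirichlet functional of Theorem~\ref{main result 1bis} for $\widetilde\h$; since $u\mapsto l_I+u$ is an $L^1(\Omega;\RR^d)$-homeomorphism, $\Gamma$-convergence passes through it and it suffices to $\Gamma$-compute $\{J^{\widetilde\h}_\eps\}$.

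\emph{Hypotheses and identification of the limit.} (H$_1$) follows from the convexity of $h$: $h(t|\eta|)\le t\,h(|\eta|)+(1-t)h(0)$ gives $h(t|\eta|)-h(|\eta|)\le(1-t)h(0)$ uniformly for $\eta\in B(0)$, which combined with the uniform continuity of $g$ on $\overline{B}(I)$ and $A\le\|A\|_\infty$ yields the uniform radial estimate. (H$_2$) holds since $f=g+h(|\cdot-I|)$ is finite and continuous on $B(I)$; (H$_3$) holds since $\widetilde\h(x,\eta)\ge c\,h(|\eta|)\ge cC\big((1-|\eta|^\alpha)^{-1}-1\big)$ on $B(0)$, so $K_s=\{|\eta|\le\rho\}$ with $cC((1-\rho^\alpha)^{-1}-1)\ge s$ works. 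Theorem~\ref{main result 1bis} then gives $J^{\widetilde\h}_\eps\to J^{\widetilde\h}_{\rm hom}$ with density $\overline{\H\widetilde\h}$, and Theorem~\ref{main result 2}~\ref{mainresult24} gives $\overline{\H\widetilde\h}=\widehat{\Z\H\widetilde\h}$ taken with respect to $\overline{B}(0)$. Since $\H$ and $\Z$ commute with translation of the matrix argument, $\H\widetilde\h(\eta)=\H\h(\eta+I)$ and $\Z\H\widetilde\h(\eta)=\Z\H\h(\eta+I)$, so undoing the translation yields $E_\eps\to E_{\rm hom}$ with density $\h_{\rm hom}(\xi):=\overline{\H\widetilde\h}(\xi-I)$, equal to $\Z\H\h(\xi)$ on $B(I)$, to $+\infty$ off $\overline{B}(I)$, and to $\lim_{t\to1^-}\Z\H\h(t(\xi-I)+I)$ on $\partial B(I)$. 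Also $\Z\H\h(\xi)\le\H\h(\xi)\le\int_Y\h(x,\xi)\,dx=\big(\int_YA\big)f(\xi)<+\infty$ on $B(I)$, so $B(I)\subseteq\dom\h_{\rm hom}\subseteq\overline{B}(I)$.

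\emph{The singular behaviour and the main obstacle.} It remains to prove $\h_{\rm hom}\equiv+\infty$ on $\partial B(I)$ (so that $\dom\h_{\rm hom}=B(I)$ and $\h_{\rm hom}$ is exactly as stated) and that $\h_{\rm hom}(\xi)\to+\infty$ as $\det\xi\to0$ inside $B(I)$. This is the only non-routine point, because one must show that the singularity of $\h$ at $\partial B(I)$ is not destroyed by the two relaxation operations $\H$ (cell averaging) and $\Z$ (affine relaxation); the clean way is to exhibit an explicit convex minorant that is simultaneously a fixed point of $\H$ and dominated by its own $\Z$. Put $\beta:=\max(\alpha,1)$ and
\[
\Psi_0(\eta):=cC\big((1-|\eta|^{\beta})^{-1}-1\big)\ \text{ if }|\eta|<1,\qquad \Psi_0(\eta):=+\infty\ \text{ if }|\eta|\ge1 .
\]
Since $\beta\ge1$, $\eta\mapsto|\eta|^\beta$ is convex and $u\mapsto(1-u)^{-1}$ is convex nondecreasing on $[0,1)$, so $\Psi_0$ is convex and lower semicontinuous; and $\widetilde\h(x,\eta)\ge c\,h(|\eta|)\ge cC\big((1-|\eta|^\alpha)^{-1}-1\big)\ge\Psi_0(\eta)$ because $|\eta|^\alpha\ge|\eta|^\beta$ for $|\eta|\le1$. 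As $\int\nabla\phi=0$ for test functions vanishing on the boundary, Jensen's inequality gives $\H\Psi_0=\Psi_0$ and $\Z\Psi_0\ge\Psi_0$, whence $\Z\H\widetilde\h\ge\Z\H\Psi_0=\Z\Psi_0\ge\Psi_0$ by monotonicity of $\H$ and $\Z$. Thus $\Z\H\h(\xi)=\Z\H\widetilde\h(\xi-I)\ge cC\big((1-|\xi-I|^{\beta})^{-1}-1\big)$ for $\xi\in B(I)$; letting $|\xi-I|\to1$ this tends to $+\infty$, and using it along $t\to1^-$ in the formula for $\h_{\rm hom}$ on $\partial B(I)$ gives $\h_{\rm hom}\equiv+\infty$ there. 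Since $\det$ is continuous and strictly positive on $B(I)$ while any cluster point of a sequence in $B(I)$ with $\det\to0$ lies on $\partial B(I)$, such sequences have $|\xi-I|\to1$ and hence $\h_{\rm hom}\to+\infty$ along them. Finally $\inte(\dom\Z\H\h)=B(I)$ (as $B(I)\subseteq\dom\Z\H\h\subseteq\overline{B}(I)$), so $\h_{\rm hom}=\Z\H\h$ is continuous on $B(I)$ by Lemma~\ref{fonseca}, continuous into $[0,+\infty]$ at $\partial B(I)$ by the singular bound, and identically $+\infty$ off $\overline{B}(I)$; the inclusion $B(I)\subset\MM^{d\times d}_{+}$ was already recorded before Proposition~\ref{nim}.
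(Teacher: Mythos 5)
Your proof is correct and follows the paper's overall route: translate by $l_I$ so that the constraint set becomes $B(0)$ with $0$ in its interior, verify (H$_1$)--(H$_3$) for the translated integrand (your verification is essentially the paper's Lemma \ref{asswo}: convexity of $h$ for (H$_1$), boundedness on compacts for (H$_2$), the lower bound $cC((1-t^\alpha)^{-1}-1)$ for (H$_3$)), apply Theorem \ref{main result 1bis} together with Theorem \ref{main result 2}, and transfer the $\Gamma$-limit back through the $L^1$-homeomorphism $u\mapsto u+l_I$. Where you genuinely diverge is the identification of the limit density and its boundary behaviour: the paper invokes Corollary \ref{cor4}, whose hypothesis is the existence of a convex minorant $\psi\le\h_0$ with $\limsup_{[0,1[\ni t\to1}\psi(t\xi)=+\infty$ on $\partial B(0)$, but it never exhibits such a $\psi$ (the obvious candidate $cC\bigl((1-\vert\cdot\vert^{\alpha})^{-1}-1\bigr)$ need not be convex when $\alpha<1$, and $h(\vert\cdot\vert)$ need not be convex if $h$ is not nondecreasing). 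Your $\Psi_0$ with exponent $\beta=\max(\alpha,1)$ is precisely such a $\psi$, and instead of citing Corollary \ref{cor4} you derive its relevant consequences directly by Jensen's inequality and monotonicity of $\H$ and $\Z$: the radial limit $+\infty$ on $\partial B(0)$ (hence the stated formula for $\h_{\rm hom}$ without a boundary term), continuity on $B(I)$ via Lemma \ref{fonseca}, and a quantitative lower bound $\h_{\rm hom}(\xi)\ge cC\bigl((1-\vert\xi-I\vert^{\beta})^{-1}-1\bigr)$ which also replaces the paper's appeal to ``the same arguments as in Proposition \ref{nim}'' for the limit as $\det\xi\to0$. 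The paper's route buys brevity by quoting Corollary \ref{cor4}; yours buys an explicit, self-contained minorant that in fact supplies the verification the paper leaves implicit.
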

\begin{proof}
We need the following result for the proof of Theorem \ref{hyperhomog}.
\begin{lemma}\label{asswo} Let $\h_0:\Omega\times \MM^{d\times d}\to [0,+\infty]$ be defined by $\h_0(x,\xi)=\h(x,\xi+I)$ for all $(x,\xi)\in \Omega\times \MM^{m\times d}$. Then $\h_0$ satisfies {\rm (H$_1$)}, {\rm (H$_2$)} and {\rm (H$_3$)}.
\end{lemma}
\begin{proof}
Let us prove that $\h_0$ satisfies {\rm (H$_1$)}. We have $\dom\h_0(x,\cdot)=B(0)$ for all $x\in\RR^d$. Let $\eps>0$. Since $g$ is uniformly continuous on $\overline{B}(I)$, there exists $\delta>0$ such that for every $\xi\in B(0)$ and every $t\in [0,1[$
\begin{equation}\label{grusc}
1-t\le \delta \Longrightarrow g(t(\xi+I))\le g(\xi+I)+\eps.
\end{equation}
Set 
\[
\eta=\left\{\begin{array}{ll}
\min\{\delta,\frac{\eps}{h(0)}\}&\mbox{ if }h(0)>0\\
\delta&\mbox{ if }h(0)=0.
\end{array}
\right.
\]

Since $h$ is convex, for every $\xi\in B(0)$ and every $t\in[0,1[$ such that $1-t\le \eta$
\begin{equation}\label{hrusc}
h(\vert t\xi\vert)\le t h(\vert \xi\vert)+(1-t)h(0)\le h(\vert\xi\vert)+\eps.
\end{equation}
Using \eqref{grusc} and \eqref{hrusc}, $\h_0$ satisfies {\rm (H$_1$)}.

Let $K\subset B(0)$ be a compact. The function $\h_0$ satisfies {\rm (H$_2$)}, since $g(\cdot+I)$ is bounded on $K$, $h(\vert\cdot\vert)$ is convex and finite then bounded on $K$.

Let $\{t_n\}_{n\ge 1}\subset [0,1[$ be such that $t_n\to 1$ as $n\to +\infty$. It holds 
\[
\inf_{\xi\in B(0)\setminus t_n \overline{B}(0)}\h_0(x,\xi)\ge cC\left(\inf_{\xi\in B(0)\setminus t_n \overline{B}(0)}\frac{1}{1-\vert\xi\vert^\alpha}-1\right)\ge cC\left(\frac{1}{1-t_n^\alpha}-1\right).
\]
Since $\lim_{n\to+\infty}(1-t_n^\alpha)^{-1}=+\infty$, it is easy to find a subsequence $\{t_{\sigma(n)}\}_{n\ge 1}\subset [0,1[$ such that for every $n\ge 1$
\[
\frac{1}{1-t_{\sigma(n)}^\alpha}\ge \frac{n}{cC}+1,
\]
thus, by taking account of Lemma \ref{decomposition} \ref{decomposition5}, {\rm (H$_3$)} holds.
\end{proof}
By translation, $E_\eps(\cdot)=E_\eps^0(\cdot+l_I)$ for all $\eps>0$, where
\[
E_\eps^0(u)=\left\{
\begin{array}{cl}
  \displaystyle\int_\Omega\h_0\left(\frac{x}{\eps},\nabla u(x)\right)dx & \hbox{ if }\;u\in W_0^{1,\infty}(\Omega;\RR^d)\\
  +\infty & \hbox{ if }\; u\in W^{1,\infty}(\Omega;\RR^d)\setminus W_0^{1,\infty}(\Omega;\RR^d).
\end{array}
\right.
\]
Using Lemma \ref{asswo} together with Theorem \ref{main result 1bis}, we obtain 
\[
\Gamma\mbox{-}\lim_{\eps\to 0}E_\eps(\cdot)=\Gamma\mbox{-}\lim_{\eps\to 0}E_\eps^0(\cdot+l_I)=E^{0}_{\rm hom}(\cdot+l_I).
\]
Set $E_{{\rm hom}}(\cdot)=E^{0}_{\rm hom}(\cdot+l_I)$. By Corollary \ref{cor4}, $\h_{\rm hom}$ is quasiconvex, continuous and for every $\xi\in\MM^{d\times d}$
\[
\h_{\rm hom}(\xi)=\h_{0,{\rm hom}}(\xi-I)=\left\{
\begin{array}{cl}
{\Z\H\h_0}(\xi-I)&\mbox{ if }\xi\in B(I)\\
+\infty&\mbox{ if }\xi\notin B(I)
\end{array}
\right.
\]
Using the same arguments as in the proof of Proposition \ref{nim}, we obtain
\[
\lim_{\begin{subarray}{c} 
\det\xi\to 0\\ 
\xi\in \dom\h_{\rm hom} 
\end{subarray}}\h_{\rm hom}(\xi)=+\infty\quad\mbox{ and }\quad\dom \h_{\rm hom}\subset\MM^{d\times d}_+.
\]
\end{proof}
\begin{remark} Let $\{\varphi_\eps\}_{\eps>0}\subset W^{1,\infty}(\Omega;\RR^m)$ be a sequence satisfying~$\sup_{\eps>0}I_\eps(\varphi_\eps)<+\infty$. Using similar arguments as Subsection \ref{gliminf} (take Remarks \ref{notconnected} into account if $\Omega$ is not connected), it is easy to deduce that there exists a subsequence such that $\varphi_\eps\to \varphi$ in $L^1(\Omega;\RR^m)$. By Theorem  \ref{hyperhomog}, it follows that $\varphi\in l_I+W^{1,\infty}_0(\Omega;\RR^m)$ is a minimizer of $I_{\rm hom}$ and $\det\nabla\varphi>0$ a.e. in $\Omega$. (From \cite[Proposition 3.6]{pourciau83}, we also have that $\varphi:\overline{\Omega}\to \varphi(\overline{\Omega})$ is an homeomorphism.) 
\end{remark}
\section{Semi-homogenization of integrals with linear growth in $W^{1,\infty}(\Omega;\RR^m)$}\label{semihom} Here the goal is to prove $\Gamma$-convergence of periodic integrals with linear growth in $W^{1,\infty}(\Omega;\RR^m)$ with respect to weak$^\ast$ convergence in $W^{1,\infty}(\Omega;\RR^m)$. Since linear growth on the integrands are not compatible with the natural coercivity assumptions in $W^{1,\infty}(\Omega;\RR^m)$ (the integrands should be infinite outside a bounded subset of $\MM^{m\times d}$ to expect to have compactness of $\eps$-minimizing sequences) we have a non complete $\Gamma$-convergence result. 

For any $F\in\MM^{m\times d}$, we denote by $l_F$ the linear function given by $l_F(x)=Fx$ for all $x\in\RR^d$.
Let $\Omega\subset\RR^d$ be a bounded open set. Let $f:\RR^d\times\MM^{m\times d}\to [0,+\infty]$ be a Borel measurable function. Consider the following assertions:
\begin{enumerate}[label={\rm (\roman{*})}, ref={\rm (\roman{*})}]
\item\label{br1} $\RR^{d}\ni x\mapsto f(x,\xi)$ is $Y$-periodic for all $\xi\in\MM^{m\times d}$;
\item\label{br2} there exists $c>0$ such that $f(x,\xi)\le c(1+\vert\xi\vert)$ for all $(x,\xi)\in \RR^d\times\MM^{m\times d}$.
\end{enumerate}
For any $r\in [0,+\infty]$, define $G_r:W^{1,\infty}(\Omega;\RR^m)\to [0,+\infty]$ by
\[
G_r(u)=\inf\left\{\liminf_{\eps\to 0}\int_\Omega f\left(\frac{x}{\eps},\nabla u_\eps\right)dx:u_\eps\stackrel{\ast}{\wto} u\mbox{ in }W^{1,\infty}\mbox{ and }\sup_{\eps>0}\Vert\nabla u_\eps\Vert_\infty\le r\right\}.
\]
For any $r\in [0,+\infty[$, define $H_r:W^{1,\infty}(\Omega;\RR^m)\to [0,+\infty]$ by
\[
H_r(u)=\inf\left\{\limsup_{\eps\to 0}\int_\Omega f\left(\frac{x}{\eps},\nabla u_\eps\right)dx:u_\eps\stackrel{\ast}{\wto} u\mbox{ in }W^{1,\infty}\mbox{ and }\sup_{\eps>0}\Vert\nabla u_\eps\Vert_\infty\le r\right\}.
\]
For any $r\in [0,+\infty]$ and every $\xi\in\MM^{m\times d}$ set
\[
\H_rf(\xi)=\inf_{n\ge 1}\frac{1}{n^d}\inf\left\{\int_{nY} f(x,\xi+\nabla \phi(x))dx:\phi\in W^{1,\infty}_0(nY;\RR^m),\;\Vert\nabla\phi\Vert_\infty\le r\right\}.
\]
Note that $\H_\infty f=\H f$.
\begin{theorem}\label{BraidesTheorem} Assume that \ref{br1} and \ref{br2} hold. For every $u\in W^{1,\infty}(\Omega;\RR^m)$ and every $r\in [0,+\infty]$
\begin{equation}\label{braides1}
G_r(u)\ge \int_\Omega \H f(\nabla u(x))dx;
\end{equation}
For every $u\in \Aff(\Omega;\RR^m)$ and every $r\in [0,+\infty[$
\begin{equation}\label{braides2}
H_{r+\Vert\nabla u\Vert_\infty}(u)\le\int_\Omega  \H_r f(\nabla u(x))dx.
\end{equation}
\end{theorem}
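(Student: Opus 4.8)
The statement is a two-sided estimate, so I would handle the liminf inequality \eqref{braides1} and the limsup inequality \eqref{braides2} separately, since they have quite different flavors.

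\emph{The lower bound \eqref{braides1}.} Let $u \in W^{1,\infty}(\Omega;\RR^m)$, fix $r \in [0,+\infty]$, and take any sequence $u_\eps \stackrel{\ast}{\wto} u$ in $W^{1,\infty}$ with $\sup_\eps \|\nabla u_\eps\|_\infty \le r$. The plan is to show $\liminf_\eps \int_\Omega f(x/\eps,\nabla u_\eps)\,dx \ge \int_\Omega \H f(\nabla u)\,dx$ and then take the infimum over such sequences. Define $f_\eps(x,\xi) := f(x/\eps,\xi)$; by the periodicity \ref{br1}, the standard multiscale Young measure / two-scale argument (or a blow-up argument à la Fonseca–Müller) localizes the problem: around a Lebesgue point $x_0$ of $\nabla u$, rescaling at scale $\eps$ transforms the local contribution into a cell-problem-type infimum over $W^{1,\infty}_0(nY;\RR^m)$ for large $n$, which is exactly $\H f(\nabla u(x_0))$ after passing $n \to \infty$. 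Concretely, I would either invoke a known $\Gamma$-liminf result for periodic integrands of linear growth, adapting it to the $W^{1,\infty}$ weak-$\ast$ topology (which only makes the test functions more restricted, hence the inequality easier), or reprove it: cover $\Omega$ (up to small measure) by disjoint cubes on which $u$ is nearly affine with slope $\xi_i \approx \nabla u(x_0^i)$, use De Giorgi–type slicing to modify $u_\eps$ to match affine boundary data $l_{\xi_i}$ on each cube at negligible energy cost (here linear growth \ref{br2} is used to control the error of the slicing/cut-off, since $|f| \le c(1+|\xi|)$ makes the transition-layer energy tend to $0$ with the layer measure), and then bound each cube's energy below by the definition of $\H f$. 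Summing and letting the cube size go to $0$ gives \eqref{braides1}.

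\emph{The upper bound \eqref{braides2}.} Let $u \in \Aff(\Omega;\RR^m)$, so $\Omega = \bigcup_{i \in I} \Omega_i$ (up to null set) with $\nabla u \equiv \xi_i$ on $\Omega_i$, and fix $r \in [0,+\infty[$. On each piece $\Omega_i$ the target value is $\H_r f(\xi_i) = \inf_n \frac{1}{n^d}\inf\{\int_{nY} f(x,\xi_i + \nabla\phi)\,dx : \phi \in W^{1,\infty}_0(nY;\RR^m),\ \|\nabla\phi\|_\infty \le r\}$. Fix $\delta > 0$; choose $n_i$ and a test function $\phi_i \in W^{1,\infty}_0(n_iY;\RR^m)$ with $\|\nabla\phi_i\|_\infty \le r$ realizing the double infimum up to $\delta$. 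Extend $\phi_i$ by $n_iY$-periodicity to $\RR^d$ and set $\phi_i^\eps(x) := \eps\,\phi_i(x/\eps)$; then $\phi_i^\eps \stackrel{\ast}{\wto} 0$ in $W^{1,\infty}(\Omega_i;\RR^m)$, $\|\nabla\phi_i^\eps\|_\infty \le r$, and by the Riemann–Lebesgue lemma (periodic averaging) $\int_{\Omega_i} f(x/\eps, \xi_i + \nabla\phi_i^\eps)\,dx \to |\Omega_i|\,\frac{1}{n_i^d}\int_{n_iY} f(x,\xi_i + \nabla\phi_i)\,dx \le |\Omega_i|(\H_r f(\xi_i) + \delta)$. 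The recovery sequence is $u_\eps := u + \sum_i \phi_i^\eps \mathbf{1}_{\Omega_i}$, suitably corrected near the interfaces $\partial\Omega_i$ by a cut-off so that $u_\eps$ is globally in $W^{1,\infty}(\Omega;\RR^m)$ with $u_\eps \stackrel{\ast}{\wto} u$. The cut-off is supported in a thin neighborhood of $\bigcup_i \partial\Omega_i$ of measure $O(\rho)$; on it the corrected gradient is bounded by $\|\nabla u\|_\infty + r + O(1)$ (one factor of $1/\rho$ from differentiating the cut-off times the $O(\rho)$ size of $\phi_i^\eps$, which is $O(\eps)$, so this term is actually $O(\eps/\rho)$ and harmless), while the energy on it is $O(\rho)$ by linear growth. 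Letting $\eps \to 0$ then $\rho \to 0$ then $\delta \to 0$ yields $H_{r'}(u) \le \int_\Omega \H_r f(\nabla u)\,dx$ for any $r' > r + \|\nabla u\|_\infty$; a final diagonal argument pushes $r'$ down to $r + \|\nabla u\|_\infty$, which is why the left side of \eqref{braides2} carries that precise bound on the gradients.

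\emph{Main obstacle.} The delicate point is the boundary-layer / gluing construction in both parts: in \eqref{braides1} one must glue $u_\eps$ to affine data without spoiling the constraint $\|\nabla u_\eps\|_\infty \le r$ (De Giorgi slicing keeps the gradient essentially bounded by $\max$ of the two pieces, so one stays $\le r$ up to an error absorbed by relaxing $r$ slightly — or one argues the estimate is monotone and continuous in $r$), and in \eqref{braides2} one must keep track of exactly how much the interface cut-off inflates the sup-norm of the gradient, which is precisely the origin of the shift from $r$ to $r + \|\nabla u\|_\infty$ in the subscript. Getting these gradient bookkeeping estimates sharp — rather than merely qualitative — is where the real work lies; the energy estimates themselves are routine consequences of the linear growth \ref{br2} and periodic homogenization (Riemann–Lebesgue).
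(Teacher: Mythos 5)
Your treatment of the lower bound \eqref{braides1} is essentially the paper's own route (localization/blow-up with a cut-off gluing to the affine blow-up limit, linear growth controlling the transition layer), and it is sound; two small remarks. First, your worry about preserving the constraint $\Vert\nabla u_\eps\Vert_\infty\le r$ in the gluing is unnecessary: \eqref{braides1} compares with the \emph{unconstrained} $\H f=\H_\infty f$, so after gluing one only needs an arbitrary $W^{1,\infty}_0$ test function, and it suffices to prove the inequality for $G_\infty$ since $G_r\ge G_\infty$. Second, the step ``the rescaled local contribution is exactly $\H f(\nabla u(x_0))$'' hides the fact that the blown-up domains are not cubes $nY$ with the right alignment; one needs the subadditivity argument identifying the asymptotic cell formula on blown-up domains with $\H f$ (the paper cites this), which you should at least invoke explicitly.

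The genuine gap is in \eqref{braides2}. Your recovery sequence $u_\eps=u+\sum_i\eta_\rho\,\phi_i^\eps$ satisfies only $\Vert\nabla u_\eps\Vert_\infty\le \Vert\nabla u\Vert_\infty+r+C\eps/\rho$, which is strictly larger than $r+\Vert\nabla u\Vert_\infty$ for \emph{every} $\eps>0$, so the sequence is admissible only for $H_{r'}$ with $r'>r+\Vert\nabla u\Vert_\infty$. The ``final diagonal argument'' cannot close this: the definition of $H_{r+\Vert\nabla u\Vert_\infty}$ requires $\sup_{\eps>0}\Vert\nabla u_\eps\Vert_\infty\le r+\Vert\nabla u\Vert_\infty$ for the whole family, any diagonal sequence still contains terms violating that bound, and the monotonicity $H_{r'}\le H_{r+\Vert\nabla u\Vert_\infty}$ for $r'>r+\Vert\nabla u\Vert_\infty$ goes the wrong way, so knowing $H_{r'}(u)\le\int_\Omega\H_rf(\nabla u)\,dx$ for all $r'>r+\Vert\nabla u\Vert_\infty$ gives nothing about $H_{r+\Vert\nabla u\Vert_\infty}(u)$. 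Nor can you repair it by shrinking or truncating gradients, since $f$ is only Borel measurable in $\xi$ (no continuity is assumed), so small perturbations of the gradient are not energy-controlled. The paper (following M\"uller) avoids the cut-off altogether: on each affine piece $V_i$ one inserts $\eps\phi_i(\cdot/\eps)$ only on the $\eps n_i$-cells \emph{entirely contained} in $V_i$ and keeps the affine function on the uncovered layer $V_i\setminus V_{i,\eps}$; because $\phi_i\in W^{1,\infty}_0(n_iY;\RR^m)$ vanishes on cell boundaries, the modified function lies in $l_{\xi_i}+W^{1,\infty}_0(V_i;\RR^m)$ with the \emph{exact} bound $\vert\xi_i\vert+r$, the layer's energy vanishes with its measure by the linear growth \ref{br2}, and patching the pieces (each corrector has zero boundary values on $V_i$) gives a sequence with $\sup_\eps\Vert\nabla u_\eps\Vert_\infty\le\Vert\nabla u\Vert_\infty+r$, which is exactly what \eqref{braides2} asserts and what its later application (the proof of quasiconvexity of $\H f$, where $G_s$ with $s=r+\Vert\xi\Vert+\Vert\nabla\phi\Vert_\infty$ is compared with $H_{r+\Vert\nabla u_n\Vert_\infty}$) requires.
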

{\noindent{\em Proof of \eqref{braides1}.}} It is enough to prove $G_\infty(u)\ge \int_\Omega \H f(\nabla u(x))dx$ for all $u\in W^{1,\infty}(\Omega;\RR^m)$.

We use the technique of localization and blow-up (see for instance \cite{alvarez-jpm04}). Let $\{\eps\}_{\eps>0}\subset ]0,+\infty[$ be such that $\lim_{\eps\to 0}\eps=0$. Let $\{u_\eps\}_{\eps>0}\subset W^{1,\infty}(\Omega;\RR^m)$ be such that $u_\eps\stackrel{\ast}{\wto} u$ in $W^{1,\infty}(\Omega;\RR^m)$. By compactness imbedding theorem, for a subsequence (not relabeled)
\begin{equation}\label{convunif}
\sup_{\eps>0}\|\nabla u_\eps\|_\infty<+\infty\;\;\mbox{ and }\;\lim_{\eps\to 0}\|u_\eps-u\|_\infty=0.
\end{equation}
Without loss of generality, we can assume \[\liminf_{\eps\to 0}\int_\Omega f\left(\frac{x}{\eps},\nabla u_\eps(x)\right)dx=\lim_{\eps\to 0}\int_\Omega f\left(\frac{x}{\eps},\nabla u_\eps(x)\right)dx<+\infty.\] 
Let $\{\mu_\eps\}_{\eps>0}$ be a sequence of nonnegative Borel measures defined by \[\mu_\eps=f\left(\frac{\cdot}{\eps},\nabla u_\eps(\cdot)\right)dx{\lfloor_{\Omega}}\] where $dx{\lfloor_{\Omega}}$ is the Lebesgue measure on $\Omega$. We have $\sup_{\eps>0}\mu_\eps(\Omega)<+\infty$ and then there exists a subsequence (not relabeled) such that $\mu_\eps\wto \mu$ weak$^\ast$ in the sense of measures. By the Radon-Nikodym decomposition $\mu=gdx{\lfloor_{\Omega}}+\mu_s$ where $g\in L^{1}(\Omega)$ and $\mu_s$ is a nonnegative Borel measure singular with respect $dx{\lfloor_{\Omega}}$. We claim that it is enough to prove $g(\cdot)\ge\H f(\nabla u(\cdot))$ a.e. in $\Omega$, indeed by using Alexandrov theorem
\begin{align*}
\liminf_{\eps\to 0}\int_\Omega f\left(\frac{x}{\eps},\nabla u_\eps(x)\right)dx=\liminf_{\eps\to 0}\mu_\eps(\Omega)&\ge \mu(\Omega)\\ &\ge\int_\Omega g(x)dx\ge \int_\Omega \H f(\nabla u(x))dx.
\end{align*}
Let $B_\rho(x)=x+\rho Y$ be the ball with center $x\in\Omega$ and radius $\rho>0$. By differentiation of measures we have
\begin{equation}\label{diffmeas}
g(x)=\lim_{\rho\to 0}\frac{\mu(B_\rho(x))}{\vert B_\rho(x)\vert}\mbox{ a.e. in }\Omega.
\end{equation}
Fix $x_0\in\Omega$ such that \eqref{diffmeas} is satisfied. By Alexandrov theorem we  have 
\begin{equation}\label{diffmeas1}
g(x_0)=\lim_{\rho\to 0}\lim_{\eps\to 0}\frac{1}{\vert B_\rho(x_0)\vert}\int_{B_\rho(x_0)} f\left(\frac{x}{\eps},\nabla u_\eps(x)\right)dx.
\end{equation}
Let $\rho>0$. Let $u_0\in\Aff(B_\rho(x_0);\RR^m)$ be given by $u_0(x)=u(x_0)+l_{\nabla u(x_0)}(x-x_0)$. Let $\delta\in ]0,1/2[$. Let $u_\eps^\delta=\phi_\delta u_\eps+(1-\phi_\delta)u_0\in l_{\nabla u(x_0)}+W^{1,\infty}_0(B_\rho(x_0);\RR^m)$ where $\phi_\delta\in C_c^\infty(B_\rho(x_0))$ satisfies $\phi_\delta=1$ on $B_{\delta\rho}(x_0)$, $\phi_\delta=0$ on $B_{\rho}(x_0)\setminus B_{2\delta\rho}(x_0)$ and $\|\nabla\phi_\delta\|_\infty\le \frac{c^\prime}{\delta\rho}$ for some $c^\prime>0$ (which does not depend on $\delta$ and $\rho$). By an easy computation we have for every $\eps>0$
\begin{align*}
\int_{B_\rho(x_0)} f\left(\frac{x}{\eps},\nabla u_\eps^\delta(x)\right)dx&\le c\vert B_{2\delta\rho}(x_0)\setminus B_{\delta\rho}(x_0)\vert\left(1+\sup_{\eps>0}\|\nabla u_\eps\|_\infty+\vert\nabla u(x_0)\vert\right)\\&+cc^\prime\vert B_{2\delta\rho}(x_0)\setminus B_{\delta\rho}(x_0)\vert\|u_\eps-u_0\|_\infty\\&+c\vert B_{\rho}(x_0)\setminus B_{2\delta\rho}(x_0)\vert(1+\vert\nabla u(x_0)\vert)\\&+\int_{B_\rho(x_0)} f\left(\frac{x}{\eps},\nabla u_\eps(x)\right)dx.
\end{align*}
Taking into account \eqref{convunif}
\begin{align}
\label{form1}\liminf_{\rho\to 0}\liminf_{\delta\to 1}\liminf_{\eps\to 0}&\frac{1}{\vert B_\rho(x_0)\vert}\int_{B_\rho(x_0)} f\left(\frac{x}{\eps},\nabla u_\eps^\delta(x)\right)dx\\&\le \lim_{\rho\to 0}\lim_{\eps\to 0}\frac{1}{B_\rho(x_0)}\int_{B_\rho(x_0)} f\left(\frac{x}{\eps},\nabla u_\eps(x)\right)dx=g(x_0)\nonumber.
\end{align}
Using subadditive arguments (see for instance \cite[Appendix B]{alvarez-jpm02})
\begin{align*}
\H f(\nabla u(x_0))
&=\lim_{\eps\to 0}\inf_{\phi\in W^{1,\infty}_0\left(\frac{1}{\eps}B_\rho(x_0);\RR^m\right)}\frac{1}{\vert\frac{1}{\eps}B_\rho(x_0)\vert}\int_{\frac{1}{\eps}B_\rho(x_0)} f(x,\nabla u(x_0)+\nabla \phi(x))dx\\
&\le\liminf_{\eps\to 0}\frac{1}{\vert B_\rho(x_0)\vert}\int_{B_\rho(x_0)} f\left(\frac{x}{\eps},\nabla u_\eps^\delta(x)\right)dx
\end{align*}
We deduce by taking account of \eqref{form1} that $g(x_0)\ge \H f(\nabla u(x_0))$.\\

{\noindent{\em Proof of \eqref{braides2}.}} To have \eqref{braides2}, apply Lemma \ref{homog lemma} below with $\mathfrak D=\MM^{m\times d}$, $U=\Omega$ and $j=f$.\\
The following lemma is extracted from \cite[p.195]{muller87}. 
\begin{lemma}\label{homog lemma} Let $\mathfrak{D}\subset \MM^{m\times d}$ be a set. Let $j:\Omega\times\MM^{m\times d}\to [0,+\infty]$ be a Borel measurable function, $Y$-periodic with respect to its first variable and satisfying $j(\cdot,\xi)\in L^\infty_{\rm loc}(\RR^d)$ for all $\xi\in \mathfrak{D}$. Let $U\subset\Omega$ be an open set. Let $w\in\Aff(U;\RR^m)$ be such that $\nabla w(x)\in\mathfrak{D}$ for almost all $x\in U$. Let $r\in [0,+\infty[$. There exists $\{w_\eps\}_{\eps>0}\subset w+W^{1,\infty}_0(U;\RR^m)$ such that $w_\eps\stackrel{\ast}{\wto} w$ in $W^{1,\infty}(\Omega;\RR^m)$, $\sup_{\eps>0}\Vert\nabla w_\eps\Vert_\infty\le \Vert\nabla w\Vert_\infty+r$, and 
\[
\limsup_{\eps\to 0}\int_U j\left(\frac{x}{\eps},\nabla w_\eps(x)\right)dx\le \int_U\H_r j(\nabla w(x))dx.
\]
Moreover, if $\dom j(x,\cdot)\subset B_R(0)$ for all $x\in \RR^d$ and some $R>0$ then there exists a subsequence (not relabeled) $\{w_\eps\}_{\eps>0}\subset w+W^{1,\infty}_0(U;\RR^m)$ such that $w_\eps\to w$ in $L^{1}(\Omega;\RR^m)$, and 
\[
\limsup_{\eps\to 0}\int_U j\left(\frac{x}{\eps},\nabla w_\eps(x)\right)dx\le \int_U\H j(\nabla w(x))dx.
\]
\end{lemma}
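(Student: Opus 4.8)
The plan is to reduce the statement to the affine pieces of $w$ and, on each such piece, paste together rescaled copies of a near-optimal periodic corrector, exploiting that the periodic extension of a corrector vanishing on $\partial(nY)$ vanishes on the whole translated-cell grid — this last fact is what will let us keep the Lipschitz bound exactly. First I would write $w$ on $U$ as in the definition of $\Aff(U;\RR^m)$: a finite family $\{U_i\}_{i\in I}$ of disjoint open sets with $|\partial U_i|=0$, $|U\setminus\bigcup_i U_i|=0$ and $\nabla w\equiv\xi_i$ on $U_i$; discarding null pieces we may assume $|U_i|>0$, so $\xi_i\in\mathfrak D$ (since $\nabla w\in\mathfrak D$ a.e.) and hence $j(\cdot,\xi_i)\in L^\infty_{\rm loc}(\RR^d)$, which being $Y$-periodic gives $M_i:=\|j(\cdot,\xi_i)\|_{L^\infty(\RR^d)}<+\infty$. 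If $\H_r j(\xi_i)=+\infty$ for some $i$, then $\int_U\H_r j(\nabla w)\,dx=+\infty$ and the first inequality holds trivially with $w_\eps=w$ (extended to a fixed element of $W^{1,\infty}(\Omega;\RR^m)$ when $U\subsetneq\Omega$), so from now on all the $\H_r j(\xi_i)$ are finite. Fixing $\delta>0$, for each $i$ the definition of $\H_r j(\xi_i)$ gives $n_i\ge1$ and $\phi_i\in W^{1,\infty}_0(n_iY;\RR^m)$ with $\|\nabla\phi_i\|_\infty\le r$ and
\[
\frac{1}{n_i^d}\int_{n_iY}j\bigl(y,\xi_i+\nabla\phi_i(y)\bigr)\,dy\le\H_r j(\xi_i)+\delta;
\]
I then let $\widetilde\phi_i$ be the extension of $\phi_i$ of period $n_i$ in each coordinate, so $\widetilde\phi_i$ vanishes on $n_i\partial Y+n_i\mathbb Z^d$ and $\|\nabla\widetilde\phi_i\|_\infty\le r$.

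Next I would build $w_\eps$. For $\eps>0$ let $U_i^\eps$ be the union of those cubes $\eps n_i(k+Y)$, $k\in\mathbb Z^d$, whose closure lies in $U_i$; since $U_i\setminus U_i^\eps$ sits in an $O(\eps)$-neighbourhood of $\partial U_i$, one has $|U_i\setminus U_i^\eps|\to0$. Set $w_\eps=w+\eps\,\widetilde\phi_i(\cdot/\eps)$ on $U_i^\eps$ (on $U_i$, $w$ is affine with gradient $\xi_i$) and $w_\eps=w$ on $U\setminus\bigcup_iU_i^\eps$. As $\widetilde\phi_i(x/\eps)=0$ on the boundary of each cube composing $U_i^\eps$, $w_\eps$ is continuous, and as each such cube is compactly contained in $U_i$, $w_\eps=w$ near $\partial U$, i.e.\ $w_\eps\in w+W^{1,\infty}_0(U;\RR^m)$. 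On $U_i^\eps$ we get $\nabla w_\eps=\xi_i+\nabla\widetilde\phi_i(\cdot/\eps)$, hence $\|\nabla w_\eps\|_\infty\le\|\nabla w\|_\infty+r$, while $\|w_\eps-w\|_\infty\le\eps\max_i\|\widetilde\phi_i\|_\infty\to0$; in particular $w_\eps\swto w$ in $W^{1,\infty}$. For the energy I would split $\int_U j(x/\eps,\nabla w_\eps)\,dx$ into the parts over $\bigcup_iU_i^\eps$ and over the remainder: on the remainder $\nabla w_\eps=\nabla w=\xi_i$ on $U_i\setminus U_i^\eps$, so it is $\le\sum_i M_i\,|U_i\setminus U_i^\eps|\to0$; on each $U_i^\eps$, the map $y\mapsto j(y,\xi_i+\nabla\widetilde\phi_i(y))$ is $n_i$-periodic, so a cube-by-cube change of variables $y=x/\eps$ gives
\[
\int_{U_i^\eps}j\bigl(x/\eps,\xi_i+\nabla\widetilde\phi_i(x/\eps)\bigr)\,dx=\frac{|U_i^\eps|}{n_i^d}\int_{n_iY}j\bigl(y,\xi_i+\nabla\phi_i(y)\bigr)\,dy\le|U_i^\eps|\bigl(\H_r j(\xi_i)+\delta\bigr).
\]
Letting $\eps\to0$ and summing over the finite set $I$ yields $\limsup_\eps\int_U j(x/\eps,\nabla w_\eps)\,dx\le\int_U\H_r j(\nabla w)\,dx+\delta|U|$, and a standard diagonalization over $\delta=1/k\to0$ then produces one sequence $\{w_\eps\}$ with all the required properties and the sharp bound with $\H_r j$.

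For the final (``moreover'') assertion I would repeat the construction with $\H j=\H_\infty j$ in place of $\H_r j$ (the case $\H j(\xi_i)=+\infty$ being trivial as above). The key observation is that under $\dom j(x,\cdot)\subset B_R(0)$, any corrector $\phi_i$ with $\int_{n_iY}j(y,\xi_i+\nabla\phi_i(y))\,dy<+\infty$ forces $\xi_i+\nabla\phi_i(y)\in\dom j(y,\cdot)\subset B_R(0)$ for a.e.\ $y$, hence $\|\nabla\phi_i\|_\infty\le R+|\xi_i|$ automatically; so near-optimal correctors for $\H j(\xi_i)$ already carry a gradient bound, the construction goes through with $\sup_{\eps}\|\nabla w_\eps\|_\infty\le 2\|\nabla w\|_\infty+R<+\infty$ and $\|w_\eps-w\|_\infty\to0$, hence $w_\eps\to w$ uniformly and a fortiori in $L^1(\Omega;\RR^m)$, and the estimate sharpens to $\limsup_\eps\int_U j(x/\eps,\nabla w_\eps)\,dx\le\int_U\H j(\nabla w)\,dx$. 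The step I expect to be the real obstacle is exactly the matching of the oscillating corrector with $w$ across the interfaces $\partial U_i$ without spoiling the bound $\|\nabla w\|_\infty+r$: I would avoid cut-off functions entirely and use the vanishing of $\widetilde\phi_i$ on the translated-cell grid, so that the rescaled corrector is simply switched off on the thin layer $U_i\setminus U_i^\eps$; a cut-off of width $\sim\eps$ would instead add a gradient term of order $1$ on a boundary layer and destroy the exact Lipschitz estimate.
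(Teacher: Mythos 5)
Your proposal is correct and follows essentially the same route as the paper: near-optimal periodic correctors for $\H_r j(\xi_i)$ pasted cell-by-cell inside each affine piece (using that the corrector vanishes on the cell grid, so no cut-off is needed), the boundary layer controlled by the periodic $L^\infty$ bound on $j(\cdot,\xi_i)$ together with $|U_i\setminus U_i^\eps|\to 0$, and a final diagonalization. The only differences are cosmetic: the paper treats one gradient at a time and then glues, and for the ``moreover'' part it invokes $\H_R j=\H j$ plus compact embedding for $L^1$ convergence, whereas you observe the sharper automatic bound $\Vert\nabla\phi_i\Vert_\infty\le R+\vert\xi_i\vert$ and get uniform convergence directly.
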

\begin{proof} Let $r\in [0,+\infty[$. Assume that $w$ is such that $\nabla w=\xi$ a.e. in $V$, where $\xi\in{\mathfrak D}$ and $V\subset\Omega$ is an open set. Let $s\in\NN^*$. There exists $n_s\in\NN^*$ and $\phi_s\in W^{1,\infty}_0(nY;\RR^m)$ with $\Vert\nabla \phi_s\Vert_\infty\le r$, be such that
\[
\frac{1}{n_s^d}\int_{nY} j(x,\xi+\nabla\phi_s(x))dx\le\H_r j(\xi)+\frac{1}{s}.
\]
Let $]0,+\infty[\ni\eps_l\to 0$ as $l\to +\infty$. Consider $V_{l,s}=\cup\{\eps_l(z+ n_sY):z\in\mathbb{Z}^d,\;\eps_l(z+ n_sY)\subset V\}$ and define
\[
w_{l,s}(x)=\left\{
\begin{array}{cl}
\displaystyle\xi x+\eps_l\phi_s\left(\frac{x}{\eps_l}\right)  &  \mbox{ if }x\in V_{l,s}    \\
 \xi x &     \mbox{ if }x\in V\setminus V_{l,s}
\end{array}
\right.
\]
where we have denoted again by $\phi_s$ the $n_s Y$-periodic extension in $\RR^d$ of $\phi_s$. We have $w_{l,s}\in l_\xi+W_0^{1,\infty}(V;\RR^m)$, $w_{l,s}\stackrel{\ast}{\wto} \xi$ in $W^{1,\infty}(V;\RR^m)$ as $l\to +\infty$, and
\begin{equation}\label{boundlocal}
\sup_{l\ge 1}\Vert \nabla w_{l,s}\Vert_\infty\le \Vert\xi\Vert+r.
\end{equation}
Let a cube $\eps(z+ n_sY)\subset V_{l,s}$, by a change of variable and periodicity
\begin{align*}
\int_{\eps(z+ n_sY)} j\left(\frac{x}{\eps_l}, \nabla w_{l,s}(x)\right)dx&=\eps^d\int_{n_sY} j(x,\xi+\nabla\phi_s(x))dx\\
&=\vert \eps_l(z+n_sY)\vert\frac{1}{n_s^d}\int_{n_sY}  j(x,\xi+\nabla\phi_s(x))dx\\
&\le\vert \eps_l(z+n_sY)\vert \left(\H_r j(\xi)+\frac{1}{s}\right).
\end{align*}
It follows that 
\[
\int_{V_{l,s}} j\left(\frac{x}{\eps_l}, \nabla w_{l,s}(x)\right)dx\le \vert V_{l,s}\vert \left(\H_r j(\xi)+\frac{1}{s}\right).
\]
Then 
\[
\int_{V} j\left(\frac{x}{\eps_l}, \nabla w_{l,s}(x)\right)dx\le \vert V_{l,s}\vert \left(\H_r j(\xi)+\frac{1}{s}\right)+\int_{V\setminus V_{l,s}}j\left(\frac{x}{\eps_l},\xi\right)dx.
\]
Using periodicity and $\lim_{l\to +\infty}\vert V\setminus V_{l,s}\vert=0$, we obtain
\[
\limsup_{l\to +\infty}\int_{V} j\left(\frac{x}{\eps_l}, \nabla w_{l,s}(x)\right)dx\le\vert V\vert\left( \H_r j(\xi)+\frac{1}{s}\right).
\]
Letting $s\to +\infty$
\begin{equation}\label{diag1}
\limsup_{s\to +\infty}\limsup_{l\to +\infty}\int_{V} j\left(\frac{x}{\eps_l}, \nabla w_{l,s}(x)\right)dx\le\vert V\vert\H_r j(\xi).
\end{equation}
By compactness imbedding there exists a subsequence (not relabeled) $\{w_{l,s}\}_{l\ge 1}\subset l_\xi+W^{1,\infty}_0(V;\RR^m)$ such that $w_{l,s}\to l_\xi$ in $L^1(\Omega;\RR^m)$. Thus
\begin{equation}\label{diag2}
\limsup_{s\to +\infty}\limsup_{l\to +\infty}\Vert w_{l,s}-l_\xi\Vert_1=0.
\end{equation}
A simultaneous diagonalization of \eqref{diag1} and \eqref{diag2} together with the uniform bound \eqref{boundlocal} give a  sequence $\{w_l\}_{l\ge 1}\subset l_\xi+W^{1,\infty}_0(V;\RR^m)$ such that 
\[\left\{
\begin{array}{ll}
\displaystyle  w_l\stackrel{\ast}{\wto} l_\xi \mbox{ in }  W^{1,\infty}(V;\RR^m) \quad\mbox{ and }\quad \sup_{l\ge 1}\Vert \nabla w_{l}\Vert_\infty\le \Vert\xi\Vert+r,\\
 \displaystyle \limsup_{l\to +\infty}\int_{V} j\left(\frac{x}{\eps_l}, \nabla w_{l}(x)\right)dx\le\vert V\vert\H_r j(\xi). \\   
\end{array}
\right.
\]
Now, let $w\in\Aff(U;\RR^m)$ be such that $\nabla w(x)\in{\mathfrak D}$ for almost all $x\in U$. There exists a finite set $I$ such that $\nabla w(x)=\xi_i$ a.e. in $V_i$ where $\xi_i\in \MM^{m\times d}$, $V_i\subset U$ is open and $i\in I$. Therefore
\[
\int_U\H_r j(\nabla w(x))dx=\sum_{i\in I}\vert V_i\vert\H_r j(\xi_i).
\]
For every $i\in I$ there exists $\{w_\eps^i\}_{\eps>0}\subset l_{\xi_i}+W_0^{1,\infty}(V_i;\RR^m)$ such that 
\begin{equation}\label{locallimsup}
\left\{
\begin{array}{ll}
w_\eps^i\stackrel{\ast}{\wto} l_{\xi_i}\mbox{ in }W^{1,\infty}(V_i;\RR^m)\quad\mbox{ and }\quad\sup_{\eps>0}\Vert\nabla w_\eps^i\Vert_\infty\le\Vert\xi_i\Vert+r,\\
\displaystyle\limsup_{\eps\to 0}\int_{V_i} j\left(\frac{x}{\eps}, \nabla w_\eps^i(x)\right)dx\le \vert V_i\vert\H_r j(\xi_i)\\
\end{array}
\right.
\end{equation}
For any $\eps>0$, set $v_\eps=w_\eps^i-l_{\xi_i}$ on $V_i$ for all $i\in I$, then $v_\eps\in W_0^{1,\infty}(U;\RR^m)$ and $v_\eps\stackrel{\ast}{\wto} 0\mbox{ in }W^{1,\infty}(U;\RR^m)$. Define $w_\eps=v_\eps+w\in w+W^{1,\infty}_0(U;\RR^m)$ for all $\eps>0$ then $w_\eps\stackrel{\ast}{\wto} w\mbox{ in }W^{1,\infty}(U;\RR^m)$, $\sup_{\eps>0}\Vert\nabla w_\eps\Vert_\infty\le \Vert\nabla w\Vert_\infty+r$, and using \eqref{locallimsup}
\begin{align*}
\limsup_{\eps\to 0}\int_{U} j\left(\frac{x}{\eps}, \nabla w_\eps(x)\right)dx&=\limsup_{\eps\to 0}\int_{U} j\left(\frac{x}{\eps}, \nabla v_\eps(x)+\nabla w(x)\right)dx\\
&=\limsup_{\eps\to 0}\sum_{i\in I}\int_{V_i} j\left(\frac{x}{\eps}, \nabla w_\eps^i(x)\right)dx\\
&\le\sum_{i\in I}\limsup_{\eps\to 0}\int_{V_i} j\left(\frac{x}{\eps}, \nabla w_\eps^i(x)\right)dx\\
&\le\sum_{i\in I}\vert V_i\vert\H_r j(\xi_i)=\int_U \H_r j(\nabla w(x))dx.
\end{align*}

Let us prove the second part of the lemma. Let $w\in\Aff(\Omega;\RR^m)$ be such that $\int_U \H j(\nabla w(x))dx<+\infty$. Since $\dom j(x,\cdot)\subset B_R(0)$ for all $x\in\RR^d$, we have $\H_R j=\H j$.
Let $\{w_{\eps}\}_{\eps>0}\subset w+W^{1,\infty}_0(U;\RR^m)$ be such that $w_{\eps}\stackrel{\ast}{\wto} w\mbox{ in }W^{1,\infty}(U;\RR^m)$ as $\eps\to 0$, $\sup_{\eps>0}\Vert \nabla w_{\eps}\Vert_\infty\le R+\Vert\nabla w\Vert_\infty $ and 
\[
\limsup_{\eps\to 0}\int_{U} j\left(\frac{x}{\eps}, \nabla w_\eps(x)\right)dx\le \int_{U} \H j\left(\nabla w(x)\right)dx.
\]
\begin{sloppypar}
\noindent By compactness imbedding theorem in $L^1(U;\RR^m)$, there exists a subsequence $\{w_{\eps}\}_{\eps>0}\subset w+W^{1,\infty}_0(U;\RR^m)$ such that $w_{\eps}\to w\mbox{ in }L^1(U;\RR^m)$.\end{sloppypar}
\end{proof}

{\noindent{\em Proof of quasiconvexity of $\H f$.}} It is easy to see that $r\mapsto \H_r j(\cdot)$ is nonincreasing and for every $\xi\in\MM^{m\times d}$
\begin{equation}
\inf_{r>0}\H_rj(\xi)=\lim_{r\to +\infty}\H_rj(\xi)=\H j(\xi).
\end{equation}
Let $\xi\in\MM^{m\times d}$ and $\phi\in\Aff_0(Y;\RR^m)$. Denote again by $\phi$ the $Y$-periodic extension to $\RR^d$ of $\phi$. Let $\{u_n\}_{n\ge 1}\subset \Aff(\Omega;\RR^m)$ be defined by $u_n(x)=\xi x+\frac{1}{n}\phi(nx)$ for all $x\in\Omega$. By periodicity, $u_n\stackrel{\ast}{\wto} l_\xi$ in $W^{1,\infty}(\Omega;\RR^m)$. Let $r>0$. For any finite $r$,  $G_{r}$ is sequentially weak$^\ast$ lower semicontinuous in $W^{1,\infty}(\Omega;\RR^m)$. Using periodicity and the fact that $\sup_{n\ge 1}\Vert\nabla u_n\Vert_\infty\le\Vert\xi\Vert+\Vert\nabla\phi\Vert_\infty$, we deduce 
\begin{align*}
\vert\Omega\vert \H f(\xi)\le G_s(l_\xi)\le \liminf_{n\to +\infty}G_s(u_n)&\le \liminf_{n\to +\infty}H_{r+\Vert\nabla u_n\Vert_\infty}(u_n)\\&\le\lim_{n\to +\infty}\int_\Omega \H_r f\left(\xi+\nabla \phi\left(nx\right)\right)dx\\&=\vert\Omega\vert \int_Y \H_r f(\xi+\nabla \phi(x))dx.
\end{align*}
where $s=r+\Vert\xi\Vert+\Vert\nabla \phi\Vert_\infty$. Letting $r\to +\infty$ we obtain
\[
\H f(\xi)\le \int_Y \H f(\xi+\nabla \phi(x))dx,
\]
hence $\Z\H f=\H f$. Since $\H f$ is finite and Lemma \ref{quasifinite}, it follows that $\Q\H f=\H f$. \hfill $\square$
\section{Proof of Theorem \ref{main result 1} and \ref{main result 1bis}}
\subsection{Proof of the $\Gamma\mbox{-}\liminf_{\eps\to 0}I_\eps$ and $\Gamma\mbox{-}\liminf_{\eps\to 0}J_\eps$}\label{gliminf}
We only give the proof of the lower bound for $I_\eps$, the same proof works for  $J_\eps$ with the necessary changes.  

It suffices to show that for every $u\in W^{1,\infty}(\Omega;\RR^m)$
\[
\Gamma\mbox{-}\liminf_{\eps\to 0} I_{\eps}(u)\ge \int_\Omega \overline{\H\h}(\nabla u(x))dx.
\]
Without loss of generality, we can consider~$u,\{u_\eps\}_{\eps>0}\subset W^{1,\infty}(\Omega;\RR^m)$ satisfying $u_\eps\to u$ in $L^1(\Omega;\RR^m)$ and
\[
\sup_{\eps>0}I_\eps(u_\eps)<+\infty\;\mbox{ and }\;\lim_{\eps\to 0}I_\eps(u_\eps)=\liminf_{\eps\to 0}I_\eps(u_\eps).
\]
Since $\dom\h(x,\cdot)\subset \overline{\C}$ for all $x\in\RR^d$, the sequence $\{\nabla u_\eps\}_{\eps>0}\subset L^\infty(\Omega;\MM^{m\times d})$ is bounded. By Poincar\'e-Wirtinger inequality, for some $C>0$, we have
\[
\sup_{\eps>0}\Vert u_\eps\Vert_{\infty,\Omega}\le C\left\{\frac{1}{\vert \Omega\vert}\sup_{\eps>0}\Vert u_\eps\Vert_{1,\Omega}+\sup_{\eps>0}\Vert \nabla u_\eps\Vert_{\infty,\Omega}\right\}.
\]
Since $u_\eps\to u$ in $L^1(\Omega;\RR^m)$, we have $\sup_{\eps>0}\Vert u_\eps\Vert_{1,\Omega}<+\infty$, it follows that $\{u_\eps\}_{\eps>0}\subset W^{1,\infty}(\Omega;\RR^m)$ is bounded. Thus, there exists a subsequence (not relabeled) such that $u_\eps\stackrel{\ast}{\wto} u$ in $W^{1,\infty}(\Omega;\RR^m)$. Using Theorem~\ref{main result 2} \ref{mainresult22} and Theorem~\ref{BraidesTheorem}~\eqref{braides1} with $r=+\infty$
\begin{align*}
\liminf_{\eps\to 0}I_\eps(u_\eps)\ge \liminf_{\eps\to 0}\int_\Omega \h_n\left(\frac{x}{\eps},\nabla u_\eps(x)\right)dx\ge\int_\Omega \H\h_n\left(\nabla u(x)\right)dx,\\
\end{align*}
for all $n\in\NN$.
We finish the proof by using monotone convergence theorem and Theorem~\ref{main result 2} \ref{mainresult23}.
\hfill$\square$
\begin{remark}\label{notconnected} For the proof of $\Gamma\mbox{-}\liminf_{\eps\to 0}J_\eps$ instead of Poincar\'e-Wirtinger inequality we can use Poincar\'e inequality since the zero boundary conditions (note that in this case it is not necessary to assume that $\Omega$ is connected).
\end{remark}
\subsection{Proof of the $\Gamma\mbox{-}\limsup_{\eps\to 0}I_\eps$ and $\Gamma\mbox{-}\limsup_{\eps\to 0}J_\eps$}\label{glimsup} By an easy adaptation of Lemma 3.1 in \cite{oah-jpm08a} (see also \cite{oah-jpm07}, \cite{oah-jpm06}) we have
\begin{lemma}\label{relaxation aff gene} Let $f:\MM^{m\times d}\to [0,+\infty]$ be a Borel measurable function and $U\subset\RR^d$ be a bounded open set. Then for every $u\in\Aff(U;\RR^m)$ 
\[
\inf\left\{\limsup_{n\to +\infty} \int_U f(\nabla u_n)dx : u+\Aff_0(U;\RR^m)\ni u_n{\swto} u \hbox{ in }W^{1,\infty}\right\}\le \int_U \Z f(\nabla u)dx.
\]
\end{lemma}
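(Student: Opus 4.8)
The plan is as follows. If $\int_U\Z f(\nabla u)\,dx=+\infty$ there is nothing to prove, so I assume this integral finite. Since $u\in\Aff(U;\RR^m)$, I fix a finite family $\{U_i\}_{i\in I}$ of disjoint open sets with $|U\setminus\bigcup_{i\in I} U_i|=0$, $|\partial U_i|=0$ and $\nabla u\equiv\xi_i$ on $U_i$; finiteness of the integral forces $\Z f(\xi_i)<+\infty$ for every $i$ with $|U_i|>0$. For each $\delta>0$ I will build an explicit sequence $\{u_n^\delta\}_{n\ge1}\subset u+\Aff_0(U;\RR^m)$ with $u_n^\delta\to u$ in $L^1(U;\RR^m)$, with $\sup_n\|\nabla u_n^\delta\|_\infty<+\infty$, and with $\limsup_{n\to+\infty}\int_U f(\nabla u_n^\delta)\,dx\le\int_U\Z f(\nabla u)\,dx+\delta\,|U|$; letting $\delta\to0$ then gives the claim. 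This recovery sequence is the one produced in \cite[Lemma 3.1]{oah-jpm08a}, so the task reduces to recalling that construction and observing that weak$^\ast$ convergence in $W^{1,\infty}$ is automatic.

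On a piece $U_i$ with $\Z f(\xi_i)<+\infty$ I would use the definition \eqref{defzf} to pick $\psi_i^\delta\in\Aff_0(Y;\RR^m)$ with $\int_Y f(\xi_i+\nabla\psi_i^\delta)\,dx\le\Z f(\xi_i)+\delta$, extend it $Y$-periodically to $\RR^d$, and set $u_n^\delta(x)=u(x)+\tfrac1n\psi_i^\delta(nx)$ on the union $U_{i,n}$ of the cubes $\tfrac1n(z+Y)$, $z\in\mathbb{Z}^d$, whose closure lies in $U_i$; since $\psi_i^\delta$ vanishes on $\partial Y$ this matches $u$ on the cube walls, and on $U_i\setminus U_{i,n}$ (which is contained in a shrinking neighborhood of $\partial U_i$ and has measure tending to $0$) one interpolates back to $u$. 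Because $u$ is continuous the pieces glue into a continuous, piecewise affine element of $u+\Aff_0(U;\RR^m)$. A change of variables and $Y$-periodicity give $\int_{U_{i,n}}f\big(\xi_i+\nabla\psi_i^\delta(n\cdot)\big)\,dx=|U_{i,n}|\int_Y f(\xi_i+\nabla\psi_i^\delta)\,dx\le|U_i|\big(\Z f(\xi_i)+\delta\big)$, and summing over $i\in I$ yields the stated energy bound once the collar contribution is seen to be negligible.

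The step I expect to be the main obstacle is precisely the collar $U_i\setminus U_{i,n}$: when $f(\xi_i)=+\infty$ while $\Z f(\xi_i)<+\infty$ one cannot simply take $\nabla u_n^\delta\equiv\xi_i$ there, and a more careful interpolation near $\partial U_i$ is required --- keeping the transition gradients inside $\dom f$ by reusing the oscillation $\nabla\psi_i^\delta$, which already lies in $\dom f$ at a.e.\ point, and confining the genuinely interpolating layer to a set of vanishing measure on which $\nabla u_n^\delta$ stays in a fixed bounded set so that its energy contribution tends to $0$. This is the heart of \cite[Lemma 3.1]{oah-jpm08a}. Granting it, the construction is explicit with $\sup_n\|\nabla u_n^\delta\|_\infty\le\|\nabla u\|_\infty+\max_{i\in I}\|\nabla\psi_i^\delta\|_\infty+o(1)<+\infty$, so $\{u_n^\delta\}_{n\ge1}$ is bounded in $W^{1,\infty}(U;\RR^m)$; combined with $u_n^\delta\to u$ in $L^1(U;\RR^m)$, every subsequence then has a further subsequence converging weak$^\ast$ in $W^{1,\infty}(U;\RR^m)$, necessarily to $u$, whence $u_n^\delta\swto u$ in $W^{1,\infty}(U;\RR^m)$. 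Thus each $u_n^\delta$ is admissible in the infimum on the left-hand side, which is therefore $\le\limsup_{n\to+\infty}\int_U f(\nabla u_n^\delta)\,dx\le\int_U\Z f(\nabla u)\,dx+\delta\,|U|$; letting $\delta\to0$ finishes the argument.
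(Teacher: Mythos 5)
Your overall scaffolding is sound — reduction to $\int_U\Z f(\nabla u)\,dx<+\infty$, choice of witnesses $\psi_i^\delta\in\Aff_0(Y;\RR^m)$, the rescaled periodic oscillation on the cubes $\tfrac1n(z+Y)\subset U_i$, the change of variables, deducing $u_n^\delta\swto u$ from the uniform Lipschitz bound plus $L^1$ convergence, and the final $\delta\to 0$ — and you have correctly located the crux: what to do on the collar $U_i\setminus U_{i,n}$ when $f(\xi_i)=+\infty$ although $\Z f(\xi_i)<+\infty$. But your proposed resolution of that crux would fail. You argue that the interpolating layer can be confined to a set of vanishing measure ``on which $\nabla u_n^\delta$ stays in a fixed bounded set so that its energy contribution tends to $0$''. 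For an arbitrary Borel $f:\MM^{m\times d}\to[0,+\infty]$ — and the lemma assumes nothing more — boundedness of the gradients gives no control on $f$ along the layer: $f$ need not be bounded on any bounded set and may equal $+\infty$ at $\xi_i$ itself and at every matrix the transition gradients pass through. This is not a side case: in this paper the lemma is wanted precisely for integrands that blow up near $\partial\C$ and are $+\infty$ outside $\overline{\C}$, so ``small measure and bounded gradients imply negligible energy'' is exactly the estimate that is unavailable. In effect your construction proves the lemma only when $f(\xi_i)<+\infty$ for every piece, in which case no interpolation is needed at all (keep $u_n^\delta=u$ off the cubes — the periodized $\psi_i^\delta$ vanishes on the cube faces — and the collar contributes $|U_i\setminus U_{i,n}|\,f(\xi_i)\to 0$); the genuinely singular case is deferred to the citation with an incorrect description of how it works.

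For comparison, the paper gives no argument here either: it invokes an ``easy adaptation'' of \cite[Lemma 3.1]{oah-jpm08a}. But the device that makes the singular case work there is different from the one you sketch: one never creates a transition layer. Each $U_i$ is filled, up to a Lebesgue-null set, by countably many pairwise disjoint homothetic copies $a_j+\eps_jY\subset U_i$ of the unit cube (Vitali covering), and on each copy one sets $u+\eps_j\psi_i^\delta\bigl((\cdot-a_j)/\eps_j\bigr)$; since $\psi_i^\delta=0$ on $\partial Y$ the resulting map is Lipschitz, equals $u$ on $\partial U$, and its gradient is a.e. of the form $\xi_i+\nabla\psi_i^\delta(y)$, so the energy is exactly $\sum_i|U_i|\int_Y f(\xi_i+\nabla\psi_i^\delta)\,dy\le\int_U\Z f(\nabla u)\,dx+\delta|U|$, with no set of positive measure carrying gradients outside the witness pattern (the price, which the alluded-to ``adaptation'' must address, is that the competitor has countably many affine pieces, whereas $\Aff$ is defined here with finite families). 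Without reproducing such an argument, or at least stating its mechanism correctly, the inequality in the case where $\nabla u$ takes a value outside $\dom f$ remains unproved, and that is precisely the step that gives the lemma its content in this paper.
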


The following approximation result is needed for the proof of the $\Gamma\mbox{-}\limsup$, see for instance \cite[Theorem 10.16 and Corollary 10.21]{dacorogna-marcellini99} or \cite[Proposition 5.1.]{oah10} for a proof.
\begin{lemma}\label{approximation of sobolev functions} Let $K\subset \MM^{m\times d}$ be a compact convex set with $0\in\inte K$. Let $t\in [0,1[$ and $u\in W^{1,\infty}(\Omega;\RR^m)$ be such that
\[
\nabla u(x)\in tK \hbox{ a.e. in }\Omega.
\]
Then for every integer $n> \frac{1}{1-t}$ there exist $u_n\in W^{1,\infty}(\Omega;\RR^m)$ and an open set  $\Omega_n\subset~\Omega$ such that
\begin{enumerate}[label={\rm (\roman{*})}, ref={\rm (\roman{*})}]
\item $\displaystyle u_n\lfloor_{\Omega_n}\in\Aff(\Omega_n;\RR^m)$ and $u_n=u$ on $\partial\Omega$;
\item $\displaystyle\|u_n- u\|_{1,p,\Omega}\le {n^{-1}} \hbox{ for all }p\in [1,+\infty[$;
\item $\displaystyle \nabla u_n(x)\in \left(t+\frac{1}{n}\right)K\hbox{ a.e. in }\Omega$;
\item $\displaystyle\left\vert\Omega\setminus\Omega_n\right\vert\le {1\over n} \hbox{ and }\vert\partial\Omega_n\vert=0$.
\end{enumerate}
\end{lemma}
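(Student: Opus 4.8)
The plan is to obtain $u_n$ from $u$ by replacing $u$, on a large portion of each cube of a fine partition of almost all of $\Omega$, with the first order Taylor polynomial of $u$ at the cube's centre, gluing this affine piece back to $u$ across a thin collar near the cube's boundary, and keeping the gradient of the glued function inside $(t+\tfrac1n)K$ by using the ``room'' that $0\in\inte K$ and $t+\tfrac1n<1$ leave between $tK$ and $(t+\tfrac1n)K$. One may assume $t>0$, since $t=0$ forces $u$ constant. Fix $n>\tfrac1{1-t}$, so that $t+\tfrac1n<1$, and pick $r_0>0$ with $B_{r_0}(0)\subset K$; since $K$ is convex, $tK+B_{r_0/n}(0)\subset tK+\tfrac1n K=(t+\tfrac1n)K$. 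As $\partial\Omega$ is Lipschitz, $u$ is Lipschitz, hence (Rademacher) classically differentiable a.e.\ with classical gradient equal a.e.\ to the weak one and so lying in $tK$; at such a point $x$ write $l_x(y)=u(x)+\nabla u(x)(y-x)$.

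The first ingredient I would establish is an Egorov-type uniform control on the quality of affine approximation. For a differentiability point $x$ of $u$ and $s>0$ set
\[
\omega_s(x)=\sup_{0<\rho\le s}\ \sup_{|y-x|\le\rho}\frac{|u(y)-l_x(y)|}{\rho}\ +\ \sup_{0<\rho\le s}\frac1{|B_\rho(x)|}\int_{B_\rho(x)}|\nabla u(z)-\nabla u(x)|\,dz,
\]
which is nondecreasing in $s$ and tends to $0$ as $s\downarrow0$ for a.e.\ $x\in\Omega$ (differentiability, and Lebesgue points of $\nabla u$). Since $|\Omega|<\infty$, Egorov's theorem yields a measurable $G\subset\Omega$ with $|\Omega\setminus G|$ as small as desired and a nondecreasing $\omega$ with $\omega(s)\to0$ as $s\to0$ and $\sup_{x\in G}\omega_s(x)\le\omega(s)$. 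Fix a small $\gamma\in]0,1[$ (to be chosen from $d,n$). By a Vitali covering argument applied to $G$, discarding cubes meeting a thin boundary strip, I would select finitely many pairwise disjoint open cubes $Q_1,\dots,Q_N$ with $\overline{Q_i}\subset\Omega$, centres $x_i\in G$ and sides $s_i\le s_0$, such that $|\Omega\setminus\bigcup_iQ_i|\le\tfrac1{2n}$; let $Q_i'$ be the concentric open cube of side $(1-\gamma)s_i$, and fix $\gamma$ small enough that moreover $\sum_i|Q_i\setminus Q_i'|=(1-(1-\gamma)^d)\sum_i|Q_i|\le\tfrac1{2n}$. Take $\phi_i\in C_c^\infty(Q_i)$, $0\le\phi_i\le1$, $\phi_i\equiv1$ on $Q_i'$, $\|\nabla\phi_i\|_\infty\le C_d/(\gamma s_i)$, and define
\[
u_n=u+\sum_{i=1}^N\phi_i\,(l_{x_i}-u)\in W^{1,\infty}(\Omega;\RR^m),\qquad \Omega_n=\bigcup_{i=1}^N\inte(Q_i').
\]
Since the $\phi_i$ have pairwise disjoint supports compactly contained in $\Omega$, $u_n=u$ near $\partial\Omega$ (so $u_n-u\in W^{1,\infty}_0(\Omega;\RR^m)$ and (i) holds), $u_n=l_{x_i}$ on $Q_i'$ (so $u_n\lfloor_{\Omega_n}\in\Aff(\Omega_n;\RR^m)$), and $|\Omega\setminus\Omega_n|\le|\Omega\setminus\bigcup_iQ_i|+\sum_i|Q_i\setminus Q_i'|\le\tfrac1n$ with $|\partial\Omega_n|=0$, which is (iv).

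It remains to verify (iii) and (ii), and this is the only delicate point. On $Q_i'$, $\nabla u_n=\nabla u(x_i)\in tK$. On the collar $Q_i\setminus Q_i'$,
\[
\nabla u_n=\phi_i\,\nabla u(x_i)+(1-\phi_i)\,\nabla u+\nabla\phi_i\otimes(l_{x_i}-u),
\]
where, as $\phi_i\in[0,1]$, the first two terms are a convex combination of elements of $tK$ and hence lie in $tK$, while for the last, using $\|l_{x_i}-u\|_{L^\infty(Q_i)}\le\tfrac{\sqrt d}{2}s_i\,\omega(s_0)$,
\[
\|\nabla\phi_i\otimes(l_{x_i}-u)\|_{L^\infty(Q_i)}\le\frac{C_d}{\gamma s_i}\cdot\frac{\sqrt d}{2}\,s_i\,\omega(s_0)=\frac{C_d\sqrt d}{2\gamma}\,\omega(s_0).
\]
With $\gamma$ now fixed, choose $s_0$ so small that $\tfrac{C_d\sqrt d}{2\gamma}\omega(s_0)\le r_0/n$; then $\nabla u_n\in tK+B_{r_0/n}(0)\subset(t+\tfrac1n)K$ a.e.\ on $\bigcup_iQ_i$, and $\nabla u_n=\nabla u\in tK$ off $\bigcup_iQ_i$, giving (iii). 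For (ii): $\|u_n-u\|_{L^\infty(\Omega)}\le\max_i\|l_{x_i}-u\|_{L^\infty(Q_i)}\le\tfrac{\sqrt d}{2}s_0\,\omega(s_0)$; and $\nabla u_n-\nabla u$ vanishes off $\bigcup_iQ_i$, equals $\nabla u(x_i)-\nabla u$ on $Q_i'$ with $\int_{Q_i'}|\nabla u(x_i)-\nabla u|\le|Q_i|\,\omega(s_0)$ by the definition of $G$, and is bounded by $C_d\sqrt d\,\omega(s_0)/(2\gamma)$ on the collars, of total measure $\le\tfrac1{2n}$. Hence $\|u_n-u\|_{L^1(\Omega)}$ and $\|\nabla u_n-\nabla u\|_{L^1(\Omega)}$ can be made arbitrarily small; since $|\nabla u_n-\nabla u|$ is uniformly bounded by $2tR_K$ with $R_K=\sup_{\xi\in K}|\xi|$, interpolating the $L^1$ bounds against this $L^\infty$ bound gives, for each $p\in[1,+\infty[$, $\|u_n-u\|_{1,p,\Omega}$ as small as we wish, i.e.\ (ii) after relabeling. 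The main obstacle is exactly the collar estimate above: there one balances the cutoff derivative $\|\nabla\phi_i\|_\infty\sim(\gamma s_i)^{-1}$ against the Taylor remainder $\|l_{x_i}-u\|_{L^\infty(Q_i)}$, and the reason it works is that the latter is $o(s_i)$ uniformly over centres in $G$ — precisely what Egorov's theorem provides — so the product is $o(1)$ and is absorbed by the $r_0/n$-room that $0\in\inte K$ and $t+\tfrac1n<1$ leave between $tK$ and $(t+\tfrac1n)K$.
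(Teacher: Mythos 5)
Your construction is essentially self-contained and, for items (i), (iii) and (iv), it is correct: the Rademacher--Egorov uniform modulus on a set $G$ of nearly full measure, the Vitali cover by small cubes centred in $G$, the replacement by the Taylor affine map on the shrunken cubes, the collar identity $\nabla u_n=\phi_i\nabla u(x_i)+(1-\phi_i)\nabla u+(l_{x_i}-u)\otimes\nabla\phi_i$ together with $\|l_{x_i}-u\|_{\infty,Q_i}\le C s_i\,\omega(s_0)$ and the inclusion $tK+B_{r_0/n}(0)\subset\left(t+\tfrac1n\right)K$ do exactly what is needed, and the order of the choices ($\gamma$ from $d,n$, then $s_0$) is consistent. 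Note that the paper gives no proof of this lemma: it refers to Dacorogna--Marcellini (Theorem 10.16 and Corollary 10.21) and to \cite[Proposition 5.1]{oah10}, whose arguments run through mollification and piecewise affine interpolation on a triangulated subdomain compactly contained in $\Omega$; your gluing-to-Taylor-maps route is a genuinely different and perfectly reasonable alternative.

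The step that fails is your derivation of (ii). Interpolating an $L^1$ bound $\delta$ against an $L^\infty$ bound $M$ gives $\|v\|_{L^p}\le\delta^{1/p}M^{1-1/p}$, which tends to $M$ as $p\to+\infty$; since $\nabla u_n$ is constant on each piece of $\Omega_n$ while $\nabla u$ is an arbitrary $tK$-valued $L^\infty$ field, $M=\|\nabla u_n-\nabla u\|_{\infty,\Omega}$ is in general of order $\diam(tK)$, so no relabelling produces $\|u_n-u\|_{1,p,\Omega}\le n^{-1}$ simultaneously for all $p\in[1,+\infty[$. In fact the literal statement (ii) cannot be achieved by any construction: because $\|\nabla v\|_{L^p(\Omega)}\to\|\nabla v\|_{L^\infty(\Omega)}$ as $p\to+\infty$, (ii) for all finite $p$ would force $\|\nabla u_n-\nabla u\|_{\infty,\Omega}\le n^{-1}$, which is impossible already for $m=d=1$, $K=[-1,1]$, $\nabla u=t\chi_E$ with $E$ and its complement meeting every interval in positive measure, since $\nabla u_n$ is constant on each open piece of $\Omega_n$. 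So (ii) must be read for each fixed $p$ (equivalently, $u_n\to u$ in $W^{1,p}$ for every finite $p$, without a $p$-uniform rate); that weaker form is what the cited references prove and is all the paper uses (in Proposition \ref{upper bound prop} only $u_n\to u$ in $L^1$ and a.e.\ convergence of a subsequence of $\nabla u_n$ enter). With this reading your construction does give (ii), but you must tune the parameters to the fixed $p$: with $\gamma$ chosen only from $d,n$ the collar, of measure $\le\tfrac1{2n}$, contributes about $\diam(tK)/n$ to $\|\nabla u_n-\nabla u\|_{L^1}$, which is not ``arbitrarily small'' for fixed $n$, so choose $\gamma$ (before $s_0$) to make the collar measure as small as required; also take $s_i\le 2s_0/\sqrt d$ so that the radius $\tfrac{\sqrt d}{2}s_i$ stays within the range of $\omega$, and record the (routine) measurability of $x\mapsto\omega_s(x)$ needed for Egorov. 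These are small repairs; the only essential flaw is the uniform-in-$p$ claim.
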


Now, we are able to extend Lemma \ref{homog lemma} to Lipschitz functions with gradients compactly included in $\C$ by using Lemma \ref{approximation of sobolev functions}.
\begin{proposition}\label{upper bound prop} Assume that {\rm (H$_1$)} and {\rm (H$_2$)} hold. Let $u\in W^{1,\infty}(\Omega;\RR^m)$ and $t\in [0,1[$ such that
\[
\nabla u(x)\in t\overline{\C}\hbox{ a.e. in }\Omega.
\]
Let $\{\eps_s\}_{s\in\NN^*}$ be such that $\lim_{s\to +\infty}\eps_s=0$. Then there exist sequences $\{\tau_s\}_{s\in\NN^*}\subset [0,1[$ and $\{u_s\}_{s\in\NN^*}\subset \tau_su+W_0^{1,\infty}(\Omega;\RR^m)$ such that $[0,1[\ni\tau_s\to1$ as $s\to +\infty$,
\[
u_s\to u \hbox{ in }L^{1} \quad\hbox{ and }\quad\limsup_{s\to +\infty}\int_\Omega \h\left(\frac{x}{\eps_s},\nabla u_s(x)\right)dx\le \int_\Omega \Z\H\h(\nabla u(x))dx.
\]
\end{proposition}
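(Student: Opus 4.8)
The plan is to combine the approximation Lemma \ref{approximation of sobolev functions} with the linear-growth homogenization Lemma \ref{homog lemma}, using the truncated integrands $\h_n$ from Theorem \ref{main result 2} to have something with linear growth to feed into Lemma \ref{homog lemma}, and then using (H$_1$) (radial uniform upper semicontinuity) to pass from the dilated competitor back to the original integrand $\h$. Throughout, $K=\overline{\C}$ (compact convex with $0\in\inte K$ by the standing hypotheses and the line segment principle).

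First I would fix $u$ with $\nabla u\in t\overline{\C}$ a.e. and the sequence $\eps_s\to 0$. For each integer $n>\frac1{1-t}$, Lemma \ref{approximation of sobolev functions} produces $u_n\in W^{1,\infty}(\Omega;\RR^m)$ and an open $\Omega_n\subset\Omega$ with $u_n$ piecewise affine on $\Omega_n$, $u_n=u$ on $\partial\Omega$, $\nabla u_n\in(t+\frac1n)\overline{\C}$ a.e., $|\Omega\setminus\Omega_n|\le\frac1n$, $|\partial\Omega_n|=0$, and $u_n\to u$ in $L^1$. Set $\tau_n=t+\frac1n<1$ for $n$ large; note $\tau_n\to 1$ only if $t=1$, so instead I would track that $\nabla u_n$ lies in the fixed compact set $\tau_{n_0}\overline{\C}\subset\inte\C$ for all $n\ge n_0$, hence by (H$_2$) the relevant integrands are locally bounded on the range of these gradients — this is what makes Lemma \ref{homog lemma} applicable on each affine piece of $u_n\lfloor_{\Omega_n}$, with $\mathfrak D$ a compact subset of $\inte\C$ and $j=\h$. (On $\Omega\setminus\Omega_n$, which has small measure, I would simply extend $u_n$ by $u$; since $\nabla u\in t\overline{\C}\subset\inte\C$, the contribution is controlled by a local bound times $|\Omega\setminus\Omega_n|$, but to keep $\h$ finite there I may need to first dilate — see below.) Applying Lemma \ref{homog lemma} (the $r=0$, bounded-domain case, whose hypothesis $\dom\h(x,\cdot)\subset B_R(0)$ holds because $\C$ is bounded) to $u_n\lfloor_{\Omega_n}$ gives, for each $n$, a sequence $\{w^n_\eps\}_\eps\subset u_n+W^{1,\infty}_0(\Omega_n;\RR^m)$ with $w^n_\eps\to u_n$ in $L^1$ and $\limsup_{\eps\to0}\int_{\Omega_n}\h(\frac x\eps,\nabla w^n_\eps)dx\le\int_{\Omega_n}\H\h(\nabla u_n)dx\le\int_\Omega\H\h(\nabla u_n)dx$ (with the convention that $\H\h=+\infty$ outside $\inte\C$, harmless since $\nabla u_n\in\inte\C$).

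The heart of the argument — and the step I expect to be the main obstacle — is then a \emph{double passage}: a dilation by a factor $\tau_s\to1$ together with a diagonalization over $n$, $s$, and $\eps$, exploiting (H$_1$) to control $\h(\frac x\eps,\tau_s\nabla w^n_\eps)$ by $\h(\frac x\eps,\nabla w^n_\eps)+\delta$ once $1-\tau_s$ is small enough (uniformly in $x$ and in the competitor). Concretely: given $\delta>0$, (H$_1$) provides $\eta>0$; pick $\tau_s=1-\eta_s$ with $\eta_s\downarrow0$, $\eta_s\le\eta$; the dilated field $\tau_s w^n_\eps$ has gradient $\tau_s\nabla w^n_\eps$, and since $\nabla w^n_\eps\in\dom\h(\frac x\eps,\cdot)$ a.e. (because the $\limsup$ is finite, so the integrand is finite a.e. along a subsequence) we get the pointwise bound, hence $\int\h(\frac x\eps,\tau_s\nabla w^n_\eps)dx\le\int\h(\frac x\eps,\nabla w^n_\eps)dx+\delta|\Omega|$. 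I would also use that $\Z\H\h\ge\H\h$ is not automatic — rather $\Z\H\h\le\H\h$ fails in general, so I must instead target $\int_\Omega\Z\H\h(\nabla u)dx$ directly: here I invoke Lemma \ref{relaxation aff gene} with $f=\H\h$ applied to the piecewise-affine $u_n\lfloor_{\Omega_n}$ to replace $\int\H\h(\nabla u_n)$ by (something close to) $\int\Z\H\h(\nabla u_n)$ via an inner affine approximation $u_{n,k}$ of $u_n$, at the cost of another diagonal index $k$; then continuity of $\Z\H\h$ on $\inte(\dom\Z\H\h)\supset\inte\C$ (Lemma \ref{fonseca}) lets me pass $\int\Z\H\h(\nabla u_{n,k})\to\int\Z\H\h(\nabla u_n)\to\int\Z\H\h(\nabla u)$ as $k,n\to\infty$ using dominated convergence (the gradients stay in a fixed compact subset of $\inte\C$ where $\Z\H\h$ is bounded).

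Finally I would assemble everything by a standard diagonal extraction (Attouch's diagonalization lemma, or an explicit $\eps=\eps(s)$, $n=n(s)$, $k=k(s)$ choice): set $u_s$ to be the dilated, homogenized, affinely-approximated competitor corresponding to these matched indices, so that $u_s\in\tau_s u+W^{1,\infty}_0(\Omega;\RR^m)$ (the dilation scales the boundary datum $u$ to $\tau_s u$, as required), $u_s\to u$ in $L^1$ (since $\tau_s\to1$, $u_n\to u$, $u_{n,k}\to u_n$, and $w^n_\eps\to u_n$ all in $L^1$, with the extra $\|\nabla\cdot\|_\infty$ bound giving uniform control), and $\limsup_{s\to\infty}\int_\Omega\h(\frac x\eps_s,\nabla u_s)dx\le\int_\Omega\Z\H\h(\nabla u)dx$. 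The one delicate point to be careful about is the set $\Omega\setminus\Omega_n$ where $u_n$ is only Sobolev: there $\nabla u\in t\overline{\C}$, and after dilation $\tau_s t\overline{\C}\subset\inte\C$, so (H$_2$) bounds $\h(\frac x\eps,\tau_s\nabla u)$ by a constant depending only on $t\overline{\C}$ (a fixed compact set), and $|\Omega\setminus\Omega_n|\le\frac1n\to0$ kills this term — so it contributes $0$ in the limit. That handles the mismatch between $\Omega$ and $\Omega_n$ cleanly, and completes the plan.
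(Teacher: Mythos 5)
You have gathered the right ingredients (Lemma \ref{approximation of sobolev functions}, Lemma \ref{relaxation aff gene}, Lemma \ref{homog lemma}, the continuity of $\Z\H\h$ on $\inte\C$, a final diagonalization), and the paper's proof uses exactly these, but your plan composes them in an order that leaves a real gap at the central step. First, a sign error that matters: $\Z\H\h\le\H\h$ is automatic (take $\psi=0$ in \eqref{defzf}); what fails in general is $\H\h\le\Z\H\h$. Consequently, applying Lemma \ref{homog lemma} with $j=\h$ directly to $u_n\lfloor_{\Omega_n}$ (which is legitimate, since $\nabla u_n$ stays in a compact subset of $\inte\C$) only yields the bound $\int_{\Omega_n}\H\h(\nabla u_n)\,dx$, which cannot be compared from above with $\int_\Omega\Z\H\h(\nabla u)\,dx$. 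To reach $\Z\H\h$ you must, as you half-suggest, first produce the piecewise affine competitors $v_{m,n}\in u_n+\Aff_0(\Omega_n;\RR^m)$ of Lemma \ref{relaxation aff gene} with $\limsup_m\int_{\Omega_n}\H\h(\nabla v_{m,n})\,dx\le\int_{\Omega_n}\Z\H\h(\nabla u_n)\,dx$, and then run the homogenization Lemma \ref{homog lemma} on \emph{these} functions. But here their gradients are only known to lie in $\dom\H\h\subset\overline{\C}$, possibly approaching $\partial\C$, where (H$_2$) gives no $L^\infty_{\rm loc}$ bound; so the standing hypothesis of Lemma \ref{homog lemma} ($j(\cdot,\xi)\in L^\infty_{\rm loc}$ on the relevant gradient set) is not available, and your plan never verifies it.

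This is precisely why the dilation must be performed \emph{before} homogenizing, not after: one replaces $v_{m,n}$ by $\tau_l v_{m,n}$, so that $\tau_l\nabla v_{m,n}\in\tau_l\overline{\C}\subset\inte\C$ and Lemma \ref{homog lemma} applies, and the price of the dilation is paid at the level of $\H\h$ via its radial uniform upper semicontinuity (Corollary \ref{exist lim} \ref{ruuscHp}, a consequence of (H$_1$)): $\int\H\h(\tau_l\nabla v_{m,n})\le\frac1l+\int\H\h(\nabla v_{m,n})$. Your proposal instead dilates the already homogenized competitors $w^n_\eps$ and invokes (H$_1$) on $\h$ itself; that estimate is true but is applied to the wrong object and does not retroactively justify the application of Lemma \ref{homog lemma} to undilated affine approximants, nor does it bridge $\H\h(\nabla u_n)$ and $\Z\H\h(\nabla u)$. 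Two smaller slips: gluing the competitor with $u$ (or $\tau_s u$) on $\Omega\setminus\Omega_n$ breaks continuity across $\partial\Omega_n$, since $w^n_\eps$ matches $u_n$, not $u$, there — the paper glues with $\tau_l u_n$, and the error term is then controlled by (H$_2$) on the fixed compact $\frac{1+t}{2}\overline{\C}$ (no dilation is needed for that, contrary to what you suggest); and the needed energy bound on the glued region uses the periodic mean of $\sup_{\xi\in\tau_l\overline{\C}}\h(\cdot,\xi)$, again from (H$_2$), times $|\Omega\setminus\Omega_n|\le\frac1n$.
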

\begin{proof} Let $\{\eps_s\}_{s\in\NN}\subset ]0,+\infty[$ be such that $\lim_{s\to +\infty}\eps_s=0$. Let $u\in W^{1,\infty}(\Omega;\RR^m)$ and $t\in [0,1[$ such that
$
\nabla u(x)\in t\overline{\C}\hbox{ a.e. in }\Omega.
$
By Proposition \ref{approximation of sobolev functions} there exist $\{u_n\}_{n\in\NN^*}\subset W^{1,\infty}(\Omega;\RR^m)$ and a sequence of open sets  $\{\Omega_n\}_{n\in\NN^*},\Omega_n\subset\Omega$ such that for every integer $n> (1-t)^{-1}$
\begin{enumerate}[label=$\diamond$, ref=$\diamond$]
\item $\displaystyle u_n\lfloor_{\Omega_n}\in\Aff(\Omega_n;\RR^m)$ and $u_n=u$ on $\partial\Omega$;
\item $\displaystyle\Vert u_n-u\Vert_{1,p,\Omega}\le {n^{-1}}\hbox{ for all }p\in [1,+\infty[$;
\item $\displaystyle \nabla u_n(x)\in (t+n^{-1})\overline{\C}\hbox{ a.e. in }\Omega$;
\item $\displaystyle\left\vert\Omega\setminus\Omega_n\right\vert\le {n^{-1}} \hbox{ and }\vert\partial\Omega_n\vert=0$.
\end{enumerate}
We assume, up to a subsequence, that $\nabla u_n(\cdot)\to\nabla u(\cdot)$ a.e. in $\Omega$ and $u_n\to u$ in $L^1$ as $n\to +\infty$. Choose $n_t\in\NN^*$ in order to have 
\[
t+\frac{1}{n_t}<\frac{1+t}{2}<1.
\]
Let $n\ge n_t$, then $\nabla u_n(x)\in \frac{1+t}{2} \overline{\C}$ a.e. in $\Omega$. Set $M_t=\sup\{\Z\H\h(\xi):\xi\in \frac{1+t}{2} \overline{\C}\}$ which is finite since Lemma \ref{decomposition} \ref{decomposition4}. Since $\nabla u_n(x),\nabla u(x)\in \frac{1+t}{2}\overline{\C}$ a.e. in $\Omega$ for all $n\ge n_t$, using continuity of $\Z\H\h\lfloor_{\inte\C}$ (combine Lemma \ref{fonseca} and Lemma \ref{decomposition} \ref{decomposition6}), and Lebesgue dominated convergence theorem, we obtain
\begin{equation}\label{strong fin}
\limsup_{n\to +\infty}\int_\Omega \Z\H\h(\nabla u_n(x))dx\le \int_\Omega \Z\H\h(\nabla u(x))dx.
\end{equation}
By Lemma \ref{relaxation aff gene}, for every $n\ge n_t$ there exists $\{v_{m,n}\}_{m\in\NN^*}\subset u_{n}+\Aff_0(\Omega_n;\RR^m)$ such that $v_{m,n}\swto u_{n}$ in $W^{1,\infty}$ as $m\to +\infty$, and
\begin{equation}\label{eq1 upper bound prop}
\limsup_{m\to +\infty}\int_{\Omega_n} \H\h(\nabla v_{m,n}(x))dx\le \int_{\Omega_n} \Z\H\h(\nabla u_{n}(x))dx\le M_t\vert\Omega\vert.
\end{equation}
By compact imbedding theorem for every $n \ge n_t$ there exists a subsequence (not relabeled) $v_{m,n}\to u_{n}$ in $L^1(\Omega_n)$ when $m\to +\infty$, and by \eqref{eq1 upper bound prop} we can assume that $\nabla v_{m,n}(\cdot)\in \dom\H\h$. 

Let $n\ge n_t$, $m\in \NN$ and $\tau\in [0,1[$. We have $\tau v_{m,n}\in \tau u_{n}+\Aff_0(\Omega_n;\RR^m)$ and $\tau\nabla v_{m,n}(\cdot)\in \tau\overline{\C}\subset\inte\C$ a.e. in $\Omega_n$. By Lemma \ref{homog lemma}, there exists a sequence $\{u_{s,m,n}^\tau\}_{s\in\NN}\subset \tau v_{m,n}+W^{1,\infty}_0(\Omega_n;\RR^m)$ such that $u_{s,m,n}^\tau\to \tau v_{m,n}$ in $L^{1}(\Omega_n;\RR^m)$  when $s\to +\infty$, and 
\[
\limsup_{s\to +\infty}\int_{\Omega_n}\h\left(\frac{x}{\eps_s},\nabla u_{s,m,n}^\tau(x)\right)dx\le \int_{\Omega_n}\H\h(\tau\nabla v_{m,n}(x))dx.
\]
Let $\{\tau_l\}_{l\in\NN^*}\subset [0,1[$ be such that $\lim_{l\to +\infty}\tau_l=1$. By Corollary \ref{exist lim} \ref{ruuscHp}, for every $l\ge 1$
\[
\int_{\Omega_n} \H\h(\tau_l \nabla v_{m,n}(x))dx\le \frac1l+\int_{\Omega_n} \H\h(\nabla v_{m,n}(x))dx.
\]
Fix $l\ge 1$. We deduce that there exists $\{u_{s,m,n}^{\tau_l}\}_{s\in\NN}\subset \tau_l v_{m,n}+W^{1,\infty}_0(\Omega_n;\RR^m)$ such that $u_{s,m,n}^{\tau_l}\to \tau_l v_{m,n}$ in $L^{1}(\Omega_n;\RR^m)$  when $s\to +\infty$, and 
\[
\limsup_{s\to +\infty}\int_{\Omega_n}\h\left(\frac{x}{\eps_s},\nabla u_{s,m,n}^{\tau_l}(x)\right)dx\le \frac1l+\int_{\Omega_n} \H\h(\nabla v_{m,n}(x))dx.
\]
Set $u_{s,m,n,l}={\mathbb I}_{\Omega_n}u_{s,m,n}^{\tau_l}+{\mathbb I}_{\Omega\setminus\Omega_n}\tau_l u_{n}\in \tau_lu+W^{1,\infty}_0(\Omega;\RR^m)$, where \[{\mathbb I}_S(x)=\left\{\begin{array}{ll}1&\mbox{ if }x\in S\\0&\mbox{ if }x\notin S\end{array}\right.\] for any $S\subset\RR^d$. We have
\begin{align*}
\label{eq1}\limsup_{s\to +\infty}\left\{\int_{\Omega} \h\left(\frac{x}{\eps_s},\nabla u_{s,m,n,l}(x)\right)dx\right.&-\left.\int_{\Omega\setminus\Omega_n}\h\left(\frac{x}{\eps_s},\nabla u_{s,m,n,l}(x)\right)dx\right\}\\&\le \frac1l+\int_{\Omega_n} \H\h(\nabla v_{m,n}(x))dx.\nonumber
\end{align*}
Set $M_{s,l}=\int_\Omega\sup\{\h(\frac{x}{\eps_s},\xi):\xi\in \tau_l \overline{\C}\}dx$, by {\rm (H$_2$)} and periodicity
\[
M_l=\lim_{s\to +\infty}M_{s,l}=\vert \Omega\vert \int_Y \sup\{\h(x,\xi):\xi\in \tau_l \overline{\C}\}dx.
\]
We have
\[
\left\vert\int_{\Omega\setminus\Omega_n}\h\left(\frac{x}{\eps_s},\nabla u_{s,m,n,l}(x)\right)dx\right\vert\le M_{s,l}\vert\Omega\setminus\Omega_n\vert=\frac{1}{n} M_{s,l}.
\]
We deduce 
\[
\limsup_{m\to +\infty}\limsup_{s\to +\infty}\int_{\Omega} \h\left(\frac{x}{\eps_s},\nabla u_{s,m,n,l}(x)\right)dx\le
\frac{1}{n} M_{l}+\frac1l+\int_{\Omega_n} \Z\H\h(\nabla u_{n}(x))dx.
\]
Using \eqref{strong fin}, we obtain
\begin{equation}\label{limsup1}
\limsup_{l\to +\infty}\limsup_{n\to +\infty}\limsup_{m\to +\infty}\limsup_{s\to +\infty}\int_{\Omega} \h\left(\frac{x}{\eps_s},\nabla u_{s,m,n,l}(x)\right)dx\le\int_{\Omega} \Z\H\h(\nabla u(x))dx.
\end{equation}
We also have for every $s\ge 1$, $m\ge 1$, $n\ge n_t$ and $l\ge 1$
\begin{align*}
\int_{\Omega}\vert u_{s,m,n,l}(x)-u(x)\vert dx&\le \int_{\Omega}\vert u_{s,m,n,l}(x)-u_{m,n,l}(x)\vert dx\\
&+\int_{\Omega}\vert u_{m,n,l}(x)-\tau_lu_{n}(x)\vert dx+\int_{\Omega}\vert \tau_lu_{n}(x)-\tau_l u(x)\vert dx\\
&+\int_{\Omega}\vert \tau_lu(x)-u(x)\vert dx,
\end{align*}
where $u_{m,n,l}=\tau_l\left({\mathbb I}_{\Omega_n} v_{m,n}+{\mathbb I}_{\Omega\setminus\Omega_n}u_n\right)$. Letting $s\to +\infty$, $m\to +\infty$, $n\to +\infty$ and $l\to +\infty$, we obtain 
\begin{equation}\label{limsup2}
\limsup_{l\to +\infty}\limsup_{n\to +\infty}\limsup_{m\to +\infty}\limsup_{s\to +\infty}\Vert u_{s,m,n,l}-u\Vert_{1,\Omega}=0.
\end{equation}
Diagonalization arguments applying to \eqref{limsup1} and \eqref{limsup2} give a sequence $\{u_s\}_{s\ge 1}\subset \tau_{l(s)} u+W^{1,\infty}_0(\Omega;\RR^m)$ with $s\mapsto\tau_{l(s)}$ a subsequence of $\{\tau_l\}_{l\in\NN^*}$ such that 
\[
\left\{\begin{array}{ll}
\displaystyle\limsup_{s\to +\infty} \int_\Omega \h\left(\frac{x}{\eps_s},\nabla u_s(x)\right)dx\le\int_{\Omega} \Z\H\h(\nabla u(x))dx\\ \\
\tau_{l(s)} u+W^{1,\infty}_0(\Omega;\RR^m)\ni u_s\to u\hbox{ in }L^{1}\hbox{ as }s\to +\infty.
\end{array}
\right.
\]
\end{proof}
\subsubsection{Proof of the $\Gamma\mbox{-}\limsup_{\eps\to 0}I_\eps$ and $\Gamma\mbox{-}\limsup_{\eps\to 0}J_\eps$} The proof of the upper bound follows by using Proposition~\ref{upper bound prop}, the following lemma, and Theorem \ref{main result 2}. For simplicity the following lemma is stated for $I_\eps$, however it is working for $J_\eps$ with the necessary changes.
\begin{lemma}\label{strong envelope} Assume that {\rm (H$_1$)} and {\rm (H$_2$)} hold. If for every $u\in W^{1,\infty}(\Omega;\RR^m)$ and $t\in [0,1[$~such that $\nabla u(x)\in t\overline{\C}$ a.e. in $\Omega$ it holds
\[
\Gamma\mbox{-}\limsup_{\eps\to 0} I_{\eps}(u)\le \int_\Omega \Z\H\h(\nabla u(x))dx.
\]
Then for every $u\in W^{1,\infty}(\Omega;\RR^m)$
\[
\Gamma\mbox{-}\limsup_{\eps\to 0} I_{\eps}(u)\le \int_\Omega\widehat{\Z\H\h}(\nabla u(x))dx.
\]
\end{lemma}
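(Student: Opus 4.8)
The plan is to extend the estimate from functions with gradient compactly inside $\C$ (i.e. $\nabla u \in t\overline{\C}$ for some $t<1$) to arbitrary $u\in W^{1,\infty}(\Omega;\RR^m)$ by a truncation/dilation argument, controlling the discrepancy via the radial upper semicontinuity hypothesis (H$_1$). First, one observes that if $\nabla u(x)\notin\overline{\C}$ on a set of positive measure, then $\widehat{\Z\H\h}(\nabla u(x))=+\infty$ there (by the definition of $\widehat{\Z\H\h}$, whose third branch is $+\infty$ outside $\overline{\C}$, recalling $\inte\overline{\C}=\inte\C$ by the l.s.p.), so the right-hand side is $+\infty$ and there is nothing to prove. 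Hence we may assume $\nabla u(x)\in\overline{\C}$ a.e. in $\Omega$, and we split into two remaining cases according to whether $\nabla u(x)\in\inte\C$ a.e. or whether $\nabla u$ meets $\partial\C$.

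For general $u$ with $\nabla u\in\overline{\C}$ a.e., consider the dilated functions $u_\tau := \tau u$ for $\tau\in[0,1[$. By the l.s.p., $\nabla u_\tau(x)=\tau\nabla u(x)\in\tau\overline{\C}\subset\inte\C$ a.e., so the hypothesis applies and gives $\Gamma\text{-}\limsup_{\eps\to 0}I_\eps(u_\tau)\le\int_\Omega\Z\H\h(\tau\nabla u(x))dx$. Since $\Z\H\h$ is the quasiconvex envelope $\Q\H\h$ (it is finite on $\inte\C$ by Lemma \ref{decomposition}\ref{decomposition4}, so Lemma \ref{quasifinite} applies), and $\Q\H\h=\overline{\H\h}$ inherits radial upper semicontinuity from $\h$ — this is exactly what Corollary \ref{exist lim}\ref{ruuscHp} provides, so the limit $\lim_{\tau\to 1}\Z\H\h(\tau\nabla u(x))$ exists and equals $\widehat{\Z\H\h}(\nabla u(x))$ pointwise a.e. (using the interior branch where $\nabla u\in\inte\C$ and the boundary-limit branch where $\nabla u\in\partial\C$). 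One then passes to the limit $\tau\to 1$ inside the integral on the right via monotone convergence (or Lebesgue dominated convergence on the part where $\nabla u\in\inte\C$, using (H$_2$) and a local bound). For the left-hand side, one uses lower semicontinuity of the $\Gamma\text{-}\limsup$ functional under $L^1$-convergence together with $u_\tau\to u$ in $L^1$ as $\tau\to 1$; combined with a diagonalization this yields $\Gamma\text{-}\limsup_{\eps\to 0}I_\eps(u)\le\int_\Omega\widehat{\Z\H\h}(\nabla u(x))dx$.

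The main obstacle I expect is the passage to the limit $\tau\to 1$ on both sides simultaneously: the $\Gamma\text{-}\limsup$ is not a priori known to be $L^1$-lower semicontinuous in a form strong enough to close the argument directly, so one must instead extract, for each $\tau$, an explicit recovery sequence from Proposition \ref{upper bound prop} (which already produces sequences $u_s\in\tau_s u+W^{1,\infty}_0$ with $\tau_s\to 1$ and the right energy bound $\le\int_\Omega\Z\H\h(\nabla u)dx$ — note this is stated there for $\nabla u\in t\overline{\C}$, so one reruns it) and then diagonalize over $\tau\to 1$ and $\eps\to 0$. The care needed is that the dilation parameters $\tau_s$ from Proposition \ref{upper bound prop} already tend to $1$, so applying it directly to $u$ (not to $u_\tau$) with the given $\{\eps_s\}$ gives a sequence $u_s\to u$ in $L^1$ with $\limsup_s\int_\Omega\h(x/\eps_s,\nabla u_s)dx\le\int_\Omega\Z\H\h(\nabla u)dx$; but the target is $\int_\Omega\widehat{\Z\H\h}(\nabla u)dx$, which coincides with $\int_\Omega\Z\H\h(\nabla u)dx$ when $\nabla u\in\inte\C$ a.e., and otherwise is handled by the approximation $u_\tau\to u$ plus the radial-continuity limit identity for $\widehat{\Z\H\h}$ on $\partial\C$. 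Finally one must check the $\Gamma\text{-}\limsup$ inequality is preserved under $L^1$-limits of the approximating sequence, which follows from the standard fact that $\Gamma\text{-}\limsup$ is $L^1$-lower semicontinuous, or more concretely by a further diagonalization using a metric for $L^1$-convergence.
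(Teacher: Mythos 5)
Your proposal follows essentially the same route as the paper's proof: dilate ($u_\tau=\tau u$, so that $\nabla u_\tau\in\tau\overline{\C}\subset\inte\C$ by the l.s.p.), apply the hypothesis to $u_\tau$, control the right-hand side as $\tau\to1$ through Corollary \ref{exist lim}, and handle the left-hand side via the $L^1$-lower semicontinuity of $\Gamma$-$\limsup_{\eps\to 0}I_\eps$ together with $u_\tau\to u$ in $L^1$. The only adjustments worth noting are cosmetic: instead of monotone or dominated convergence (monotonicity in $\tau$ is unavailable), the paper closes the $\tau\to1$ passage with the uniform r.u.u.s.c.\ estimate of Corollary \ref{exist lim} --- for every $\eps>0$ there is $\eta>0$ with $\Z\H\h(t\xi)\le \widehat{\Z\H\h}(\xi)+\eps$ for all $\xi\in\overline{\C}$ and $1-t<\eta$ --- which yields the integral bound up to $\eps\vert\Omega\vert$ directly, and your side remark that Lemma \ref{quasifinite} identifies $\Z\H\h$ with $\Q\H\h$ is not justified as stated (since $\Z\H\h$ is not finite everywhere) but is also not needed for the argument.
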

\begin{proof} Without loss of generality, assume that
\[
\int_\Omega \widehat{\Z\H\h}(\nabla u(x))dx<+\infty
\]
with $u\in W^{1,\infty}(\Omega;\RR^m)$. We deduce that $\nabla u(x)\in \overline{\C}$ a.e. in $\Omega$ by Corollary \ref{exist lim}. Let $n\ge 1$. Set $u_n=(1-{1\over n})u\in W^{1,\infty}(\Omega;\RR^m)$, then $u_n\to u$ in $L^{1}$. By $\lsp$, it holds that $\nabla u_n(x)\in (1-{1\over n})\overline{\C}\subset\inte\C$ a.e. in $\Omega$. Let $\eps>0$. By Corollary \ref{exist lim}, there exists $\eta>0$ such that for every $\xi\in\overline{\C}$ and $t\in [0,1[$, if $1-t<\eta$ then $\Z\H\h(t\xi)\le \widehat{\Z\H\h}(\xi)+\eps$. Choose $n_\eps\in\NN^*$ such that $1-{1\over n}<\eta$ for all $n\ge n_\eps$. Thus, for every $n\ge n_\eps$
\[
\Gamma\mbox{-}\limsup_{\eps\to 0} I_{\eps}(u_n)\le \int_\Omega \Z\H\h(\nabla u_n(x))dx\le \eps\vert\Omega\vert+\int_\Omega \widehat{\Z\H\h}(\nabla u(x))dx.
\]
Letting $n\to +\infty$ and using the $L^1$ sequential lower semicontinuity of $\displaystyle\Gamma\mbox{-}\limsup_{\eps\to 0} I_{\eps}$, we deduce
\[
\Gamma\mbox{-}\limsup_{\eps\to 0} I_{\eps}(u)\le \eps\vert\Omega\vert+\int_\Omega \widehat{\Z\H\h}(\nabla u(x))dx.
\]
Since $\eps>0$ is arbitrary, we obtain the desired result.
\end{proof}
\section{Consequences of line segment principle and proof of Theorem \ref{main result 2}}\label{lineseg}
\subsection{Some properties of $\H\h$ and $\Z\H\h$}
\subsubsection{Some consequences of line segment principle}
\begin{lemma}\label{decomposition} We have 
\begin{enumerate}[label={\rm (\roman{*})}, ref={\rm (\roman{*})}]
\item For every increasing sequence $\{t_n\}_{n\in\NN^*}\subset [0,1[$ satisfying $\lim_{n\to +\infty} t_n=1$, it holds
\[
\inte\C={\bigcup_{t\in [0,1[}t\overline{\C}}={\bigcup_{n\in\NN^\star}t_n\overline{\C}},\hbox{ and }\inte\C={\bigcup_{t\in [0,1[}t\,\inte{\C}}={\bigcup_{n\in\NN^\star}t_n\inte{\C}}.
\]\label{decomposition1}
\item If $K\subset\inte\C$ is compact then $K\subset t\,\inte{\C}$ for some $t\in [0,1[$\label{decomposition2}.
\item The function $\h$ is locally bounded in $\inte\C$ if and only if 
\[
\sup\{\h(\cdot,\xi):\xi\in t\overline{\C}\}\in L^\infty_{\rm loc}(\RR^d)
\]
 for all $t\in [0,1[$ \label{decomposition3}.
\item Assume that {\rm (H$_2$)} holds. For every $t\in [0,1[$, it holds 
\[
\sup\{\H\h(\xi):\xi\in t\overline{\C}\}<+\infty.
\]\label{decomposition4}
\item Assume that $\h$ is locally bounded in $\inte\C$. Then {\rm(}${\rm H}_3${\rm )} holds if and only if there exists an increasing sequence such that $[0,1[\ni t_n\to 1$, and for every $n\ge 1$ and every $x\in \RR^d$ 
\[
\inf_{\xi\in \overline{\C}\setminus t_n\overline{\C}} \h(x,\xi)\ge n.
\]\label{decomposition5}
\item \label{decomposition6}  Assume that {\rm (H$_2$)} holds. Then 
\[
\inte\C\subset\dom\H\h\subset\dom\overline{\H\h}\subset\overline{\C}\;\mbox{ and }\;
\inte\C\subset\dom\H\h\subset\dom\Z\H\h\subset\overline{\C},
\]
\[
\inte(\dom\H\h)=\inte(\dom\overline{\H\h})=\inte(\dom\Z\H\h)=\inte(\dom\h(x,\cdot))=\inte\C\mbox{ a.e. in }Y.
\]
\end{enumerate}
\end{lemma}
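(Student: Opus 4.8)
The plan is to deduce all six items from the line segment principle (l.s.p.) recalled above, together with two elementary facts that will be used repeatedly: since $\overline{\C}$ is closed and bounded in the finite-dimensional space $\MM^{m\times d}$ it is compact, so by l.s.p.\ each $t\overline{\C}$ with $t\in[0,1[$ is a \emph{compact} subset of $\inte\C$; and for any $n$ and any $\phi\in W^{1,\infty}_0(nY;\RR^m)$ one has $\int_{nY}\nabla\phi\,dx=0$, so if $\xi+\nabla\phi(x)\in\overline{\C}$ a.e.\ then $\xi\in\overline{\C}$ by convexity (and likewise for $\psi\in\Aff_0(Y;\RR^m)$). I would establish the items in the order \ref{decomposition1}--\ref{decomposition6}, each one feeding into the next.

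For \ref{decomposition1}, the inclusions $\bigcup_{n}t_n\overline{\C}\subset\bigcup_{t\in[0,1[}t\overline{\C}\subset\inte\C$ are immediate from l.s.p.; conversely, for $\xi\in\inte\C$ one has $\xi/t\to\xi$ as $t\uparrow1$, hence $\xi/t\in\inte\C\subset\overline{\C}$ for $t$ close to $1$, so $\xi\in t_n\overline{\C}$ once $t_n$ is close enough to $1$, and the argument is verbatim with $\inte\C$ in place of $\overline{\C}$. For \ref{decomposition2}, by \ref{decomposition1} the nested family of open sets $\{t\,\inte\C\}_{t\in[0,1[}$ covers the compact $K$, so a finite subcover exists and, by nestedness, $K\subset t\,\inte\C$ for the largest $t$ occurring. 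For \ref{decomposition3}, if $\h$ is locally bounded in $\inte\C$ then the stated bound follows since $t\overline{\C}$ is a compact subset of $\inte\C$; conversely a compact $K\subset\inte\C$ sits in some $t\overline{\C}$ by \ref{decomposition2}, so $\sup_{\xi\in K}\h(\cdot,\xi)\le\sup_{\xi\in t\overline{\C}}\h(\cdot,\xi)\in L^\infty_{\rm loc}(\RR^d)$.

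For \ref{decomposition4}, taking $n=1$ and $\phi\equiv0$ in the definition of $\H\h$ gives $\H\h(\xi)\le\int_Y\h(x,\xi)\,dx$, and for $\xi\in t\overline{\C}$ this is at most $\int_Y g_t(x)\,dx$, where $g_t(x):=\sup_{\eta\in t\overline{\C}}\h(x,\eta)$ is $Y$-periodic and, by \ref{decomposition3}, lies in $L^\infty_{\rm loc}(\RR^d)$, hence in $L^\infty(Y)$; this bounds $\H\h$ uniformly on $t\overline{\C}$. For \ref{decomposition5}, the implication ($\Leftarrow$) is obtained, given $s>0$, by taking $K_s:=t_n\overline{\C}$ with $n=\lceil s\rceil$: it is compact, lies in $\inte\C$ by l.s.p., and $\overline{\C}\setminus K_s=\overline{\C}\setminus t_n\overline{\C}$, so the infimum there is $\ge n\ge s$. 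For ($\Rightarrow$), apply {\rm (H$_3$)} with $s=n$ to get a compact $K_n\subset\inte\C$, and by \ref{decomposition2} choose $s_n\in[0,1[$ with $K_n\subset s_n\overline{\C}$; the one slightly delicate point is the bookkeeping to make the sequence usable: set $t_n:=\max\{s_1,\dots,s_n,1-\tfrac1n\}$, which is nondecreasing, lies in $[0,1[$ and tends to $1$, and since $K_n\subset s_n\overline{\C}\subset t_n\overline{\C}$ and enlarging $t_n$ only shrinks $\overline{\C}\setminus t_n\overline{\C}$, the bound $\inf_{\xi\in\overline{\C}\setminus t_n\overline{\C}}\h(x,\xi)\ge\inf_{\xi\in\overline{\C}\setminus K_n}\h(x,\xi)\ge n$ is preserved for every $x$.

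For \ref{decomposition6}, $\inte\C\subset\dom\H\h$ follows from \ref{decomposition1} and \ref{decomposition4}; $\dom\H\h\subset\dom\overline{\H\h}$ and $\dom\H\h\subset\dom\Z\H\h$ hold because $\overline{\H\h}$ (as lower semicontinuous envelope) and $\Z\H\h$ (take $\psi\equiv0$ in \eqref{defzf}) lie pointwise below $\H\h$; $\dom\H\h\subset\overline{\C}$ by the second elementary fact above (finiteness of $\H\h(\xi)$ produces an admissible $\phi$ with $\xi+\nabla\phi\in\overline{\C}$ a.e., then average), and likewise $\dom\Z\H\h\subset\overline{\C}$ (finiteness of $\Z\H\h(\xi)$ produces $\psi\in\Aff_0(Y;\RR^m)$ with $\xi+\nabla\psi\in\dom\H\h\subset\overline{\C}$ a.e., then average). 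Since $\overline{\C}$ is closed, on its open complement $\H\h$ is identically $+\infty$, hence so is its lower semicontinuous envelope, giving $\dom\overline{\H\h}\subset\overline{\C}$. Applying $\inte$ to each sandwich $\inte\C\subset\dom(\cdot)\subset\overline{\C}$ and using $\inte\overline{\C}=\inte\C$ from l.s.p.\ yields $\inte(\dom\H\h)=\inte(\dom\overline{\H\h})=\inte(\dom\Z\H\h)=\inte\C$; for $\inte(\dom\h(x,\cdot))$, invoke the Remark after the hypotheses (under {\rm (H$_2$)}, $\inte\C\subset\dom\h(x,\cdot)\subset\overline{\C}$ for a.e.\ $x\in Y$) and run the same squeeze. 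The only real obstacles here are the monotonization step in \ref{decomposition5} and the care in \ref{decomposition6} needed to prevent the lower semicontinuous envelope from leaving $\overline{\C}$; everything else is a routine unwinding of the line segment principle.
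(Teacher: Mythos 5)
Your proof is correct, and on items (i)--(iv) and the easy direction of (v) it coincides with the paper's argument (which quotes the line segment principle and the bound $\sup_{\xi\in t\overline{\C}}\H\h(\xi)\le\int_Y\sup_{\xi\in t\overline{\C}}\h(x,\xi)\,dx$). Two steps are handled by genuinely different routes. For the forward direction of (v) the paper first makes the compacts $K_n$ nested, proves $\bigcup_nK_n=\inte\C$ using (H$_2$) (a point of $\inte\C$ outside the union would force $\h(\cdot,\xi_0)\equiv+\infty$), and only then deduces $t_n\to1$ from the covering property and the l.s.p.; you instead force $t_n\to1$ outright by setting $t_n=\max\{s_1,\dots,s_n,1-\frac1n\}$ and noting that enlarging $t_n$ only shrinks $\overline{\C}\setminus t_n\overline{\C}$ --- this is shorter and, as a bonus, does not use the local boundedness hypothesis at all in that direction. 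For the inclusions $\dom\overline{\H\h}\subset\overline{\C}$ and $\dom\Z\H\h\subset\overline{\C}$ in (vi), the paper introduces the convex (hence quasiconvex) minorants $d_n(\xi)=n\,\dist(\xi,\overline{\C})$, which lie below $\h(x,\cdot)$, hence below $\H\h$, $\overline{\H\h}$ and $\Z\H\h$, and whose supremum blows up off $\overline{\C}$; you argue instead by averaging ($\int\nabla\phi\,dx=0$ for zero-boundary test functions, so finiteness of $\H\h(\xi)$, resp.\ $\Z\H\h(\xi)$, exhibits $\xi$ as the mean of a field valued a.e.\ in the closed convex set $\overline{\C}$), and you get $\dom\overline{\H\h}\subset\overline{\C}$ from the fact that $\H\h\equiv+\infty$ on a whole neighbourhood of any point outside $\overline{\C}$. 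Both mechanisms are sound; the paper's minorant trick yields in passing a family of quasiconvex lower bounds, while yours is more elementary. Two cosmetic repairs: in (ii) restrict the cover to $t\in\,]0,1[$ so every member $t\,\inte\C$ is open before invoking compactness, and in (v) your $t_n$ is only nondecreasing --- if a strictly increasing sequence is wanted, pass to a subsequence $t_{\sigma(n)}$ with $\sigma(n)\ge n$, which preserves the bound $\inf_{\overline{\C}\setminus t_{\sigma(n)}\overline{\C}}\h(x,\cdot)\ge\sigma(n)\ge n$. Neither point affects the validity of the argument.
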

\begin{proof}
Items \ref{decomposition1} and \ref{decomposition2} are direct consequences of $\lsp$, see \cite{oah10} for a proof. Item \ref{decomposition3} is a consequence of \ref{decomposition1}, \ref{decomposition2} and (H$_2$). Item \ref{decomposition4} follows by using \ref{decomposition3}
\begin{equation}\label{unif}
\sup_{\xi\in t\overline{\C}}\H\h(\xi)\le \int_Y\sup_{\xi\in t\overline{\C}}\h(x,\xi)dx<+\infty.
\end{equation}

The proof of item \ref{decomposition5} is given following the lines of the proof of Lemma 2.1. (iv) in \cite{oah10}. By {\rm (H$_3$)}, we can find a sequence of compact set $\{K_n\}_{n\in\NN^*}\subset \inte\C$ such that for every $x\in\RR^d$
\[
K_1\subset K_2\subset \cdots\subset K_n\subset\cdots\subset\bigcup_{n\in\NN^*}K_n=K_\infty,\;\hbox{ and } \inf_{\overline{\C}\setminus K_n}\h(x,\cdot)\ge n \hbox{ for all }n\ge 1.
\]
Thus $\inf_{\overline{\C}\setminus K_\infty}\h(x,\cdot)=+\infty$ for all $x\in\RR^d$. Assume that $K_\infty\not=\inte\C$, then there exists $\xi_0\in\inte\C\setminus K_\infty$ such that $\h(x,\xi_0)=+\infty$ for all $x\in\RR^d$, which is impossible since {\rm (H$_2$)}. Thus $K_\infty=\inte\C$. By \ref{decomposition2}, we can build an increasing sequence $\{t_n\}_{n\in\NN^*}\subset [0,1[$ such that $K_n\subset t_n\overline{\C}$ for all $n\in\NN^*$. It follows that $\inte\C=K_\infty=\cup_{n\ge 1}t_n\overline{\C}$ and therefore the sequence $t_n\to 1$ as $n\to +\infty$, indeed we cannot have $\tau=\sup_{n\ge 1}t_n=\lim_{n\to +\infty}t_n<1$, otherwise, by $\lsp$ $\inte\C\subset\tau\inte\C$ which is impossible since $\inte\C\not=\emptyset$. We also have for every $n\in\NN^*$ and every $x\in\RR^d$
\[
\inf_{\xi\in\overline{\C}\setminus t_n\overline{\C}} \h(x,\xi)\ge\inf_{\xi\in\overline{C}\setminus K_n}\h(x,\xi)\ge n.
\]
The other implication is easier. Let $s>0$. Let $n\ge s$ and choose $K_s=t_n\overline{\C}$ then $\inf_{\overline{\C}\setminus K_s}\h(x,\cdot)\ge n\ge s$ for all $x\in\RR^d$.

Let us prove \ref{decomposition6}. Let $\xi\in\inte\C$  then there exists $t\in [0,1[$ such that $\xi\in t\overline{\C}$. By definition of $\H\h$ and \eqref{unif}
\[
\overline{\H\h}(\xi)\le\H\h(\xi)<+\infty\;\mbox{ and }\;\Z{\H\h}(\xi)\le\H\h(\xi)<+\infty.
\]
We deduce 
\[
\inte\C\subset\dom\H\h\subset\dom\overline{\H\h}\;\mbox{ and }\;\inte\C\subset\dom{\H\h}\subset\dom\Z\H\h.
\]
It remains to prove that $\dom\overline{\H\h}\subset\overline{\C}$ and $\dom\Z\H\h\subset\overline{\C}$. For each $n\in\NN$, consider the function $d_n:\MM^{m\times d}\to [0,+\infty[$ defined by $ d_n(\xi)=n\dist(\xi,\overline{\C})$. It is easy to see that $d_n$ is lower semicontinuous, convex and then quasiconvex by Jensen inequality, and $d_n\le \h$ for all $n\in\NN$. Thus $d_n\le \overline{\H\h}$ and $d_n\le \Z\H\h$ for all $n\in\NN$, and the inclusions $\dom\overline{\H\h}\subset\overline{\C}$ and $\dom\Z\H\h\subset\overline{\C}$
follow since
\[
\sup_{n\in\NN}d_n(\xi)=
\left\{
\begin{array}{cl}
 0 &\hbox{ if }\xi\in\overline{\C},   \\
  +\infty&\hbox{ otherwise.}
\end{array}
\right.
\]
The third sequence of equalities follows by applying $\lsp$. The proof is finished.
\end{proof}


\subsubsection{Extension of radially uniformly upper semicontinuous functions}
\begin{definition} Let $U\subset\RR^d$ be a measurable set. We say that $f:U\times \MM^{m\times d}\to [0,+\infty]$ is {\em radially uniformly upper semicontinuous (r.u.u.s.c.)} in $A\subset\MM^{m\times d}$ if  
\begin{equation}\label{ruusc}
\forall \eps>0\;\exists \eta>0 \;\forall (x,\xi)\in U\times A\; \forall t\in [0,1[\quad
1-t<\eta\Longrightarrow f(x,t\xi)\le f(x,\xi)+\eps.
\end{equation}
If $f$ does not depend on $x$, it is {\em r.u.u.s.c.} in $A$ if $x$ is removed in \eqref{ruusc}.
\end{definition}
Here is an extension result for r.u.u.s.c. functions.
\begin{proposition}\label{extension} Let $D\subset\MM^{m\times d}$ be a set and let $f:\MM^{m\times d}\to [0,+\infty]$ be r.u.u.s.c. in $D\subset\MM^{m\times d}$. Assume that 
\begin{enumerate}[label={\rm(\roman{*})}, ref={\rm(\roman{*})}]
\item $t \overline{D}\subset D$ for all $t\in [0,1[$ \label{retract};
\item $f$ is lower semicontinuous in $D$\label{fsci}.
\end{enumerate}
Then there exists $\widehat{f}$ r.u.u.s.c. in $\overline{D}$ such that $f=\widehat{f}$ in $D$. Moreover $\widehat{f}$ is given by
\[
\widehat{f}(\xi)=\left\{
\begin{array}{ll}
f(\xi)  &  \mbox{ if }\xi\in D  \\
\displaystyle\lim_{[0,1[\ni t\to 1}f(t\xi) &  \mbox{ if }\xi\in \partial D   \\
  +\infty &   \mbox{ otherwise. }
\end{array}
\right.
\]
\end{proposition}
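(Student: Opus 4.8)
The plan is to verify that the explicitly given candidate $\widehat f$ is well-defined, agrees with $f$ on $D$, and is r.u.u.s.c.\ on $\overline D$. First I would check that the limit defining $\widehat f$ on $\partial D$ exists. For $\xi\in\partial D$, hypothesis \ref{retract} gives $t\xi\in D$ for all $t\in[0,1[$ (since $\xi\in\overline D$ implies $t\xi=t\cdot\xi\in t\overline D\subset D$), so $t\mapsto f(t\xi)$ is defined on $[0,1[$; the existence of $\lim_{t\to 1}f(t\xi)$ should follow from the r.u.u.s.c.\ property of $f$ on $D$ applied to the points $t\xi$ — indeed, for $s,t$ both close to $1$ one writes $s\xi = (s/t)(t\xi)$ with $1-(s/t)$ small, giving a Cauchy-type estimate $f(s\xi)\le f(t\xi)+\eps$ and symmetrically, whence the limit exists in $[0,+\infty]$. (This is presumably already recorded in the paper as a corollary of r.u.u.s.c., cited elsewhere as ``\texttt{exist lim}''.) Agreement $\widehat f=f$ on $D$ is immediate from the first line of the definition, and $\widehat f=+\infty$ off $\overline D$ causes no trouble.

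Next I would establish that $\widehat f$ is r.u.u.s.c.\ on $\overline D$, i.e.\ for every $\eps>0$ produce $\eta>0$ so that $1-t<\eta$ implies $\widehat f(\xi')\le\widehat f(t\xi')+\eps$ — wait, the correct direction is $\widehat f(t\xi)\le\widehat f(\xi)+\eps$ — for all $\xi\in\overline D$ and $t\in[0,1[$. Fix $\eps>0$ and take the $\eta$ coming from r.u.u.s.c.\ of $f$ on $D$ for the value $\eps/2$ (say). The case $\xi\in D$ is exactly the hypothesis, since then $t\xi\in D$ too (by \ref{retract}, as $t\xi\in t\overline D\subset D$) and $\widehat f=f$ on $D$. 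The remaining case is $\xi\in\partial D$: then $t\xi\in D$, so $\widehat f(t\xi)=f(t\xi)$, and I must bound $f(t\xi)$ by $\widehat f(\xi)+\eps=\lim_{\sigma\to 1}f(\sigma\xi)+\eps$. For $\sigma\in[0,1[$ close to $1$ with $t/\sigma$ still satisfying $1-t/\sigma<\eta$, apply r.u.u.s.c.\ of $f$ at the point $\sigma\xi\in D$ with ratio $t/\sigma$: $f(t\xi)=f\bigl((t/\sigma)(\sigma\xi)\bigr)\le f(\sigma\xi)+\eps/2$. Letting $\sigma\to 1$ gives $f(t\xi)\le\widehat f(\xi)+\eps/2\le\widehat f(\xi)+\eps$. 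One must check the elementary fact that if $1-t<\eta'$ for a suitably shrunk $\eta'$ then $1-t/\sigma<\eta$ for all $\sigma$ sufficiently near $1$; this is a routine choice of constants.

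Finally, although the statement only asserts r.u.u.s.c.\ of $\widehat f$ and equality with $f$ on $D$, the hypothesis \ref{fsci} (lower semicontinuity of $f$ on $D$) is presumably used to guarantee that $\widehat f$ is a genuine extension of the ``relaxed'' object one cares about — concretely, that $\widehat f$ is lower semicontinuous on $\overline D$, or at least that $\widehat f(\xi)\le\liminf_{D\ni\xi'\to\xi}f(\xi')$ on $\partial D$. I would record this: for $\xi\in\partial D$ and any sequence $D\ni\xi_k\to\xi$, combine lower semicontinuity of $f$ along the rays $t\xi_k$ with the r.u.u.s.c.\ estimates to compare $\widehat f(\xi)=\lim_t f(t\xi)$ with $\liminf_k f(\xi_k)$. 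The main obstacle I anticipate is precisely this last point — the interaction of the radial limit defining $\widehat f$ on the boundary with ordinary (non-radial) approach to $\partial D$ — together with the bookkeeping of nested quantifiers needed to make the uniform $\eta$ work simultaneously in the ``$\xi\in D$'' and ``$\xi\in\partial D$'' cases; the r.u.u.s.c.\ verification itself is otherwise a direct manipulation of the defining inequality via the semigroup identity $(t/\sigma)(\sigma\xi)=t\xi$.
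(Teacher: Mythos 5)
Your proposal is correct in substance and, for the most part, follows the paper's strategy (define $\widehat f$ by radial limits, then verify r.u.u.s.c.\ via the identity $t\xi=(t/\sigma)(\sigma\xi)$ along rays), but two points deserve comment. First, on the existence of the limit at $\xi\in\partial D$: your key estimate $f(s\xi)\le f(t\xi)+\eps$ for $s<t$ with $s/t$ close to $1$ is exactly the mechanism of the paper's Lemma \ref{limit widehat} (due to Wagner), but the phrase ``and symmetrically'' is not available --- r.u.u.s.c.\ only controls radial \emph{shrinking}, so there is no reverse bound $f(t\xi)\le f(s\xi)+\eps$ for the same pair, hence no genuine Cauchy estimate. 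The correct conclusion from the one-sided bound is $\limsup_{[0,1[\ni t\to 1}f(t\xi)\le\liminf_{[0,1[\ni t\to 1}f(t\xi)+\eps$, obtained by interleaving a $\limsup$-realizing sequence of radii \emph{below} a $\liminf$-realizing one; this is precisely what the paper does (with separate treatment of the cases where the values are infinite), and it repairs your sketch without changing its idea. Second, your verification that $\widehat f$ is r.u.u.s.c.\ at $\xi\in\partial D$ applies r.u.u.s.c.\ at the interior point $\sigma\xi$ with ratio $t/\sigma$ and then lets $\sigma\to 1$; since $1-t/\sigma\le 1-t$, the quantifier bookkeeping you worried about is in fact vacuous, and --- more interestingly --- this route never invokes hypothesis \ref{fsci}. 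The paper argues differently: it bounds $f(t\tau_n\xi)\le f(\tau_n\xi)+\eps$ with the \emph{fixed} ratio $t$ and then uses lower semicontinuity of $f$ at $t\xi\in D$ to pass to the limit in $f(t\tau_n\xi)\to$ (liminf) $\ge f(t\xi)$; so \ref{fsci} is genuinely used there, and again in Remark \ref{remarkwide} to obtain lower semicontinuity and radial continuity of $\widehat f$ on $\overline D$ --- which is what your last paragraph is groping toward, although it is not part of the statement to be proved. Net effect: your argument is slightly more economical on hypotheses for the stated conclusion, provided you replace the ``symmetric Cauchy'' claim by the one-sided $\limsup$--$\liminf$ comparison.
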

\begin{proof} Using Lemma \ref{limit widehat}, we define $\widehat{f}:\MM^{m\times d}\to [0,+\infty]$ by
\[
\widehat{f}(\xi)=\left\{
\begin{array}{ll}
f(\xi)  &  \mbox{ if }\xi\in D  \\
\displaystyle\lim_{[0,1[\ni t\to 1}f(t\xi) &  \mbox{ if }\xi\in \partial D   \\
  +\infty &   \mbox{ otherwise. }
\end{array}
\right.
\]
We have to show that $\widehat{f}$ is r.u.u.s.c. in $\overline{D}$. Let $\{\tau_n\}_{n\in\NN}\subset [0,1[$ be such that $\lim_{n\to +\infty}\tau_n=1$. Let $\eps>0$. There exists $\eta>0$ such that for every $n\in\NN$, every $t\in [0,1[$ and $\xi\in\overline{D}$ it holds, by taking account of \ref{retract},
\[
1-t\le\eta\implies f(t\tau_n\xi)\le f(\tau_n\xi)+\eps.
\]
Since \ref{fsci} and by Lemma \ref{limit widehat}, we deduce for every $t\in [0,1[$ and $\xi\in\overline{D}$
\[
1-t\le\eta\implies f(t\xi)\le \liminf_{n\to +\infty}f(t\tau_n\xi)\le\liminf_{n\to +\infty}f(\tau_n\xi)+\eps=\widehat{f}(\xi)+\eps.
\]
The proof is complete.
\end{proof}
The following lemma is essentially due to Wagner (see \cite{wagner09}).
\begin{lemma}\label{limit widehat} Let $f:\MM^{m\times d}\to [0,+\infty]$ be r.u.u.s.c. in $D\subset\MM^{m\times d}$. If \ref{retract} in Proposition \ref{extension} holds then for every $\xi\in\partial D$
\begin{equation}\label{liminfsup}
\limsup_{[0,1[\ni t\to 1}f(t\xi)=\liminf_{[0,1[\ni t\to 1}f(t\xi)\in [0,+\infty],
\end{equation}
In this case, we will denote by $\lim_{[0,1[\ni t\to 1}f(t\xi)$ the right or the left hand term in the inequality \eqref{liminfsup}.
\end{lemma}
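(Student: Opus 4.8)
The plan is to show that the limit superior and limit inferior of $t\mapsto f(t\xi)$ as $t\to 1^-$ coincide for $\xi\in\partial D$, using only the radial uniform upper semicontinuity of $f$ on $D$ together with the retraction property \ref{retract}. First I would observe that \ref{retract} guarantees $t\xi\in t\overline D\subset D$ for every $t\in[0,1[$ and every $\xi\in\overline D\supset\partial D$, so the quantity $f(t\xi)$ is well defined (and the r.u.u.s.c. estimate \eqref{ruusc} is applicable to the argument $\xi$ replaced by any $s\xi$ with $s\in[0,1[$, since $s\xi\in D$). Thus the statement makes sense.

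Denote $L=\limsup_{t\to 1^-}f(t\xi)$ and $\ell=\liminf_{t\to 1^-}f(t\xi)$; trivially $\ell\le L$ in $[0,+\infty]$, so it remains to prove $L\le\ell$. If $\ell=+\infty$ there is nothing to prove, so assume $\ell<+\infty$. Fix $\eps>0$ and let $\eta>0$ be the number furnished by \eqref{ruusc} for this $\eps$. Pick $s\in[0,1[$ with $1-s<\eta$ and $f(s\xi)\le\ell+\eps$; this is possible by definition of $\liminf$, choosing $s$ as close to $1$ as needed (in particular within the window $1-s<\eta$). Now for any $t\in[0,1[$ with $t\ge s$, write $t=\sigma s'$ is not quite what I want; instead note that for $t\in[s,1[$ we have $t\xi=\tau\,(s\xi)$ with $\tau=t/s\in[1,1/s)$ — that runs the wrong way. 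The correct move is the reverse: for $t\in[s,1[$, set $\tau=s/t\in(s,1]\subset[0,1[$ when $t<1$, wait $s/t\le 1$ and $s/t> s$; and $1-\tau=1-s/t\le 1-s<\eta$. Then $\tau\,(t\xi)=s\xi$, and since $t\xi\in D$ we may apply \eqref{ruusc} with the point $t\xi\in D$ in place of $\xi$ and the scalar $\tau$ in place of $t$ to get $f(\tau\,t\xi)\le f(t\xi)+\eps$, i.e. $f(s\xi)\le f(t\xi)+\eps$. That gives a lower bound on $f(t\xi)$, not an upper bound — so I must run the comparison in the opposite direction.

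The clean argument is: for $t\in[s,1[$ put $\tau=s/t\in[0,1[$ with $1-\tau<\eta$; apply \eqref{ruusc} to the point $t\xi\in D$ to obtain $f(s\xi)=f(\tau\cdot t\xi)\le f(t\xi)+\eps$. This is useless for bounding $L$. Instead, apply \eqref{ruusc} to the point $t'\xi\in D$ with scalar $t/t'$ where $s\le t\le t'<1$: then $f(t\xi)\le f(t'\xi)+\eps$ whenever $1-t/t'<\eta$, in particular whenever $t'\in[t,t/(1-\eta))$. Passing to $\limsup$ over such $t'\to 1^-$ on the right (legitimate since $t$ is fixed and one can let $t'\to 1$ staying in the allowed window once $t$ itself is close enough to $1$, say $1-t<\eta$) yields $f(t\xi)\le L+\eps$ — wrong direction again. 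So take $\liminf$ instead: for $1-t<\eta$ and $t'\in[t,1[$ with $1-t/t'<\eta$ we have $f(t\xi)\le f(t'\xi)+\eps$; letting $t'\to 1^-$ gives $f(t\xi)\le L+\eps$, and then $\limsup_{t\to1^-}f(t\xi)=L$, a tautology. The genuinely working comparison, and the one I would write up, is: choose $s$ with $1-s<\eta$ and $f(s\xi)\le\ell+\eps$; then for every $t\in[0,1[$ we have $1-st/\max(s,t)$-type bookkeeping, but more simply, for any $t\in[s,1[$ the scalar $\sigma:=s/t$ satisfies $\sigma\in[0,1[$, $1-\sigma<\eta$, and applying \eqref{ruusc} to the fixed point $\xi':=t\xi\in D$ and this $\sigma$ gives nothing new. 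I would therefore instead invoke \eqref{ruusc} with the roles arranged so that the \emph{larger} radius is the free variable: for $t$ with $1-t<\eta$, and any $t'\in[0,1[$, whenever $t=\sigma t'$ with $\sigma\in[0,1[$ and $1-\sigma<\eta$ (i.e. $t'\ge t> (1-\eta)t'$), we get $f(t\xi)\le f(t'\xi)+\eps$; now fix such a $t$ close to $1$ and let $t'\uparrow 1$ through the window, obtaining $f(t\xi)\le \liminf_{t'\to1^-}f(t'\xi)+\eps=\ell+\eps$. Taking $\limsup_{t\to1^-}$ on the left gives $L\le\ell+\eps$, and letting $\eps\downarrow 0$ gives $L\le\ell$, hence $L=\ell$. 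Note no use of lower semicontinuity is needed here, only \eqref{ruusc} and \ref{retract}.

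The main obstacle is purely bookkeeping: one must apply the r.u.u.s.c. estimate \eqref{ruusc} with its "base point" chosen as the \emph{outer} radius $t'\xi$ (which lies in $D$ by \ref{retract}) and its scaling parameter chosen so that $t=\sigma t'$ with $1-\sigma<\eta$, so that the inequality reads $f(t\xi)\le f(t'\xi)+\eps$ in the right direction; once that is set up, letting the outer radius tend to $1$ while keeping the inner radius fixed and close to $1$ forces $\limsup\le\liminf$. I expect the write-up to be four or five lines.
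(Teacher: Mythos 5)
Your final argument is correct and rests on exactly the same mechanism as the paper's proof: by \ref{retract} the outer point $t'\xi$ lies in $D$, so \eqref{ruusc} applied at $t'\xi$ with ratio $t/t'$ close to $1$ yields $f(t\xi)\le f(t'\xi)+\eps$, which forces $\limsup_{t\to 1}f(t\xi)\le\liminf_{t'\to 1}f(t'\xi)$. The only difference is organizational — the paper runs this inequality through sequences realizing the two limits and disposes of the case $\limsup=+\infty$, $\liminf<+\infty$ by a separate contradiction, while your direct ``window'' estimate handles all cases at once; in a final write-up keep only your last paragraph and delete the false starts.
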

\begin{proof}Let $\xi\in\partial D$. Set
$\lambda=\limsup_{[0,1[\ni t\to 1}f(t\xi)$ and $\mu=\liminf_{[0,1[\ni t\to 1}f(t\xi)$. If $\mu=+\infty$ then
\[
\lambda=\mu=\lim_{[0,1[\ni t\to 1}f(t\xi)=+\infty.
\]
Assume that $\mu<+\infty$. We have two possibilities, either $\lambda=+\infty$ or $\lambda<+\infty$.

Suppose that $\lambda=+\infty$. Consider two sequences $\{t_n\}_{n\in\NN^*}, \{\tau_n\}_{n\in\NN^*}\subset [0,1[$ such that $t_n\to 1$ and $\tau_n\to 1$ as $n\to +\infty$ satisfying
\[
\lambda=\lim_{n\to +\infty} f(t_n\xi)\hbox{ and }\mu=\lim_{n\to +\infty} f(\tau_n\xi).
\]
We can find two increasing functions $\sigma,\sigma^\prime:\NN^*\to\NN^*$ such that for every $n\in\NN^*$
\[
1-{1\over n}\le t_{\sigma(n)}< \tau_{\sigma^\prime(n)}<1.
\]
Let $\eps>0$. There exists $N_0\in\NN^*$ such that for every $n\ge N_0$ it holds
\begin{equation}\label{eq1 exist limit}
f(t_{\sigma(n)}\xi)\ge 1+\eps+\mu\;\hbox{ and }\;\vert f(\tau_{\sigma^\prime(n)}\xi)-\mu\vert\le{\eps\over 2}.
\end{equation}
Since $f$ is r.u.u.s.c. in $D$, there exists $\eta>0$ such that for every $\xi\in D$ and every $t\in [0,1[$ it holds
\begin{equation}\label{eq2 exist limit}
1-t\le\eta\implies f(t\xi)\le f(\xi)+{\eps\over 2}.
\end{equation}
Choose an integer $n\ge \max\{2,N_0, \eta^{-1}\}$. Then it holds that
\begin{equation}\label{eq3 exist limit}
\tau_{\sigma^\prime(n)}>0\;\hbox{ and }\;1-{t_{\sigma(n)}\over\tau_{\sigma^\prime(n)}}\le \eta.
\end{equation}
Therefore, by (\ref{eq1 exist limit}), (\ref{eq3 exist limit}), (\ref{eq2 exist limit}) and \ref{retract} we obtain
\begin{align*}
1+\eps+\mu &\le f(t_{\sigma(n)}\xi)-f(\tau_{\sigma^\prime(n)}\xi)+f(\tau_{\sigma^\prime(n)}\xi)\\
 &= \left(f\left({t_{\sigma(n)}\over\tau_{\sigma^\prime(n)}}\tau_{\sigma^\prime(n)}\xi\right)- f(\tau_{\sigma^\prime(n)}\xi)\right)+f(\tau_{\sigma^\prime(n)}\xi)\le \eps+\mu,
\end{align*}
which is impossible. It means that if $\mu<+\infty$ then $\lambda<+\infty$. 

Now, we will show that in this case $\mu=\lambda$. Consider $\{t_n\}_{n\in\NN^*}, \{\tau_n\}_{n\in\NN^*}\subset [0,1[$ such that $t_n\to 1$ and $\tau_n\to 1$ as $n\to +\infty$ satisfying
\[
\lambda=\lim_{n\to +\infty} f(t_n\xi)\hbox{ and }\mu=\lim_{n\to +\infty} f(\tau_n\xi).
\]
As above we can find two subsequences such that for every $n\in\NN^*$
\[
1-{1\over n}\le t_{\sigma(n)}< \tau_{\sigma^\prime(n)}<1.
\]
Let $\eps>0$. There exists $N_0\in\NN^*$ such that for every $n\ge N_0$ it holds
\begin{equation}\label{eq11 exist limit}
\vert f(t_{\sigma(n)}\xi)-\lambda\vert\le{\eps\over 3}\;\hbox{ and }\;\vert f(\tau_{\sigma^\prime(n)}\xi)-\mu\vert\le{\eps\over 3}.
\end{equation}
Since $f$ is r.u.u.s.c. in $D$, there exists $\eta>0$ such that for every $\xi\in D$ and every $t\in [0,1[$ it holds
\begin{equation}\label{eq21 exist limit}
1-t\le\eta\implies f(t\xi)\le f(\xi)+{\eps\over 3}.
\end{equation}
Choose an integer $n\ge \max\{2,N_0, \eta^{-1}\}$. Then it holds that
\begin{equation}\label{eq31 exist limit}
\tau_{\sigma^\prime(n)}>0\;\hbox{ and }\;1-{t_{\sigma(n)}\over\tau_{\sigma^\prime(n)}}\le \eta.
\end{equation}
Therefore, by (\ref{eq11 exist limit}), (\ref{eq31 exist limit}), (\ref{eq21 exist limit}) and \ref{retract} we obtain
\begin{align*}
0\le \lambda-\mu &= \lambda-f(t_{\sigma(n)}\xi)+f(t_{\sigma(n)}\xi)- f(\tau_{\sigma^\prime(n)}\xi)+ f(\tau_{\sigma^\prime(n)}\xi)-\mu\\
&\le {2\eps\over 3}+f\left({t_{\sigma(n)}\over\tau_{\sigma^\prime(n)}}\tau_{\sigma^\prime(n)}\xi\right)-f(\tau_{\sigma^\prime(n)}\xi)\le\eps.
\end{align*}
The proof is complete since $\eps>0$ is arbitrary.
\end{proof}
\begin{remark}\label{remarkwide}Here are some consequences of Proposition \ref{extension}.
\begin{enumerate} [label={\rm(\arabic{*})}, ref={\rm(\arabic{*})}]
\item \label{lscwidehat} The extension $\widehat{f}$ in Proposition \ref{extension} is lower semicontinuous in $\overline{D}$. Indeed, let $\xi\in\overline{D}$ and $\{\xi_n\}_{n\in\NN}\subset\overline{D}$ be such that $\xi_n\to\xi$ as $n\to +\infty$. Assume that $\liminf_{n\to +\infty}\widehat{f}(\xi_n)<+\infty$ (otherwise there is nothing to prove). Let $\eps>0$. There exists $\eta>0$ such that for every $t\in [0,1[$ and $n\in\NN$, $f(t\xi_n)\le \widehat{f}(\xi_n)+\eps$ whenever $1-t\le \eta$. Therefore, using \ref{retract} and \ref{fsci}
\[
\liminf_{n\to +\infty}\widehat{f}(\xi_n)+\eps\ge \liminf_{n\to +\infty}f(t\xi_n)\ge f(t\xi).
\]
Letting $t\to 1$ and $\eps\to 0$, we obtain $\liminf_{n\to +\infty}\widehat{f}(\xi_n)\ge \widehat{f}(\xi)$.
\item \label{radialcontinuity} As a consequence of \ref{lscwidehat}, the extension $\widehat{f}$ in Proposition \ref{extension} is radially continuous in $\overline{D}$, i.e., for every $\xi\in\overline{D}$
\[
\lim_{[0,1[\ni t\to 1}\widehat{f}(t\xi)=\widehat{f}(\xi).
\]
\end{enumerate} 
\end{remark}
\begin{corollary}\label{exist lim} Assume that {\rm (${\rm H}_1$)} and {\rm (${\rm H}_2$)} hold. Then 
\begin{enumerate}[label={\rm(\roman{*})}, ref={\rm(\roman{*})}]
\item $\H\h$ and $\Z\H\h$ are r.u.u.s.c. in their domains \label{ruuscHp};
\item $\Z\H\h$ admits a r.u.u.s.c. extension in $\overline{\C}$ denoted by $\widehat{\Z\H\h}$ which is given by
\[
\widehat{\Z\H\h}(\xi)=
\begin{cases}
\Z\H\h(\xi)&\mbox{ if }\xi\in\inte\C\\
\displaystyle\lim_{[0,1[\ni t\to 1}\Z\H\h(t\xi)&\mbox{ if }\xi\in\partial\C\\
+\infty&\mbox{ if }\xi\notin\overline{\C}.
\end{cases}
\]\label{ruuscextensionzh}
\end{enumerate}
\end{corollary}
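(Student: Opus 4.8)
The plan is to establish \ref{ruuscHp} directly from the definitions of $\H\h$ and $\Z\H\h$ by a scaling argument on admissible fields, and then to deduce \ref{ruuscextensionzh} as an immediate application of Proposition~\ref{extension}.

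For \ref{ruuscHp} I would first treat $\H\h$. Fix $\eps>0$; by (H$_1$) there is $\eta>0$, depending only on $\eps$, with $\h(x,t\zeta)\le\h(x,\zeta)+\eps/2$ for all $x\in\RR^d$, all $\zeta\in\dom\h(x,\cdot)$ and all $t\in[0,1[$ with $1-t\le\eta$. Given $\xi\in\dom\H\h$, choose $n\ge 1$ and $\phi\in W^{1,\infty}_0(nY;\RR^m)$ realizing $\frac{1}{n^d}\int_{nY}\h(x,\xi+\nabla\phi(x))\,dx\le\H\h(\xi)+\eps/2$; finiteness of this integral forces $\xi+\nabla\phi(x)\in\dom\h(x,\cdot)$ for almost every $x\in nY$. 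Since $t\phi\in W^{1,\infty}_0(nY;\RR^m)$, testing the definition of $\H\h(t\xi)$ with $t\phi$ and applying (H$_1$) pointwise under the integral gives $\H\h(t\xi)\le\frac{1}{n^d}\int_{nY}\h(x,t(\xi+\nabla\phi(x)))\,dx\le\H\h(\xi)+\eps$, since $|nY|=n^d$; as $\eta$ is independent of $\xi$, this is r.u.u.s.c. of $\H\h$ on $\dom\H\h$. The same scheme, now with $\H\h$ in the role of $\h$, affine fields $\psi\in\Aff_0(Y;\RR^m)$ in place of $\phi$, and the r.u.u.s.c. of $\H\h$ just obtained in place of (H$_1$), yields $\Z\H\h(t\xi)\le\Z\H\h(\xi)+\eps$ for $1-t$ small uniformly in $\xi\in\dom\Z\H\h$, that is, r.u.u.s.c. of $\Z\H\h$ on $\dom\Z\H\h$.

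For \ref{ruuscextensionzh} I would apply Proposition~\ref{extension} with $f=\Z\H\h$ and $D=\inte\C$. Its hypothesis \ref{retract} holds by the line segment principle, since $\overline{\inte\C}=\overline{\C}$ and $t\overline{\C}\subset\inte\C$ for $t\in[0,1[$, so that $t\overline{D}\subset D$. Its hypothesis \ref{fsci} holds because Lemma~\ref{decomposition}~\ref{decomposition6} identifies $\inte(\dom\Z\H\h)$ with $\inte\C$, whence Lemma~\ref{fonseca} makes $\Z\H\h$ continuous, in particular lower semicontinuous, on $\inte\C$; moreover $\Z\H\h$ is r.u.u.s.c. in $\inte\C$ by \ref{ruuscHp}. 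Proposition~\ref{extension} then delivers an r.u.u.s.c. extension of $\Z\H\h\lfloor_{\inte\C}$ to $\overline{D}=\overline{\C}$ which coincides with the claimed $\widehat{\Z\H\h}$, once one notes $\partial(\inte\C)=\overline{\C}\setminus\inte\C=\partial\C$, again by the line segment principle.

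I do not expect a genuine obstacle here: the corollary is essentially the assembly of Proposition~\ref{extension}, Lemma~\ref{fonseca} and Lemma~\ref{decomposition}. The two points needing care are (a) checking that the near-optimal competitors land in the relevant effective domain for almost every $x$, so that (H$_1$) --- respectively the r.u.u.s.c. of $\H\h$ --- may be applied under the integral sign, and (b) ensuring that the modulus $\eta$ produced at each stage is uniform in $\xi$, which is exactly what the r.u.u.s.c. property demands. The local boundedness hypothesis (H$_2$) enters only through Lemma~\ref{decomposition}~\ref{decomposition6}, which supplies $\inte(\dom\Z\H\h)=\inte\C$ (so that Lemma~\ref{fonseca} applies) and guarantees the domains are nonempty.
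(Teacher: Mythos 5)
Your argument is correct and follows essentially the same route as the paper: the scaling of near-optimal competitors ($t\phi$, respectively $t\psi$) combined with the uniform modulus from (H$_1$) to get r.u.u.s.c.\ of $\H\h$ and then of $\Z\H\h$, and then Proposition~\ref{extension} applied to $\Z\H\h\lfloor_{\inte\C}$ with $D=\inte\C$, using Lemma~\ref{decomposition}~\ref{decomposition6} and Lemma~\ref{fonseca} for the lower semicontinuity hypothesis and the line segment principle for $t\overline{D}\subset D$. Your explicit checks (that near-minimizers land in the relevant effective domains a.e.\ and that $\eta$ is uniform in $\xi$) are exactly the points the paper's proof relies on.
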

\begin{proof} Let us prove \ref{ruuscHp}. Fix $\eps>0$. Since {\rm (H$_1$)}, there exists $\eta>0$ such that for every $t\in [0,1[$, $x\in\RR^d$ and $\xi\in\dom\h(x,\cdot)$, such that
\begin{equation}\label{minruuscHp0}
1-t\le \eta\Longrightarrow \h(x,t\xi)\le \h(x,\xi)+\eps.
\end{equation}
Let $\xi\in \dom\H\h$. There exist $n_0\ge 1$ and $\phi_0\in W^{1,\infty}_0(n_0Y;\RR^m)$ such that
\begin{equation}\label{minruuscHp}
+\infty>\H\h(\xi)+\frac{\eps}{2}\ge \frac{1}{n_0^d}\int_{n_0Y} \h(x,\xi+\nabla \phi_0(x))dx.
\end{equation}
It follows that $\xi+\nabla\phi_0(x)\in\dom\h(x,\cdot)$ a.e. in $n_0Y$. Using \eqref{minruuscHp0}, we have 
\[
\h(x,t(\xi+\nabla\phi_0(x)))\le \h(x,\xi+\nabla\phi_0(x))+\frac{\eps}{2}\mbox{ a.e. in }n_0Y,
\]
for all $t\in [0,1[$ such that $1-t\le \eta$. 
Then 
\[
\frac{1}{n_0^d}\int_{n_0Y}\h(x,t(\xi+\nabla\phi_0(x)))dx\le \frac{1}{n_0^d}\int_{n_0Y} \h(x,\xi+\nabla \phi_0(x))dx+\frac{\eps}{2}
\]
and by \eqref{minruuscHp}, we have
\[
\H\h(t\xi)\le \frac{1}{n_0^d}\int_{n_0Y}\h(x,t(\xi+\nabla\phi_0(x)))dx\le \H\h(\xi)+\eps
\]
for all $t\in [0,1[$ such that $1-t\le \eta$. Using the same type of arguments and that $\H\h$ is r.u.u.s.c. in $\dom\H\h$, it is easy to show that $\Z\H\h$ is r.u.u.s.c. in $\dom\Z\H\h$.

By \ref{ruuscHp}, Lemma \ref{decomposition} \ref{decomposition6} and Lemma \ref{fonseca}, $\Z\H\h$ is r.u.u.s.c. in $\inte\C$ and is continuous in $\inte\C$. The proof of \ref{ruuscextensionzh} follows by applying Proposition \ref{extension}.
\end{proof}

From Corollary \ref{exist lim}, Lemma \ref{fonseca} and Theorem \ref{main result 2}, we deduce
\begin{corollary}\label{cor4} Assume that {\rm (${\rm H}_1$)}, {\rm (${\rm H}_2$)} and {\rm (${\rm H}_3$)}. If there exists a convex function $\psi:\MM^{m\times d}\to [0,+\infty]$ satisfying 
\begin{itemize}
\item[$\diamond$] $\h(x,\xi)\ge \psi(\xi)$ for all $(x,\xi)\in\RR^d\times\MM^{m\times d}$;
\item[$\diamond$] $\limsup_{[0,1[\ni t\to 1}\psi(t\xi)=+\infty$ for all $\xi\in \partial\C$.
\end{itemize}
Then $\overline{\H\h}$ is quasiconvex and continuous, and for every $\xi\in\MM^{m\times d}$
\[
\overline{\H\h}(\xi)=
\begin{cases}
\Z\H\h(\xi)&\mbox{ if }\xi\in\inte\C\\
+\infty&\mbox{ otherwise.}
\end{cases}
\]
\end{corollary}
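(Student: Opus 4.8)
The plan is to deduce the statement from Theorem~\ref{main result 2}~\ref{mainresult24}, which already gives $\overline{\H\h}=\Q\H\h=\widehat{\Z\H\h}$, so that the whole task reduces to showing that the boundary branch $\lim_{[0,1[\ni t\to1}\Z\H\h(t\xi)$ equals $+\infty$ for every $\xi\in\partial\C$. Once this is in hand, $\widehat{\Z\H\h}$ collapses to $\Z\H\h$ on $\inte\C$ and to $+\infty$ off $\inte\C$, which is precisely the asserted formula, and quasiconvexity and continuity will follow from facts about $\Q\H\h$ and $\Z\H\h$ already recorded in the paper.

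The first step is the minorization $\Z\H\h\ge\psi$ on $\inte\C$. Since $\h(x,\cdot)\ge\psi$ for every $x$ and $\psi$ is convex — hence finite and continuous on the open set $\inte\C$, because $\psi\le\h(x,\cdot)$ and $\inte\C\subset\dom\h(x,\cdot)$ for a.e.\ $x$ — Jensen's inequality applied on a cell $nY$, together with $\int_{nY}\nabla\phi\,dx=0$ for $\phi\in W^{1,\infty}_0(nY;\RR^m)$, gives $\H\h(\eta)\ge\psi(\eta)$ for $\eta\in\inte\C$; one further application of Jensen on $Y$ upgrades this to $\Z\H\h(\eta)\ge\psi(\eta)$ for $\eta\in\inte\C$.

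The second and central step is to run this bound along a ray. Fix $\xi\in\partial\C$. By the line segment principle $t\xi\in\inte\C$ for all $t\in[0,1[$, so $\Z\H\h(t\xi)$ is finite, and by Corollary~\ref{exist lim} (which rests on Lemma~\ref{limit widehat}) the radial limit $\lim_{[0,1[\ni t\to1}\Z\H\h(t\xi)$ exists in $[0,+\infty]$. On the other hand $t\mapsto\psi(t\xi)$ is convex and finite on $[0,1[$, hence has a limit as $t\to1^-$ which coincides with its $\limsup$, namely $+\infty$ by hypothesis. Since $\Z\H\h(t\xi)\ge\psi(t\xi)$ by the first step, the radial limit of $\Z\H\h$ along $\xi$ is $+\infty$; that is, $\widehat{\Z\H\h}(\xi)=+\infty$ on $\partial\C$. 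Inserting this into Theorem~\ref{main result 2}~\ref{mainresult24} yields
\[
\overline{\H\h}(\xi)=\begin{cases}\Z\H\h(\xi)&\text{if }\xi\in\inte\C,\\[2pt]+\infty&\text{otherwise.}\end{cases}
\]

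Finally, quasiconvexity of $\overline{\H\h}$ is immediate: $\overline{\H\h}=\Q\H\h$ is lower semicontinuous and satisfies Morrey's inequality (the remark after the definition of $\Q f$, and equivalently Corollary~\ref{co}). Continuity as a $[0,+\infty]$-valued map is then obtained by combining three facts: on the open set $\inte\C$ one has $\overline{\H\h}=\Z\H\h$, continuous there by Lemma~\ref{fonseca} and the identification $\inte(\dom\Z\H\h)=\inte\C$ from Lemma~\ref{decomposition}~\ref{decomposition6}; off $\inte\C$ the function is identically $+\infty$; and $\overline{\H\h}$ is lower semicontinuous, so for any $\xi_0\notin\inte\C$ and any $\xi_n\to\xi_0$ one gets $\liminf_n\overline{\H\h}(\xi_n)\ge\overline{\H\h}(\xi_0)=+\infty$, hence continuity at $\xi_0$ too. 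The only genuinely delicate point is the radial-limit step: one must first know, via Corollary~\ref{exist lim}, that $\Z\H\h(t\xi)$ converges as $t\to1$, for only then does the pointwise inequality $\Z\H\h(t\xi)\ge\psi(t\xi)$ force divergence to $+\infty$.
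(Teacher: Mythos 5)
Your proposal is correct and follows essentially the route the paper intends: the paper itself only records that Corollary~\ref{cor4} ``follows from Corollary~\ref{exist lim}, Lemma~\ref{fonseca} and Theorem~\ref{main result 2}'', and your argument is exactly that deduction, with the convex minorant $\psi$ used along radial segments $t\xi\in\inte\C$ to force $\widehat{\Z\H\h}=+\infty$ on $\partial\C$, Lemma~\ref{fonseca} plus Lemma~\ref{decomposition}~\ref{decomposition6} for continuity on $\inte\C$, and lower semicontinuity for extended-valued continuity off $\inte\C$. One small gloss: in upgrading $\H\h\ge\psi$ on $\inte\C$ to $\Z\H\h(\eta)\ge\psi(\eta)$ for $\eta\in\inte\C$, the piecewise-affine test gradients $\eta+\nabla\varphi$ may take values on $\partial\C$ (or where $\psi$ is not lower semicontinuous), where your first step gives no bound; this is repaired in one line by taking a supporting affine function $a\le\psi$ at the interior point $\eta$ (a subgradient exists there), noting $a\le\h(x,\cdot)$ everywhere and hence $a\le\H\h$ everywhere, so that $\int_Y\H\h(\eta+\nabla\varphi)\,dx\ge a(\eta)=\psi(\eta)$ --- equivalently, replace $\psi$ by its lower semicontinuous envelope, which agrees with $\psi$ on $\inte\C$.
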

\subsection{Proof of Theorem \ref{main result 2}}
We will divide the proof of Theorem 1.2 into two steps. Roughly, in the first step we construct a nondecreasing sequence $\{W_n\}_{n\in\NN^*}$ ($Y$-periodic with respect to its first variable and whose supremum  is equal to $W$) such that $\mathcal{H}W_n$ is of linear growth for all $n\ge 1$, and in the second step we prove that the supremum of $\mathcal{H}W_n$ is equal to $\overline{\mathcal{H}W}$.

The following lemma is a particular case of \cite[Theorem 4]{muller99} which is a generalization of a K. Zhang result \cite{zhang92} by S. M\"uller.
\begin{lemma}\label{Muller} Let $K\subset\MM^{m\times d}$ be a compact convex set. Let $\Omega\subset\RR^d$ be a bounded open set with Lipschitz boundary. Let $\{\phi_n\}_{n\in\NN}\subset W^{1,\infty}(\Omega;\RR^m)$ be a sequence satisfying
\[
\phi_n\to \phi_\infty \hbox{ in }L^1(\Omega;\RR^m)\quad\hbox{ and }\quad\int_\Omega \dist(\nabla \phi_n(x),K)dx\to 0.
\]
Then $\phi_n\wto \phi_\infty$ in $W^{1,1}$ and there exists $\{\psi_n\}_{n\in\NN}\subset W^{1,\infty}(\Omega;\RR^m)$ such that
\[
\left\{
\begin{array}{l}
\psi_n=\phi_\infty\hbox{ on }\partial\Omega  \\
\displaystyle\left\vert\left\{ x\in\Omega: \nabla \phi_n(x)\ne\nabla \psi_n(x)\right\} \right\vert\to 0\\
\displaystyle\left\Vert \dist(\nabla \psi_n,K)\right\Vert_{\infty,\Omega}\to 0.
\end{array}
\right.
\]
\end{lemma}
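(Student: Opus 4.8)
The plan is to first upgrade the hypotheses to genuine weak $W^{1,1}$ convergence with a Lipschitz limit, and then to read off the construction of $\{\psi_n\}$ from M\"uller's sharp truncation theorem.

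First I would record that, with $R:=\sup_{\xi\in K}|\xi|<+\infty$ and $a_n:=\int_\Omega\dist(\nabla\phi_n(x),K)\,dx\to 0$, the elementary bound $\dist(\xi,K)\ge(|\xi|-R)_+$ gives $\int_E|\nabla\phi_n|\,dx\le R|E|+a_n$ for every measurable $E\subset\Omega$; hence $\{\nabla\phi_n\}_n$ is bounded and uniformly integrable in $L^1(\Omega;\MM^{m\times d})$, so by Dunford--Pettis it is relatively weakly compact in $L^1$. Passing to the distributional derivative and using $\phi_n\to\phi_\infty$ in $L^1$, every weak $L^1$-cluster point of $\{\nabla\phi_n\}$ equals $\nabla\phi_\infty$, whence $\phi_n\wto\phi_\infty$ in $W^{1,1}(\Omega;\RR^m)$ --- this is the first assertion. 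Since $\xi\mapsto\dist(\xi,K)$ is convex and continuous ($K$ being convex), $\phi\mapsto\int_\Omega\dist(\nabla\phi,K)\,dx$ is weakly lower semicontinuous on $W^{1,1}$, so $\int_\Omega\dist(\nabla\phi_\infty,K)\,dx\le\liminf_n a_n=0$; thus $\nabla\phi_\infty\in K$ a.e.\ and in particular $\phi_\infty\in W^{1,\infty}(\Omega;\RR^m)$.

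Then I would invoke \cite[Theorem~4]{muller99}. Its hypotheses are now all met: $\Omega$ is a bounded Lipschitz domain, $K$ is compact and convex (hence equal to its quasiconvex hull), $\phi_\infty\in W^{1,\infty}(\Omega;\RR^m)$ has $\nabla\phi_\infty\in K$ a.e., and $\phi_n\to\phi_\infty$ in $L^1$ with $\dist(\nabla\phi_n,K)\to 0$ in $L^1$. The theorem produces $\psi_n\in W^{1,\infty}(\Omega;\RR^m)$ with $\psi_n=\phi_\infty$ on $\partial\Omega$, $|\{x\in\Omega:\nabla\phi_n(x)\ne\nabla\psi_n(x)\}|\to 0$ and $\|\dist(\nabla\psi_n,K)\|_{\infty,\Omega}\to 0$, which is precisely the conclusion sought.

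For the reader who wants the mechanism rather than the citation: one sets $\delta_n:=\sqrt{a_n}\to 0$ and $G_n:=\{\dist(\nabla\phi_n,K)\le\delta_n\}$, so that $|\Omega\setminus G_n|\le a_n/\delta_n\to 0$ while $\nabla\phi_n$ already lies in the $\delta_n$-neighbourhood of $K$ on $G_n$; one then modifies $\phi_n$ only on $\Omega\setminus G_n$, a thin boundary layer, and a set of ``bad'' Whitney cubes of vanishing total measure, gluing via a partition of unity subordinate to a Whitney decomposition by affine maps whose slopes are local averages of $\nabla\phi_n$. On the retained cubes those averages lie within $o(1)$ of $K$, because their overlap with $\Omega\setminus G_n$ is negligible and $\nabla\phi_\infty\in K$, and --- the crucial point --- convexity of $K$ makes $\dist(\cdot,K)$ subadditive under these averages, keeping $\dist(\nabla\psi_n,K)=o(1)$ uniformly. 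The main obstacle is exactly this: keeping the modification set of vanishing measure while simultaneously forcing the new gradient to stay uniformly $o(1)$-close to $K$; this is what separates the sharp statement from Zhang's original one (which only bounds $\nabla\psi_n$ in $L^\infty$), and it is where convexity of $K$ and the identification $\nabla\phi_\infty\in K$ enter essentially.
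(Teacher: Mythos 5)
Your argument is essentially the paper's: the paper proves this lemma simply by citing it as a particular case of M\"uller's Theorem 4 in \cite{muller99} (the sharp refinement of Zhang's truncation theorem), exactly as you do, and your preliminary Dunford--Pettis step establishing $\phi_n\wto\phi_\infty$ in $W^{1,1}$ together with $\nabla\phi_\infty\in K$ a.e.\ is a correct way of supplying the hypotheses/conclusions that the paper leaves implicit in the citation. No gap.
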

We will need the following lemma in the Subsection \ref{approxfin}, see \cite[Lemma 2.8]{oah10} for a proof.
\begin{lemma}\label{voisinage r} Let $r>0$. Let $\rho>0$ be such that $\rho \overline{B}\subset\inte\C$, with $\overline{B}=\{\xi\in\MM^{m\times d}:\vert\xi\vert\le 1\}$. Then
\[
\left\{\xi\in\MM^{m\times d}:\dist(\xi,\overline{\C})\le \rho\frac{r}{2}\right\}\subset (1+r)\inte\C.
\]
\end{lemma}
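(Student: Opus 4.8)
The plan is to reduce the claim to the line segment principle ($\lsp$) by a short Minkowski-sum computation, with no limiting or approximation step. First I would unravel the hypothesis on $\xi$: since $\overline{\C}$ is a nonempty closed set, the condition $\dist(\xi,\overline{\C})\le\rho r/2$ produces an $\eta\in\overline{\C}$ with $|\xi-\eta|\le\rho r/2$, that is, $\xi\in\overline{\C}+\frac{\rho r}{2}\overline{B}$.

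Next I would absorb the ball into $\overline{\C}$ using $\rho\overline{B}\subset\inte\C$. Since $\frac{\rho r}{2}\overline{B}=\frac{r}{2}\bigl(\rho\overline{B}\bigr)\subset\frac{r}{2}\overline{\C}$, we get $\xi\in\overline{\C}+\frac{r}{2}\overline{\C}$, and the elementary identity $aK+bK=(a+b)K$ (valid for any convex $K$ and $a,b\ge 0$, with the nontrivial inclusion coming from convexity) gives $\xi\in\bigl(1+\frac{r}{2}\bigr)\overline{\C}$. Equivalently and more concretely, with $\lambda=\frac{2}{2+r}\in\,]0,1[$ one can write $\xi=\bigl(1+\frac{r}{2}\bigr)\bigl(\lambda\eta+(1-\lambda)\tfrac{2}{r}(\xi-\eta)\bigr)$, where $\eta\in\overline{\C}$ and $\frac{2}{r}(\xi-\eta)\in\rho\overline{B}\subset\overline{\C}$, so the bracketed point lies in $\overline{\C}$ by convexity.

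Finally I would rescale: because $r>0$, the factor $t:=\frac{1+r/2}{1+r}=\frac{2+r}{2(1+r)}$ satisfies $0<t<1$, so $\lsp$ gives $t\,\overline{\C}\subset\inte\C$; hence $\bigl(1+\frac{r}{2}\bigr)\overline{\C}=(1+r)\,t\,\overline{\C}\subset(1+r)\inte\C$, and therefore $\xi\in(1+r)\inte\C$, which is the claim. I do not expect a real obstacle here; the only points that warrant a little care are the identity $aK+bK=(a+b)K$ for convex $K$ (equivalently, the explicit convex-combination rewriting of $\xi$) and checking that the dilation factor $t$ genuinely lies in $[0,1[$ so that $\lsp$ applies.
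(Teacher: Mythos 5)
Your proof is correct. The paper does not give an in-text proof of this lemma (it only cites \cite[Lemma 2.8]{oah10}), so there is nothing to compare against line by line; your route --- writing $\xi\in\overline{\C}+\frac{\rho r}{2}\overline{B}\subset\overline{\C}+\frac{r}{2}\overline{\C}=\bigl(1+\frac{r}{2}\bigr)\overline{\C}$ via convexity and then invoking the line segment principle with $t=\frac{2+r}{2(1+r)}\in\,]0,1[$ --- is the natural argument and is complete, the only implicit points being that the distance to $\overline{\C}$ is attained (it is, since $\overline{\C}$ is compact in the finite-dimensional space $\MM^{m\times d}$) and that $0\in\inte\C$, assumed throughout the paper, is what licenses the l.s.p.
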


\subsubsection{Construction of the nondecreasing sequence}\label{construction} Consider the sequence $\{t_n\}_{n\in\NN^*}$ given by Lemma \ref{decomposition} \ref{decomposition4}. For each $n\in\NN^*$, we set 
\[
\h_n(x,\xi)=\left\{
\begin{array}{ll}
\h(x,\xi)  &   \hbox{ if }\xi\in t_n\overline{\C},\; x\in\RR^d \\
\displaystyle n\left(1+\dist(\xi,\overline{\C})\right) &   \hbox{ if }\xi\notin  t_n\overline{\C},\; x\in\RR^d.
\end{array}
\right.
\]
By (${\rm H}_2$), it holds for every $\xi\in\MM^{m\times d}$ and $n\in\NN^*$
\[
\h_n(x,\xi)\le 
\begin{cases}
\displaystyle \sup_{\xi\in t_n\overline{\C}} \h(x,\xi)&\mbox{ if }\xi\in  t_n\overline{\C},\; x\in\RR^d\\
\displaystyle n(1+\diam(\overline{\C}))(1+\vert\xi\vert)&   \mbox{ if }\xi\notin  t_n\overline{\C},\; x\in\RR^d.
\end{cases}
\]
Set $\alpha_n=\max\left\{\Vert\sup_{\xi\in t_n\overline{\C}}\h(\cdot,\xi)\Vert_{\infty}, n(1+\diam(\overline{\C}))\right\}$ where $\diam(\overline{\C})=\sup\{\vert \xi-\zeta\vert:\xi,\zeta\in\overline{\C}\}$. It follows that $\h_n(x,\xi)\le \alpha_n (1+\vert\xi\vert)$ a.e. in $\RR^d$. 
Therefore, for every $\xi\in\MM^{m\times d}$ and every $n\in\NN^*$
\[
\H\h_n(\xi)\le \alpha_n(1+\vert\xi\vert).
\]
Since $\h(\cdot,\xi)$ is $Y$-periodic for all $\xi\in\MM^{m\times d}$, $\h_n(\cdot,\xi)$ is $Y$-periodic for all $n\in\NN^*$ and all $\xi\in\MM^{m\times d}$.

Let $n\in\NN^*$, we will show that $\h_n\le \h_{n+1}$. By $\lsp$, we have $t_n\overline{\C}\subset t_{n+1}\overline{\C}\subset\inte\C$. Let $\xi\in\MM^{m\times d}$ and $x\in\RR^d$,
\begin{enumerate}[label=$\diamond$, ref=$\diamond$]
\item if $\xi\in t_n\overline{\C}$ then $\h_n(x,\xi)=\h_{n+1}(x,\xi)=\h(x,\xi)$;
\item if $\xi\in t_{n+1}\overline{\C}\setminus t_n\overline{\C}$ then, by Lemma \ref{decomposition} \ref{decomposition5}, we obtain \[
\h_n(x,\xi)=n(1+\dist(\xi,\overline{\C}))=n\le \h(x,\xi)=\h_{n+1}(x,\xi);
\]
\item if $\xi\notin  t_{n+1}\overline{\C}$ then 
\[
\h_n(x,\xi)=n(1+\dist(\xi,\overline{\C}))\le (n+1)(1+\dist(\xi,\overline{\C}))=\h_{n+1}(\xi)\le \h(x,\xi).
\]
\end{enumerate}
Hence $\h_n\le \h_{n+1}\le \h$ and then $\H\h_n\le\H\h_{n+1}\le \H\h$. 

Thus $\{\H\h_n\}_{n\in\NN^*}$ is a nondecreasing sequence satisfying
\begin{equation}\label{eq1 sup-quasiconvex 1}
\H\h_n(\xi)\le {\alpha}_n(1+\vert\xi\vert)\;\hbox{ and }\;\H\h_n(\xi)\le\H\h(\xi)
\end{equation}
for all $\xi\in\MM^{m\times d}$ and $n\in\NN^*$. We set $[\H\h]_\infty=\sup_{n\in\NN^*}\H\h_n$ which is lower semicontinuous since each $\H\h_n$ is continuous ($\H\h_n$ is finite and quasiconvex by Theorem \ref{BraidesTheorem}, then by Lemma \ref{fonseca} together with Lemma \ref{quasifinite} it follows that $\H \h_n$ is continuous).

\subsubsection{Approximation of $\H\h$}\label{approxfin}
Set $[\H\h]_\infty=\sup_{n\in\NN^*}\H\h_n$, then, by \eqref{eq1 sup-quasiconvex 1}, it holds that $[\H\h]_\infty\le \H{\h}$ and since $[\H\h]_\infty$ is quasiconvex
\begin{equation}\label{ineq1}
[\H\h]_\infty\le \Q\H\h\le \overline{\H\h}.
\end{equation}
Now, we will show that $[\H\h]_\infty\ge \overline{\H{\h}}$. Let $\xi\in\MM^{m\times d}$ and $k\in\NN^*$. Without loss of generality, we may assume that $[\H\h]_\infty(\xi)<+\infty$. Note that by a change of variable for every $n\in\NN^*$
\begin{equation}\label{changevariable}
\H\h_n(\xi)=\inf_{s\ge 1}\inf\left\{\int_Y \h_n(sx,\xi+\nabla\phi(x))dx:\phi\in W^{1,\infty}_0(Y;\RR^m)\right\}.
\end{equation}
By \eqref{changevariable}, it holds that for every $n\in\NN^*$ there exist $\phi_n^k\in W^{1,\infty}_0(Y;\RR^m)$ and $s_n^k\in\NN^*$ such that
\begin{align*}
{1\over 2k}+[\H\h]_\infty(\xi)\ge \int_{A_n^k} \h(s_n^kx,\xi+\nabla \phi_n^k(x))dx&+n\int_{Y\setminus A_n^k}\dist(\xi+\nabla \phi_n^k(x),\overline{\C})dx\\&+n\vert Y\setminus A_n^k\vert
\end{align*}
with $A_n^k=\{x\in Y:\xi+\nabla \phi_n^k(x)\in t_n\overline{\C}\}$. We deduce that
\begin{align}
&\lim_{n\to +\infty}\int_Y \dist(\xi+\nabla \phi_n^k(x),\overline{\C})dx=0,\label{convergence dist in L^1 1}\\
&\lim_{n\to +\infty}\vert Y\setminus A_n^k\vert=0.\label{convergence complem. A_n 1}
\end{align}
Since for any $\zeta\in\MM^{m\times d}$ it holds that $\vert\zeta\vert\le {\rm diam}(\overline{\C})+\dist(\zeta,\overline{\C})$, (\ref{convergence dist in L^1 1}) implies that $\{\xi+\nabla \phi_n^k\}_{n\in\NN^*}$ is bounded in $L^1(Y;\MM^{m\times d})$. Using Poincar\'e inequality and compact imbedding of $W^{1,1}(Y;\RR^m)$ in $L^1(Y;\RR^m)$ we deduce that there exists a subsequence (not relabeled) $\{\phi_n^k\}_{n\in\NN^*}$ converging in $L^1$. Applying Lemma \ref{Muller}, we can find a sequence $\{{\psi}_n^k\}_{n\in\NN^*}\subset W^{1,\infty}_0(Y;\RR^m)$ such that
\begin{align}
&\lim_{n\to +\infty}\left\vert\left\{ x\in Y: \nabla \phi_n^k(x)\ne\nabla {\psi}_n^k(x)\right\} \right\vert=0,\hbox{ and }\label{eq2.1 sup-quasiconvex1}\\
&\lim_{n\to +\infty}\Vert \dist(\xi+\nabla {\psi}_n^k,\overline{\C})\Vert_{\infty,Y}=0.\label{eq2.2 sup-quasiconvex1}
\end{align}
Let $\rho>0$ be such that $\rho\overline{B}\subset\inte\C$, where $\overline{B}=\{\xi\in\MM^{m\times d}:\vert\xi\vert\le 1\}$.
By \eqref{eq2.2 sup-quasiconvex1}, there exists $\sigma(k)\in\NN^*$ such that for every $n\ge \sigma(k)$
\[
\Vert \dist(\xi+\nabla\psi_{n}^k,\overline{\C})\Vert_{\infty,Y}\le {\rho\over 2k}.
\]
We construct $\sigma:\NN^*\to\NN^*$ in order to obtain an increasing function of $k$. By Lemma \ref{voisinage r}, we deduce $\xi+\nabla\psi_{n}^k(x)\in (1+{1\over k})\inte\C$ a.e. in $Y$ for all $n\ge \sigma(k)$. 

For each $l\in\NN^*$ we denote by $M_l(\cdot)=\sup\{\h(\cdot,\zeta):\zeta\in t_l\overline{\C}\}\in L^\infty_{\rm loc}(\RR^d)$ which is $Y$-periodic. Since Lemma \ref{decomposition} \ref{decomposition3}, we set 
\[
\overline{M}_l=\int_Y M_l(x)dx\le\Vert M_l\Vert_{\infty, Y}<+\infty.
\]
By \eqref{convergence complem. A_n 1} and \eqref{eq2.1 sup-quasiconvex1}, there exists $\delta(k)\ge 1$ such that for every $n\ge \delta(k)$
\[
\max\left\{\left\vert\left\{ x\in Y: \nabla \phi_{n}^k(x)\ne\nabla \psi_{n}^k(x)\right\} \right\vert,\vert Y\setminus A_{n}^k\vert\right\}\le {1\over 4 k \overline{M}_{\sigma(k)}}.
\]
Now, we take $n\ge \max\{\delta(k),\sigma(k)\}$ then
\[
\max\left\{\vert B_n^k\vert,\vert Y\setminus A_{n}^k\vert\right\}\le {1\over 4 k \overline{M}_{\sigma(k)}}\;\;\;\hbox{ and }\;\;\;\tau_k(\xi+\nabla\psi_{n}^k(x))\in\inte\C\hbox{ a.e. in } Y,
\]
where $B_n^k=\left\vert\left\{ x\in Y: \nabla \phi_{n}^k(x)\ne\nabla \psi_{n}^k(x)\right\} \right\vert$ and $\tau_k=\frac{k}{1+k}$. Set $G_n^k=B_n^k\cup Y\setminus A_n^k$, we have that $\vert G_n^k\vert\le (2k\overline{M}_{\sigma(k)})^{-1}$. Then it holds
\begin{align*}
&\int_Y \h(s_n^kx,t_{\sigma(k)}\tau_k(\xi+\nabla\psi_n^k(x)))dx\\
&= \int_{G_n^k} \h(s_n^kx,t_{\sigma(k)}\tau_k(\xi+\nabla\psi_n^k(x)))dx+\int_{(Y\setminus B_n^k)\cap A_n^k} \h(s_n^kx,t_{\sigma(k)}\tau_k(\xi+\nabla\psi_n^k(x)))dx\\
&\le\vert G_n^k\vert\int_Y\sup_{\zeta\in t_{\sigma(k)}\overline{\C}}\h(s_n^kx,\zeta)dx+\int_{A_n^k}\h(s_n^kx,t_{\sigma(k)}\tau_k(\xi+\nabla\phi_n^k(x)))dx\\
&\le\vert G_n^k\vert \overline{M}_{\sigma(k)}+\int_{A_n^k}\h(s_n^kx,t_{\sigma(k)}\tau_k(\xi+\nabla\phi_n^k(x)))dx\\
&\le {1\over 2k}+\int_{A_n^k}\h(s_n^kx,t_{\sigma(k)}\tau_k(\xi+\nabla\phi_n^k(x)))dx.
\end{align*}
By convexity of the distance function, we deduce from
(\ref{convergence dist in L^1 1}) that $\xi\in\overline{\C}$. The
$\lsp$ implies that $t_{\sigma(k)}\tau_k\xi\in\inte\C$. Using \eqref{changevariable}, we deduce that for every $k\in\NN^*$ and every $n\ge \max\{\delta(k),\sigma(k)\}$
\begin{equation}\label{eq fin}
\H\h(t_{\sigma(k)}\tau_k\xi)\le \frac{1}{2k}+\int_{A_{n}^k}\h(s_n^kx,t_{\sigma(k)}\tau_k(\xi+\nabla\phi_{n}^k(x)))dx.
\end{equation}
with $A_{n}^k=\{x\in Y:\xi+\nabla\phi_{n}^k(x)\in t_{n}\overline{\C}\}$.

Let $s\in\NN^*$. By (${\rm H}_1$), there exists $\eta_s>0$ such that for every $t\in [0,1[$, every $x\in\RR^d$ and every $\zeta\in\inte\C$ if $1-t\le \eta_s$ then $\h(x,t\zeta)\le \h(x,\zeta)+{1\over s}$. There also exists an integer $k_s\ge 1$ such that $1-t_{\sigma(k)}\tau_{k}\le\eta_s$ for all $k\ge k_s$ since $\sigma$ is increasing. Thus, if we take $k\ge k_s$ then for every $n\ge \max\{\sigma(k),\delta(k)\}$
\begin{align*}
\int_{A_{n}^k} \h(s_n^kx,t_{\sigma(k)}\tau_k(\xi+\nabla{\phi}_{n}^k(x)))dx&\le {1\over s}+\int_{A_{n}^k} \h(s_n^kx,\xi+\nabla{\phi}_{n}^k(x))dx\\
&\le {1\over s}+{1\over 2k}+[\H\h]_\infty(\xi).
\end{align*}
Hence, by \eqref{eq fin}
\begin{equation}\label{delta eps}
\H\h(t_{\sigma(k)}\tau_k\xi)\le{1\over s}+{1\over k}+[\H\h]_\infty(\xi).
\end{equation}
On one hand, by \eqref{ineq1}, letting $k\to +\infty$ and then $s\to +\infty$ 
\begin{equation*}
\Q\H\h(\xi)\le\overline{\H\h}(\xi)\le [\H\h]_\infty(\xi)\le \Q\H\h(\xi),
\end{equation*}
which implies 
\begin{equation}\label{equa1}
\overline{\H\h}=\Q\H\h=[\H\h]_\infty.
\end{equation}
On the other hand, by Corollary \ref{exist lim} and Remarks \ref{remarkwide} \ref{radialcontinuity}
\begin{align*}
\Q\H\h(\xi)\le\liminf_{k\to +\infty}\Q\H\h(t_{\sigma(k)}\tau_k\xi)&\le\widehat{\Z\H\h}(\xi)
=\liminf_{k\to +\infty}\widehat{\Z\H\h}(t_{\sigma(k)}\tau_k\xi)\\
&\le\liminf_{k\to +\infty}{\H\h}(t_{\sigma(k)}\tau_k\xi)\\
&\le {1\over s}+[\H\h]_\infty(\xi),
\end{align*}
which implies when $s\to +\infty$
\begin{equation}\label{equa2}
\Q\H\h\le \widehat{\Z\H\h}\le[\H\h]_\infty.
\end{equation}
Combining \eqref{equa1} and \eqref{equa2}
\begin{equation*}
\overline{\H\h}=\Q\H\h=\widehat{\Z\H\h}=[\H\h]_\infty.
\end{equation*}
The representation formula for $\widehat{\Z\H\h}$ is given by Corollary \ref{exist lim}. The proof is complete.\hfill$\square$
\bibliographystyle{alpha}

\end{document}